\DeclareRobustCommand\nlab{r}
\definecolor{ForestGreen}{rgb}{0.1,0.6,0.05}
\definecolor{EgyptBlue}{rgb}{0.063,0.1,0.6}
\definecolor{RipeOlive}{HTML}{556B2F}
\newtheorem{theorem}{Theorem}
\newtheorem{proposition}[theorem]{Proposition}
\newtheorem{lemma}[theorem]{Lemma}
\theoremstyle{definition}
\newtheorem{remark}[theorem]{Remark}
\numberwithin{equation}{section}
\numberwithin{theorem}{section}
\numberwithin{equation}{section}
\numberwithin{theorem}{section}
\newenvironment{proof*}[1]{\begin{trivlist}\item[\hskip%
		\labelsep{{\bf Proof of \/{\rm\bf #1.}}\quad}]\rm}%
	{\hfill\qed\rm\end{trivlist}}
\newcommand{\W}{W_0^{1,p}}
\newcommand{\intO}{\int_\Omega}
\newcommand{\C}{C^1_0(\overline{\Omega})}
\newcommand{\E}{E_{\alpha,\beta}}
\newcommand{\En}{E_{\alpha_n,\beta}}
\title{
	\vspace*{-1cm}
	On the Fredholm-type theorems and sign properties of solutions for $(p,q)$-Laplace equations with two parameters}
\author{ 
	\normalsize Vladimir Bobkov\\ 
	{\small  Department of Mathematics and NTIS, Faculty of Applied Sciences, University of West Bohemia}\\ 
	{\small Univerzitn\'i 8, 306 14 Plze\v{n}, Czech Republic}\\
	{\small e-mail: bobkov@kma.zcu.cz}\\[0.5em] 
	\normalsize Mieko Tanaka\\
	{\small Department of  Mathematics, 
		Tokyo University of Science}\\
	{\small Kagurazaka 1-3, Shinjyuku-ku, Tokyo 162-8601, Japan}\\
	{\small e-mail: miekotanaka@rs.tus.ac.jp} 
}
\date{}
\begin{document}
\maketitle 
	\begin{abstract} 
		We consider the Dirichlet problem for the nonhomogeneous equation $-\Delta_p u -\Delta_q u = \alpha |u|^{p-2}u + \beta |u|^{q-2}u + f(x)$ in a bounded domain, where $p \neq q$, and $\alpha, \beta \in \mathbb{R}$ are parameters.
		We explore assumptions on $\alpha$ and $\beta$ that guarantee the resolvability of the considered problem. Moreover, we introduce several curves on the $(\alpha,\beta)$-plane allocating sets of parameters for which the problem has or does not have positive or sign-changing solutions, provided $f$ is of a constant sign.
			
	\par
	\smallskip
	\noindent {\bf  Keywords}:\ $(p,q)$-Laplacian,\ Fredholm alternative, existence of solutions, positive solutions, maximum principle, linking method.
	
	\par
	\smallskip
	\noindent {\bf  MSC2010}: \  35J62, 35J20, 35P30, 35B50
	\end{abstract}


\section{Introduction and preliminaries}\label{sec:intro}
Consider the boundary value problem
\begin{equation}\label{eq:D}
\tag{$D_{\alpha,\beta,f}$}
\left\{
\begin{aligned}
-\Delta_p u -\Delta_q u &= \alpha |u|^{p-2}u+\beta |u|^{q-2}u + f(x)
&&\text{in}\ \Omega, \\
u&=0 &&\text{on }\ \partial \Omega,
\end{aligned}
\right.
\end{equation}
where $\Delta_r u := \text{div}\left(|\nabla u|^{r-2} \nabla u \right)$ with 
$r=p$ or $r=q$ defines the $r$-Laplace operator, $p,q>1$ and, without loss of generality, $p>q$. 
Parameters $\alpha, \beta$ are real numbers, and $\Omega\subset \mathbb{R}^N$ is a bounded domain with $C^2$-boundary, $N \geq 1$.
The source function $f$ belongs to $W^{-1,p'}(\Omega)$, the dual of the Sobolev space $W_0^{1,p} := W_0^{1,p}(\Omega)$, $p' = \frac{p}{p-1}$. The latter space is endowed with the norm $\| \nabla \left(\cdot\right) \|_p$, where $\|u\|_p := \left(\intO |u|^p \, dx\right)^{1/p}$ defines the norm of $L^p(\Omega)$.

The main ``building block'' of \eqref{eq:D} is the nonlinear eigenvalue problem for the $r$-Laplacian
\begin{equation}\label{eq:Egen}
\tag*{(\theequation)$_{\nlab}$}
\left\{
\begin{aligned}
-\Delta_r u &= \lambda |u|^{r-2}u
&&\text{in}\ \Omega, \\
u&=0 &&\text{on }\ \partial \Omega.
\end{aligned}
\right.
\end{equation}
We say that $\lambda$ is an eigenvalue of the $r$-Laplacian if there exists a nonzero weak solution of \ref{eq:Egen} called an eigenfunction associated with $\lambda$. 
Hereinafter, we denote by $\lambda_1(r)$ and $\varphi_r$ the first eigenvalue and an associated eigenfunction, respectively. It is known (see, e.g., \cite{anane1987,Alleg}) that $\lambda_1(r)$ is simple and isolated, and it can be defined by
\begin{equation}\label{eq:lambdar}
\lambda_1(r) 
= 
\inf\left\{\frac{\|\nabla u\|_r^r}{\|u\|_r^r}:~ u \in W_0^{1,r} \setminus \{0\}
\right\}.
\end{equation}
Moreover, $\varphi_r \in C^{1,\gamma}(\overline{\Omega})$ for some $\gamma \in (0,1)$ (see Remark \ref{rem:reg} below), and we can choose $\varphi_r$ to be strictly positive \cite{puser}.
By analogy with the linear case $r=2$, we will denote the set of all eigenvalues of the $r$-Laplacian as $\sigma(-\Delta_r)$ and the set of all eigenfunctions associated to the parameter $\lambda \in \mathbb{R}$ as $ES(r;\lambda)$. For instance, $ES(r;\lambda_1(r)) \equiv \mathbb{R}\varphi_r$, and $ES(r;\lambda) = \{0\}$ provided $\lambda \not\in \sigma(-\Delta_r)$. 

\smallskip
If we let $p=q$ and $\alpha=\beta$, then, up to scaling, \eqref{eq:D} turns to the problem
\begin{equation}\label{eq:E}
\left\{
\begin{aligned}
-\Delta_p u &= \lambda |u|^{p-2}u + f(x)
&&\text{in}\ \Omega, \\
u&=0 &&\text{on }\ \partial \Omega,
\end{aligned}
\right.
\end{equation}
which had been actively studied in the following two directions:
\begin{itemize}
	\item[1.] \textit{Existence and multiplicity} of solutions. 
	In fact, if $\lambda$ is not an eigenvalue of $-\Delta_p$, then the existence is well-known, see, e.g., \cite[Theorem 3.1, p.\ 60]{FNSS}. Moreover, in the linear case $p=2$ the complete information about the existence and multiplicity is provided via the Fredholm alternative. 
	On the other hand, the situation is drastically different in the nonlinear case $p \neq 2$. 
	It was investigated in \cite{drabek,DGTU,takac2} (see also  references therein), that the nontrivial multiplicity of solutions of \eqref{eq:E} can occur as for $\lambda = \lambda_1(p)$, and for $\lambda$ from a punctured neighborhood of $\lambda_1(p)$, depending on assumptions on $f$. 
	Notice that the investigations of \eqref{eq:E} were carried out mainly in a neighborhood of $\lambda_1(p)$ due to the lack of description of $\sigma(-\Delta_p)$ at higher eigenvalues.
	\item[2.] \textit{Sign properties} of solutions. 
	Suppose that $f \in L^\infty(\Omega) \setminus \{0\}$ and $f \geq 0$ a.e.\ in $\Omega$.
	Then the well-known maximum principle states that any solution of \eqref{eq:E} is positive provided $\lambda < \lambda_1(p)$. Moreover, any solution of \eqref{eq:E} is either nonnegative or sign-changing for $\lambda > \lambda_1(p)$, see, e.g., \cite[Theorem 2.1]{Alleg}. 
	The latter result is strengthened by the anti-maximum principle \cite{CP,FGTT}: there exists $\lambda_f > \lambda_1(p)$ such that any solution of \eqref{eq:E} is negative provided $\lambda \in (\lambda_1(p), \lambda_f)$.
\end{itemize}

The aim of the present paper is to obtain some basic results on the existence of solutions of \eqref{eq:D}, as well as on their sign properties.
As in the case of the problem \eqref{eq:E}, it is clear that the investigation of \eqref{eq:D} should be preceded by the study of the corresponding unperturbed problem 
\begin{equation}\label{eq:D0}
\tag{$D_{\alpha,\beta,0}$}
\left\{
\begin{aligned}
-\Delta_p u -\Delta_q u &= \alpha |u|^{p-2}u+\beta |u|^{q-2}u
&&\text{in}\ \Omega, \\
u&=0 &&\text{on }\ \partial \Omega.
\end{aligned}
\right.
\end{equation}
While the left- and right-hand sides of \eqref{eq:D0} have the same ``homogeneity'', the structure of the solution set of \eqref{eq:D0} strongly depends on the choice of the parameters $\alpha$ and $\beta$, see, e.g., \cite{BobkovTanaka2015,BobkovTanaka2016,BobkovTanaka2017,MT,T-2014}, where the existence, multiplicity, and behavior of solutions of \eqref{eq:D0} have been comprehensively studied. 
In fact, depending on $\alpha$ and $\beta$, \eqref{eq:D0} demonstrates a behavior similar to the one for problems with convex-concave nonlinearities \cite{abc} and indefinite nonlinearities \cite{altar,ilyas}, but with a more essential inclination to eigenvalue problems.

Let us recall several results from \cite{BobkovTanaka2015,BobkovTanaka2017} about the existence and nonexistence of positive solutions of \eqref{eq:D0}, some of which will be used in the subsequent sections. 
In \cite{BobkovTanaka2015}, the following family of critical points was introduced:
\begin{equation}\label{eq:beta_ps}
\beta_{ps}(\alpha):=
\sup_{u \in \text{int}\,\C_+} 
\inf_{\varphi \in C^1_0(\overline{\Omega})_+ \setminus\{0\}}
\mathcal{L}_\alpha(u; \varphi)
\quad 
\text{ for } 
\alpha \geq \lambda_1(p).
\end{equation}
Here $\mathcal{L}_\alpha(u; \varphi)$ is the so-called \textit{extended functional} (see \cite{ilfunc}) defined as
$$
\mathcal{L}_\alpha(u; \varphi) := 
\frac{\intO |\nabla u|^{p-2}\nabla u\nabla\varphi \,dx +
	\intO |\nabla u|^{q-2}\nabla u\nabla\varphi \,dx-
	\alpha\intO |u|^{p-2} u \varphi\,dx}{\intO |u|^{q-2} u \varphi\,dx}, 
$$
and $\text{int}\,\C_+$ is the interior of the positive cone $\C_+$ of $C^1_0(\overline{\Omega})$ given by
$$
\text{int}\,\C_+ := \left\{u \in C^1_0(\overline{\Omega}):~ 
u(x)>0 \text{ for all } x \in \Omega,~
\frac{\partial u}{\partial\nu}(x) < 0 \text{ for all } x \in \partial\Omega \right\},
$$
where $\nu$ is the outward unit normal vector to $\partial\Omega$. 
It was proved in \cite[Theorem 2.2]{BobkovTanaka2015} that $\beta_{ps}(\alpha)$ is the threshold curve on the $(\alpha,\beta)$-plane which separates sets of the existence and nonexistence of positive solutions of \eqref{eq:D0}. 
Namely, if $\alpha > \lambda_1(p)$, then \eqref{eq:D0} has a positive solution when $\beta < \beta_{ps}(\alpha)$, and \eqref{eq:D0} does not have positive solutions when $\beta > \beta_{ps}(\alpha)$. Moreover, if $\alpha < \lambda_1(p)$, then \eqref{eq:D0} possesses a solution if and only if $\beta > \lambda_1(q)$, see \cite[Propositions 1 and 2]{BobkovTanaka2015}. 
Borderline cases were also studied, see, in particular, \cite[Proposition 4]{BobkovTanaka2015} and \cite[Proposition 3]{BobkovTanaka2017}.
As for the properties of the curve $\beta_{ps}(\alpha)$, it is known that $\beta_{ps}(\lambda_1(p)) \geq \beta_*$, $\beta_{ps}(\alpha)$ is nonincreasing, and $\beta_{ps}(\alpha) = \lambda_1(q)$ for all $\alpha \geq \alpha_*$, where
\begin{equation}\label{eq:alpha_*}
\alpha_* := \frac{\|\nabla \varphi_q\|_p^p}{\|\varphi_q\|_p^p},
\quad
\beta_* := \frac{\|\nabla \varphi_p\|_q^q}{\|\varphi_p\|_q^q},
\end{equation}
see \cite[Proposition 3]{BobkovTanaka2015} and \cite[Proposition 3 (ii)]{BobkovTanaka2017}.
Notice that $\alpha_* > \lambda_1(p)$ and $\beta_* > \lambda_1(q)$, since $\lambda_1(p)$ and $\lambda_1(q)$ are simple and have different eigenspaces, see \cite[Proposition 13]{BobkovTanaka2017}.

We also refer the interested reader to the works \cite{averna,chaves,flip,marcomasconi} for existence results and qualitative properties of solutions of other types of problems driven by the $(p,q)$-Laplace operator.


\section{Main results}\label{sec:mainresults}
In this section, we collect our main results. We group them according to the existence of solutions of \eqref{eq:D} and their qualitative properties.

Hereinafter, we will use the notations
\begin{equation*}\label{def:HG}
H_\alpha (u) := \|\nabla u\|_p^p -\alpha\|u\|_p^p 
\quad \text{and}\quad 
G_\beta(u) := \|\nabla u\|_q^q -\beta\|u\|_q^q,
\end{equation*}
and
$$
\langle f,u\rangle := \intO fu\,dx
$$
for the dual pairing between  $f\in W^{-1,p'}(\Omega)$ and $u \in \W$.

\subsection{Existence}
\begin{theorem}\label{thm:existence} 
Let $f\in W^{-1,p'}(\Omega)$. 
Assume that $\alpha,\beta \in \mathbb{R}$ are such that one of the following assumptions is satisfied:
\begin{enumerate}[label={\rm(\roman*)}]
	\item\label{thm:existence:G-} $G_\beta (u)<0$ for all $u\in ES(p;\alpha)\setminus\{0\}$;
	\item\label{thm:existence:G+} $G_\beta (u)>0$ for all $u\in ES(p;\alpha)\setminus\{0\}$.
\end{enumerate}
Then \eqref{eq:D} has at least one solution.
\end{theorem}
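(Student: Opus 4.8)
The plan is to obtain a solution of \eqref{eq:D} via a minimization/critical point argument for the energy functional
$$
E_{\alpha,\beta}(u) := \frac{1}{p}H_\alpha(u) + \frac{1}{q}G_\beta(u) - \langle f, u\rangle
= \frac{1}{p}\|\nabla u\|_p^p - \frac{\alpha}{p}\|u\|_p^p + \frac{1}{q}\|\nabla u\|_q^q - \frac{\beta}{q}\|u\|_q^q - \langle f, u\rangle
$$
on $\W$. This functional is $C^1$ and its critical points are precisely the weak solutions of \eqref{eq:D}. Since the leading term $\frac{1}{p}\|\nabla u\|_p^p$ dominates at infinity only when the quadratic-type corrections do not cancel it, the entire argument hinges on showing that, under \ref{thm:existence:G-} or \ref{thm:existence:G+}, the functional $E_{\alpha,\beta}$ is coercive on $\W$; once coercivity is established, weak lower semicontinuity (which follows from convexity of the gradient terms and compactness of the embeddings $\W \hookrightarrow L^p(\Omega)$, $\W \hookrightarrow L^q(\Omega)$) gives a global minimizer, hence a solution.

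The first and main step is therefore \emph{coercivity}. I would argue by contradiction: suppose $\|\nabla u_n\|_p \to \infty$ while $E_{\alpha,\beta}(u_n)$ stays bounded above, and set $v_n := u_n / \|\nabla u_n\|_p$, so $\|\nabla v_n\|_p = 1$. Up to a subsequence, $v_n \rightharpoonup v$ in $\W$ and $v_n \to v$ strongly in $L^p(\Omega)$ and $L^q(\Omega)$. Dividing $E_{\alpha,\beta}(u_n)$ by $\|\nabla u_n\|_p^p$ and using that $q < p$ (so the $q$-terms and the linear term, divided by $\|\nabla u_n\|_p^p$, vanish in the limit), one gets $\frac{1}{p}\liminf \|\nabla v_n\|_p^p - \frac{\alpha}{p}\|v\|_p^p \le 0$, i.e. $\liminf H_\alpha(v_n) \le 0$; combined with weak lower semicontinuity of $\|\nabla\cdot\|_p^p$ this forces $v_n \to v$ strongly in $\W$, $\|\nabla v\|_p = 1$, and $H_\alpha(v) \le 0$. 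On the other hand, $\lambda_1(p) \le \alpha$ must hold (otherwise $H_\alpha > 0$ on $\W\setminus\{0\}$ by \eqref{eq:lambdar} and coercivity is immediate), and the reverse inequality $H_\alpha(v) \ge 0$ always holds when $\alpha \le \lambda_1(p)$ is false — so one needs $\alpha \ge \lambda_1(p)$ and $H_\alpha(v) = 0$; the characterization of the first eigenvalue then yields $H_\alpha(v) = 0$ with $v \ne 0$, which combined with a convexity/uniqueness argument (Picone-type inequality or the strict convexity in \cite{anane1987,Alleg}) forces either $\alpha = \lambda_1(p)$ and $v \in \mathbb{R}\varphi_p$, or more generally $v \in ES(p;\alpha)\setminus\{0\}$ — here one must also rule out the case $\alpha \in \sigma(-\Delta_p)$ with $\alpha > \lambda_1(p)$ by a separate sign argument on $H_\alpha$, or note that $H_\alpha(v)=0$ together with $\alpha\ge\lambda_1(p)$ already forces $v$ to be a minimizer of the Rayleigh quotient and hence $v \in \mathbb{R}\varphi_p \subseteq ES(p;\alpha)$, which pins down $\alpha = \lambda_1(p)$.

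Once $v \in ES(p;\alpha)\setminus\{0\}$ is extracted, I would return to the next order in the expansion of $E_{\alpha,\beta}(u_n)$: writing $u_n = t_n v_n$ with $t_n = \|\nabla u_n\|_p \to \infty$ and using $H_\alpha(v_n) = o(1)$ more precisely (in fact $t_n^p H_\alpha(v_n)$ need not vanish, so one keeps it), boundedness of $E_{\alpha,\beta}(u_n)$ gives
$$
\frac{t_n^p}{p} H_\alpha(v_n) + \frac{t_n^q}{q} G_\beta(v_n) - t_n \langle f, v_n\rangle \le C.
$$
Since $t_n^p H_\alpha(v_n) \ge 0$ (as $\alpha \le \lambda_1(p)$ would make it strictly positive, but we are in the case $\alpha = \lambda_1(p)$ where $H_\alpha \ge 0$ on all of $\W$), dividing by $t_n^q$ and letting $n \to \infty$ forces $G_\beta(v) \le 0$; but $v \in ES(p;\alpha)\setminus\{0\}$, so assumption \ref{thm:existence:G-} is not contradicted, whereas \ref{thm:existence:G+} is contradicted immediately. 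For case \ref{thm:existence:G-}, one instead needs to use that $t_n^p H_\alpha(v_n)$, though nonnegative, cannot grow fast enough to compensate: a more delicate estimate splitting $v_n = v + w_n$ with $w_n \rightharpoonup 0$ and exploiting that $H_\alpha(v_n) \approx c\|\nabla w_n\|_p^p$ (by the second-order expansion of $H_\alpha$ near the one-dimensional kernel $\mathbb{R}\varphi_p$, since $\lambda_2(p) > \lambda_1(p)$) shows $t_n^p H_\alpha(v_n) \to \infty$ faster than $t_n^q|G_\beta(v_n)|$ unless $w_n \equiv 0$ eventually, i.e. $u_n \in \mathbb{R}\varphi_p$; but on the line $\mathbb{R}\varphi_p$ one has $E_{\alpha,\beta}(t\varphi_p) = \frac{t^q}{q}G_\beta(\varphi_p) - t\langle f,\varphi_p\rangle + \frac{t^p}{p}H_\alpha(\varphi_p)$ with $H_\alpha(\varphi_p)=0$ and $G_\beta(\varphi_p) < 0$ by \ref{thm:existence:G-}, which tends to $-\infty$ — still not a contradiction to boundedness from above, so in this sub-case coercivity genuinely fails and one must instead argue that $E_{\alpha,\beta}$ is bounded below and attains its infimum by a direct (not coercivity-based) compactness argument, or treat \ref{thm:existence:G-} via a linking/mountain-pass geometry rather than global minimization.

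The main obstacle, as the last paragraph indicates, is that \ref{thm:existence:G-} and \ref{thm:existence:G+} likely require genuinely different variational mechanisms: under \ref{thm:existence:G+} the functional should be coercive and the global minimum works directly, while under \ref{thm:existence:G-} the functional is unbounded below along $ES(p;\alpha)$, so one needs a saddle-point (linking) theorem — splitting $\W$ along the finite-dimensional unstable subspace associated with the directions where $H_\alpha \le 0$, verifying the Palais–Smale condition (which again reduces to a coercivity-type estimate on the complement and uses compactness of the lower-order embeddings), and checking the linking geometry using the strict sign of $G_\beta$ on $ES(p;\alpha)$. Getting the Palais–Smale condition and the geometry to match up cleanly — in particular handling the interaction between the $p$-homogeneous and $q$-homogeneous terms when $\alpha$ is exactly an eigenvalue — is where the real work lies.
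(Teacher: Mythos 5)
There is a genuine gap at the core of your plan: the claim that assumption \ref{thm:existence:G+} yields coercivity of $E_{\alpha,\beta}$ is false whenever $\alpha>\lambda_1(p)$. Indeed, $E_{\alpha,\beta}(t\varphi_p)=\frac{t^p}{p}(\lambda_1(p)-\alpha)\|\varphi_p\|_p^p+O(t^q)\to-\infty$ as $t\to+\infty$, regardless of which of \ref{thm:existence:G-}, \ref{thm:existence:G+} holds (note that \ref{thm:existence:G+} can even hold vacuously, e.g.\ when $\alpha\notin\sigma(-\Delta_p)$). So global minimization is available only for $\alpha<\lambda_1(p)$, and for every $\alpha>\lambda_1(p)$ a linking/minimax construction is unavoidable in \emph{both} cases --- your proposed dichotomy ``\ref{thm:existence:G+} $\Rightarrow$ minimize, \ref{thm:existence:G-} $\Rightarrow$ link'' does not match the actual structure. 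A second flaw lies in your blow-up analysis: from $\|\nabla v\|_p^p=\alpha\|v\|_p^p$ you cannot conclude $v\in ES(p;\alpha)$; having Rayleigh quotient equal to $\alpha$ is far from being an eigenfunction, and the level set $\{H_\alpha=0\}$ is huge even when $ES(p;\alpha)=\{0\}$. The correct identification of the normalized limit with an element of $ES(p;\alpha)\setminus\{0\}$ --- which is precisely where the hypotheses enter, via $G_\beta(v_0)\neq 0$ --- requires a Palais--Smale sequence, i.e.\ $E_{\alpha,\beta}'(u_n)\to 0$, not merely boundedness of the energy; this is Lemma \ref{PS} in the paper. Your contradiction argument for coercivity only has energy bounds at its disposal, so it cannot extract an eigenfunction.

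The part you defer as ``the real work'' is exactly where the proof lives, and your sketch does not resolve it. The paper introduces the Dr\'abek--Robinson variational eigenvalues $\lambda_k(p)$ and uses the linking Lemma \ref{lem:link} to set up a minimax level $c$ whenever $\lambda_k(p)<\alpha<\lambda_{k+1}(p)$; the Palais--Smale condition (valid under either hypothesis by Lemma \ref{PS}) then yields a critical point. The genuinely delicate situation is resonance $\alpha=\lambda_k(p)$, where the linking geometry degenerates. There the paper approximates $\alpha$ monotonically by $\alpha_n$ --- \emph{decreasing} under \ref{thm:existence:G-}, \emph{increasing} under \ref{thm:existence:G+} --- produces almost-critical points $u_n$ at levels $c_n$ by a deformation argument, and proves $\{c_n\}$ bounded below (automatic under \ref{thm:existence:G-}) or above (via an explicit family of admissible maps $h_n$ under \ref{thm:existence:G+}); the sign of $G_\beta$ on $ES(p;\alpha)$ then rules out blow-up of $\{u_n\}$ through the identity for $pE_{\alpha_n,\beta}(u_n)-\langle E_{\alpha_n,\beta}'(u_n),u_n\rangle$. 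Your observation that the sign of $G_\beta$ on the blow-up limit is what the hypotheses forbid is the right germ, but none of the machinery that makes it work --- the choice of direction of approximation, the uniform control of the minimax levels, the construction of the maps $h_n$ --- appears in the proposal.
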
 

In order to quantify the assumptions \ref{thm:existence:G-} and \ref{thm:existence:G+} of Theorem \ref{thm:existence}, 
we introduce the following two families of critical values: 
\begin{align}
\underline{\beta}(\alpha) 
&:=
\inf\left\{\frac{\|\nabla u\|_q^q}{\|u\|_q^q}:
~ u \in ES(p;\alpha)\setminus\{0\}\right\}, 
\label{def:beta2}\\
\bar{\beta}(\alpha)  
&:=
\sup\left\{\frac{\|\nabla u\|_q^q}{\|u\|_q^q}:
~ u \in ES(p;\alpha)\setminus\{0\}\right\}, 
\quad \alpha \in \mathbb{R},
\label{def:beta1}
\end{align} 
and set $\underline{\beta}(\alpha)=+\infty$ and $\bar{\beta}(\alpha)=-\infty$ for $\alpha\not\in\sigma(-\Delta_p)$. 
Note that $\underline{\beta}(\alpha), \bar{\beta}(\alpha) \in [\lambda_1(q),+\infty)$ provided $\alpha\in\sigma(-\Delta_p)$, see \cite[Lemma 3.6]{BobkovTanaka2016}, and the simplicity of $\lambda_1(p)$ and $\lambda_1(q)$ yields
\begin{equation}\label{def:beta_*}
\underline{\beta}(\lambda_1(p)) = \bar{\beta}(\lambda_1(p)) = \beta_* > \lambda_1(q), 
\end{equation}
where $\beta_*$ is defined in \eqref{eq:alpha_*} and the inequality follows from \cite[Proposition 13]{BobkovTanaka2017}.
Then Theorem \ref{thm:existence} can be reformulated as follows.
\begin{theorem}\label{cor:existence} 
Let $f\in W^{-1,p'}(\Omega)$. Assume that $\alpha,\beta \in \mathbb{R}$ are such that either  $\beta<\underline{\beta}(\alpha)$ or $\beta>\bar{\beta}(\alpha)$. Then \eqref{eq:D} has at least one solution.
\end{theorem}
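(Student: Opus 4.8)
The plan is to read Theorem~\ref{cor:existence} as a purely quantitative restatement of Theorem~\ref{thm:existence}: it suffices to check that each of the two hypotheses $\beta<\underline{\beta}(\alpha)$ and $\beta>\bar{\beta}(\alpha)$ forces, respectively, assumption \ref{thm:existence:G+} or \ref{thm:existence:G-} of Theorem~\ref{thm:existence}, after which the conclusion is immediate.

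First I would dispose of the case $\alpha\notin\sigma(-\Delta_p)$. Here $ES(p;\alpha)\setminus\{0\}=\emptyset$, so both \ref{thm:existence:G-} and \ref{thm:existence:G+} hold vacuously; consistently, $\underline{\beta}(\alpha)=+\infty$ by convention, so $\beta<\underline{\beta}(\alpha)$ is automatic. Thus Theorem~\ref{thm:existence} already yields a solution of \eqref{eq:D}.

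Next, let $\alpha\in\sigma(-\Delta_p)$, so that $ES(p;\alpha)\setminus\{0\}\neq\emptyset$ and $\|u\|_q>0$ for each $u$ in this set (an eigenfunction lies in $W_0^{1,p}\hookrightarrow L^q$ and is nonzero). For such $u$ one has the elementary equivalences $G_\beta(u)>0\iff\|\nabla u\|_q^q/\|u\|_q^q>\beta$ and $G_\beta(u)<0\iff\|\nabla u\|_q^q/\|u\|_q^q<\beta$. If $\beta<\underline{\beta}(\alpha)$, then by the definition \eqref{def:beta2} we have $\|\nabla u\|_q^q/\|u\|_q^q\ge\underline{\beta}(\alpha)>\beta$ for every $u\in ES(p;\alpha)\setminus\{0\}$, hence $G_\beta(u)>0$ there, which is assumption \ref{thm:existence:G+}. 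Symmetrically, if $\beta>\bar{\beta}(\alpha)$, then \eqref{def:beta1} gives $\|\nabla u\|_q^q/\|u\|_q^q\le\bar{\beta}(\alpha)<\beta$ for all such $u$, so $G_\beta(u)<0$, which is assumption \ref{thm:existence:G-}. In either case Theorem~\ref{thm:existence} applies and produces a solution of \eqref{eq:D}.

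I do not expect any genuine obstacle: all the analytic content sits in Theorem~\ref{thm:existence}, and the present statement is a bookkeeping reformulation. The only point worth a remark is that the implications above are in fact equivalences --- so that the two bands $\beta<\underline{\beta}(\alpha)$ and $\beta>\bar{\beta}(\alpha)$ delimit \emph{exactly} the region covered by Theorem~\ref{thm:existence} --- provided the extremal ratios defining $\underline{\beta}(\alpha)$ and $\bar{\beta}(\alpha)$ are attained; this attainment follows from compactness of the set of $W_0^{1,p}$-normalized eigenfunctions associated with $\alpha$ (combine the compact embedding $W_0^{1,p}\hookrightarrow L^p$ with passage to the limit in the eigenvalue equation), but it is not needed for the one-directional implication actually asserted in Theorem~\ref{cor:existence}.
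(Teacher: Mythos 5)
Your proposal is correct and coincides with the paper's own treatment: the paper offers no separate proof of Theorem~\ref{cor:existence}, presenting it simply as a reformulation of Theorem~\ref{thm:existence}, and your verification that $\beta<\underline{\beta}(\alpha)$ forces assumption \ref{thm:existence:G+} while $\beta>\bar{\beta}(\alpha)$ forces assumption \ref{thm:existence:G-} (with the non-eigenvalue case handled vacuously via the conventions $\underline{\beta}(\alpha)=+\infty$, $\bar{\beta}(\alpha)=-\infty$) is exactly the intended argument. The closing remark on attainment of the extremal ratios is a harmless aside not needed for the stated implication.
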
 

The equalities in \eqref{def:beta_*} show that if $\alpha=\lambda_1(p)$, then Theorem \ref{thm:existence} does not provide the existence for \eqref{eq:D} only when $\beta=\beta_*$. We have the following result in this case.
\begin{theorem}\label{thm:existence2}
	Let $\partial \Omega$ be connected if $N \geq 2$.
	Assume that $p > 2q$, $\alpha = \lambda_1(p)$ and $\beta = \beta_*$.  
	If $f\in L^2(\Omega)$ is such that $\intO f \varphi_p\, dx = 0$, then \eqref{eq:D} has at least one solution.
\end{theorem}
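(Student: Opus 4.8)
The plan is to obtain a solution of \eqref{eq:D} as a global minimizer of the energy functional $\E(u):=\tfrac1p H_{\lambda_1(p)}(u)+\tfrac1q G_{\beta_*}(u)-\langle f,u\rangle$ on $\W$ (with $\alpha=\lambda_1(p)$, $\beta=\beta_*$), whose critical points are exactly the weak solutions. The obstruction is that, under the present hypotheses, $\E$ is \emph{not} coercive: since $H_{\lambda_1(p)}(\varphi_p)=0$, $G_{\beta_*}(\varphi_p)=0$ (by the definition of $\beta_*$ in \eqref{eq:alpha_*}), and $\langle f,\varphi_p\rangle=\intO f\varphi_p\,dx=0$ (by assumption), the $p$- and $q$-homogeneity of $H$ and $G$ give $\E(t\varphi_p)=0$ for all $t\in\mathbb R$, so $\E$ degenerates along the whole line $\mathbb R\varphi_p$. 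To handle this, I would split $\W=\mathbb R\varphi_p\oplus V$ with $V:=\{v\in\W:\ \intO|\varphi_p|^{p-2}\varphi_p v\,dx=0\}$, write each $u=t\varphi_p+v$ with $v\in V$, and use that $\langle f,u\rangle=\langle f,v\rangle$ because $\langle f,\varphi_p\rangle=0$.

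\smallskip\noindent\textbf{Reduction to the line $\mathbb R\varphi_p$.} Using the simplicity of $\lambda_1(p)$ together with the compact embedding $\W\hookrightarrow L^p(\Omega)$, one shows that $\inf\{\|\nabla v\|_p^p/\|v\|_p^p:\ v\in V\setminus\{0\}\}>\lambda_1(p)$; by $p$-homogeneity this upgrades to: for every $M>0$ there is $c_M>0$ with $H_{\lambda_1(p)}(t\varphi_p+v)\ge c_M\|\nabla v\|_p^p$ whenever $|t|\le M\|\nabla v\|_p$. Combined with the elementary bound $G_{\beta_*}(t\varphi_p+v)\ge(\lambda_1(q)-\beta_*)\|t\varphi_p+v\|_q^q\ge -C(|t|^q+\|\nabla v\|_p^q)$ and $|\langle f,v\rangle|\le C\|\nabla v\|_p$, this makes $v\mapsto\E(t\varphi_p+v)$ coercive on $V$ for each fixed $t$, uniformly for $t$ in bounded sets; being also weakly lower semicontinuous (convex principal parts, weakly continuous remainder by compactness), it attains its infimum at some $v_t\in V$. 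Set $m(t):=\E(t\varphi_p+v_t)=\min_{v\in V}\E(t\varphi_p+v)$. A standard argument using the uniform coercivity shows $m$ is continuous, and clearly $m(t)\le\E(t\varphi_p)=0$ and $\inf_{\W}\E=\inf_{t\in\mathbb R}m(t)$. Moreover, if $f\not\equiv0$ then $m(0)<0$: otherwise $v=0$ minimizes $\E|_V$, which forces $\langle f,v\rangle=0$ for all $v\in V$ and hence, with $\langle f,\varphi_p\rangle=0$, $f\equiv0$; and if $f\equiv0$ then $u=0$ already solves \eqref{eq:D}.

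\smallskip\noindent\textbf{The crux: $m(t)\to0$ as $|t|\to\infty$.} This is the step where the hypotheses $p>2q$ and (for $N\ge2$) connectedness of $\partial\Omega$ enter, and I expect it to be the main obstacle. I would invoke the quantitative coercivity estimate for $H_{\lambda_1(p)}$ transversal to $\mathbb R\varphi_p$ coming from the analysis of the Fredholm alternative for $-\Delta_p$ at $\lambda_1(p)$: since $p>2q>2$, there are $c_1>0$ and $M_0$ such that $H_{\lambda_1(p)}(t\varphi_p+v)\ge c_1|t|^{p-2}\|\nabla v\|_p^2$ for $v\in V$ with $\|\nabla v\|_p\le|t|/M_0$ — here $p\ge2$ (and, in the sharp form, the $L^2$-framework for $f$ and the connectedness of $\partial\Omega$) is essential. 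Coupling it with a Taylor expansion of the $q$-homogeneous $G_{\beta_*}$ around $\varphi_p$, which by $G_{\beta_*}(\varphi_p)=0$ gives $G_{\beta_*}(t\varphi_p+v)\ge -C\bigl(|t|^{q-1}\|\nabla v\|_p+|t|^{q-2}\|\nabla v\|_p^2+\|\nabla v\|_p^q\bigr)$ in the regime $\|\nabla v\|_p\lesssim|t|$, and with $|\langle f,v\rangle|\le C\|\nabla v\|_p$, one first uses $\E(t\varphi_p+v_t)\le0$ to bootstrap that $\rho_t:=\|\nabla v_t\|_p\le C|t|^{q+1-p}$ (finite because $p>q+1$, a consequence of $p>2q$), and then, discarding the nonnegative term $\tfrac1p H_{\lambda_1(p)}(t\varphi_p+v_t)$,
\[
m(t)\ \ge\ \tfrac1q G_{\beta_*}(t\varphi_p+v_t)-\langle f,v_t\rangle\ \ge\ -C\bigl(|t|^{q-1}\rho_t+|t|^{q-2}\rho_t^2+\rho_t^q+\rho_t\bigr)\ \ge\ -\bigl(C+o(1)\bigr)|t|^{2q-p}\ \to\ 0,
\]
since $2q-p<0$; this single inequality is exactly where $p>2q$ is used.

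\smallskip\noindent\textbf{Conclusion.} If $f\equiv0$, then $u=0$ solves \eqref{eq:D}. If $f\not\equiv0$, then $m$ is continuous, $m\le0$, $m(t)\to0$ as $|t|\to\infty$, and $\inf_{\mathbb R}m\le m(0)<0$; hence the infimum $\inf_{\mathbb R}m=\inf_{\W}\E$ is attained at some finite $t^*$. Then $u^*:=t^*\varphi_p+v_{t^*}$ satisfies $\E(u^*)=m(t^*)=\inf_{\W}\E$, so it is a global minimizer of $\E$, hence a critical point, hence a weak solution of \eqref{eq:D}. The only genuinely delicate ingredient is the transversal estimate for $H_{\lambda_1(p)}$ with the correct $|t|^{p-2}$ homogeneity; everything else is a bookkeeping of exponents in which $p>2q$ intervenes precisely once, to force $|t|^{2q-p}\to0$.
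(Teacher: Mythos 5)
Your overall strategy coincides with the paper's: obtain the solution as a global minimizer of $E_{\lambda_1(p),\beta_*}$, decompose along $\mathbb{R}\varphi_p$, and control the degenerate direction through a quantitative coercivity estimate for $H_{\lambda_1(p)}$ transversal to the first eigenspace. The gap sits exactly at the step you yourself flag as the crux. The transversal estimate you invoke, $H_{\lambda_1(p)}(t\varphi_p+v)\ge c_1|t|^{p-2}\|\nabla v\|_p^2$, is not available in this unweighted form. The improved Poincar\'e inequality of \cite{takac} (which is stated for the $L^2$-orthogonal decomposition $\intO v\,\varphi_p\,dx=0$, not for your $|\varphi_p|^{p-2}\varphi_p$-weighted complement) gives $H_{\lambda_1(p)}(t\varphi_p+v)\ge C|t|^{p-2}\intO|\nabla\varphi_p|^{p-2}|\nabla v|^2\,dx+C\|\nabla v\|_p^p$, and for $p>2$ the weighted integral $\intO|\nabla\varphi_p|^{p-2}|\nabla v|^2\,dx$ cannot be bounded below by $c\,\|\nabla v\|_p^2$: the weight $|\nabla\varphi_p|^{p-2}$ vanishes at interior critical points of $\varphi_p$, and concentrating $v$ near such a point destroys the inequality. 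Consequently your bootstrap $\rho_t\le C|t|^{q+1-p}$ and the final bound $m(t)\ge-C|t|^{2q-p}$ do not follow as written.

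The real work, which the proposal skips, is to show that the negative part of $G_{\beta_*}$ is controlled by the \emph{same} degenerate weighted quantity, so that it can be absorbed into the $H$-term by Young's inequality. This is precisely the content of the paper's Lemma \ref{lem:estimate:G}: when $G_{\beta_*}(u)<0$ one has $|G_{\beta_*}(u)|\le C|\gamma_u|^{q-1}\bigl(\intO|\nabla\varphi_p|^{p-2}|\nabla u^\perp|^2\,dx\bigr)^{1/2}+\dots$, and its proof hinges on the finiteness of $\intO|\nabla\varphi_p|^{2q-p}\,dx$ for $p>2q$ --- this is where the hypothesis $p>2q$ and the connectedness of $\partial\Omega$ actually enter, a point your proposal never pins down beyond a parenthetical remark. (Your Taylor bound for $G_{\beta_*}$ containing the second-order term $|t|^{q-2}\|\nabla v\|_p^2$ is also suspect when $q<2$; the paper uses only a first-order mean value expansion.) With the weighted quantities in place your exponent bookkeeping could likely be repaired along the lines of the paper's three-case analysis, but as written the argument rests on an inequality that is false in general.
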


Recalling $p>q$, let us remark that the term $-\Delta_q u - \beta |u|^{q-2} u$ in the problem \eqref{eq:D} can be considered as a specific case of a lower order perturbation of the $p$-Laplacian, and some results stated in this section have to be typical for a more general settings, too. 
In particular, the $(p,q)$-Laplacian can be seen as a $(p-1)$-quasihomogeneous operator in the sense of \cite[Definition 2.1, p.~58]{FNSS}, and hence \cite[Theorem 3.2, p.~73]{FNSS} implies the existence of a solution of \eqref{eq:D} whenever $f \in W^{-1,p'}(\Omega)$ and $\alpha \not\in \sigma(-\Delta_p)$. Thus, our Theorems \ref{thm:existence} and \ref{thm:existence2} provide an improvement of this existence result.

\subsection{Sign properties}\label{subsec:sign}

Let us start with the following two remarks about a regularity of solutions of \eqref{eq:D} and further properties of nonnegative solutions of \eqref{eq:D}.
\begin{remark}\label{rem:reg}
	Assume that $\{\alpha_n\}$ and $\{\beta_n\}$ are bounded, $\{f_n\} \subset L^\infty(\Omega)$ is such that $\{\|f_n\|_\infty\}$ is bounded, and $\{c_n\}$ is nonnegative and bounded. 
	If $u_n$ is a (weak) solution of 
	\begin{equation*}
	\left\{
	\begin{aligned}
	-\Delta_p u - c_n \Delta_q u &= \alpha_n |u|^{p-2}u + \beta_n |u|^{q-2}u + f_n(x)
	&&\text{in}\ \Omega, \\
	u&=0 &&\text{on }\ \partial \Omega,
	\end{aligned}
	\right.
	\end{equation*}
	and $\{u_n\}$ is bounded in $\W$, then there exist $\gamma \in (0,1)$ and $M > 0$ independent of $n \in \mathbb{N}$ such that $u_n\in C^{1,\gamma}(\overline{\Omega})$ and $\|u_n\|_{C^{1,\gamma}(\overline{\Omega})} \leq M$ 
	for all $n$.
	Indeed, thanks to the boundedness assumptions, Moser's iteration process (see, e.g., \cite[Appendix C]{MMT}) implies that $\{u_n\}$ is bounded in $L^\infty(\Omega)$, and then regularity results \cite{Lieberman} and \cite[Theorem 1.7]{L} yield the boundedness of $\{u_n\}$ in  $C^{1,\gamma}(\overline{\Omega})$. 
	In particular, if $f\in L^\infty(\Omega)$, then any solution of \eqref{eq:D} belongs to $C^{1,\gamma}(\overline{\Omega})$ for some $\gamma \in (0,1)$. 
	The same regularity holds true for any eigenfunction of the $r$-Laplacian, $r>1$. 
\end{remark}

\begin{remark}\label{ref:sign}
	If $f \in L^\infty(\Omega)$, $f \geq 0$ a.e.\ in $\Omega$, and $u \geq 0$ is a nonzero nonnegative solution of \eqref{eq:D}, then $u > 0$ in view of the maximum principle \cite[Theorem 5.4.1]{puser}. Moreover, Remark \ref{rem:reg} and the boundary point lemma \cite[Theorem 5.5.1]{puser} yield $u \in \text{int}\,\C_+$. Analogously, $\varphi_r \in \text{int}\,\C_+$, where $\varphi_r$ is the first (nonnegative) eigenfunction of the $r$-Laplacian, $r>1$.
\end{remark}

Assuming $f \in W^{-1,p'}(\Omega)$ and $f \geq 0$ in the weak sense, we introduce the following family of critical values:
\begin{equation}
\beta_f(\alpha) := \inf \left\{
\Phi_\alpha^+(u):~ u\in\mathcal{B}^+(\alpha) \right\},
\quad
\alpha \in \mathbb{R},
\label{beta_f} 
\end{equation}
where
\begin{align} 
\Phi_\alpha^+(u) 
&:=\frac{\|\nabla u\|_q^q}{\|u\|_q^q} + \frac{p-1}{p-q}\left(\frac{p-q}{q-1}\right)^\frac{q-1}{p-1}\frac{\left(H_\alpha(u)\right)^\frac{q-1}{p-1}
\langle f,u\rangle^\frac{p-q}{p-1}}{\|u\|_q^q}, 
\label{def:Phi} \\ 
\mathcal{B}^+(\alpha) &:=\left\{u \in \W \setminus \{0\}:~ 
u \geq 0 \text{ a.e.\ in } \Omega \text{ and } H_\alpha(u) \geq 0\right\}. 
\label{def:B^+} 
\end{align}
Clearly, $\beta_f(\alpha) \in [\lambda_1(q),+\infty)$ for any $\alpha \in \mathbb{R}$. 
In Lemma \ref{lem:prop:beta_f} below we study some other properties of $\beta_f$. In particular, we show that $\beta_f(\alpha) > \lambda_1(q)$ if and only if $\alpha < \alpha_*$, provided $\langle f, \varphi_q \rangle > 0$, where $\alpha_*$ is defined in \eqref{eq:alpha_*}.

\begin{theorem}\label{thm:nodal}
	Let $f \in W^{-1,p'}(\Omega) \setminus \{0\}$ and $f \geq 0$ in the weak sense. 
	If $\alpha\le\lambda_1(p)$ and $\beta < \beta_f(\alpha)$, then 
	any solution of \eqref{eq:D} is nonnegative. 
	If, in addition, $f \in L^\infty(\Omega)$, then 
	any solution of \eqref{eq:D} belongs to $\mathrm{int}\,\C_+$.
\end{theorem}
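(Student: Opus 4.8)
The plan is to argue by contradiction, using the negative part of a solution as a test function. Let $u \in \W$ be a solution of \eqref{eq:D} and write $u^{-} := \max\{-u,0\} \in \W$. Testing the weak formulation of \eqref{eq:D} with $u^{-}$, and using that $\nabla u^{-} = -\nabla u$ on $\{u < 0\}$ while $\nabla u^{-} = 0$ and $u^{-} = 0$ on $\{u \geq 0\}$, all the $p$- and $q$-terms reduce to integrals over $\{u < 0\}$, and after multiplying by $-1$ one obtains the identity
\[ \|\nabla u^{-}\|_p^p + \|\nabla u^{-}\|_q^q = \alpha\,\|u^{-}\|_p^p + \beta\,\|u^{-}\|_q^q - \langle f, u^{-}\rangle, \]
which, in the notation $H_\alpha$, rewrites as $\beta\,\|u^{-}\|_q^q = \|\nabla u^{-}\|_q^q + H_\alpha(u^{-}) + \langle f, u^{-}\rangle$.

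Suppose, for contradiction, that $u^{-} \not\equiv 0$, so $\|u^{-}\|_q > 0$. Since $\alpha \leq \lambda_1(p)$, the characterization \eqref{eq:lambdar} gives $H_\alpha(u^{-}) \geq \|\nabla u^{-}\|_p^p - \lambda_1(p)\,\|u^{-}\|_p^p \geq 0$, and together with $u^{-} \geq 0$ a.e.\ in $\Omega$ this shows $u^{-} \in \mathcal{B}^+(\alpha)$; moreover $\langle f, u^{-}\rangle \geq 0$ because $f \geq 0$ in the weak sense and $u^{-} \geq 0$. The decisive point is that the nonnegative numbers $A := H_\alpha(u^{-})$ and $B := \langle f, u^{-}\rangle$ obey the weighted arithmetic--geometric mean inequality with weights $\frac{q-1}{p-1}$ and $\frac{p-q}{p-1}$, which lie in $(0,1)$ and sum to $1$ as $p > q > 1$: applying it to $x = \frac{p-1}{q-1}A$ and $y = \frac{p-1}{p-q}B$ yields
\[ A + B \;\geq\; \frac{p-1}{p-q}\left(\frac{p-q}{q-1}\right)^{\frac{q-1}{p-1}} A^{\frac{q-1}{p-1}}\, B^{\frac{p-q}{p-1}}, \]
with exactly the constant occurring in the definition \eqref{def:Phi} of $\Phi_\alpha^+$. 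Dividing the rewritten identity by $\|u^{-}\|_q^q$ and inserting this bound gives $\beta \geq \Phi_\alpha^+(u^{-}) \geq \beta_f(\alpha)$, contradicting $\beta < \beta_f(\alpha)$. Hence $u^{-} \equiv 0$, i.e.\ $u \geq 0$ a.e.\ in $\Omega$.

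For the last assertion, if in addition $f \in L^\infty(\Omega)$, then $f \geq 0$ a.e.\ in $\Omega$ and, as $f \neq 0$, the trivial function is not a solution, so any solution $u$ of \eqref{eq:D} is a nonzero nonnegative solution; then Remark \ref{rem:reg} gives $u \in C^{1,\gamma}(\overline{\Omega})$, and Remark \ref{ref:sign} (the strong maximum principle together with the boundary point lemma) upgrades this to $u \in \mathrm{int}\,\C_+$. I expect no genuine obstacle: this is the familiar ``test against $u^{-}$'' argument, and the only delicate step is the bookkeeping identifying the sharp constant in the weighted AM--GM inequality with the one built into $\Phi_\alpha^+$ --- expanding $\left(\frac{p-1}{q-1}\right)^{\frac{q-1}{p-1}}\left(\frac{p-1}{p-q}\right)^{\frac{p-q}{p-1}}$ and collecting powers of $p-1$, $q-1$ and $p-q$ makes it transparent. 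One should also record the degenerate cases $A = 0$ or $B = 0$ separately, but there the displayed inequality is trivial and the conclusion $\beta \geq \Phi_\alpha^+(u^{-})$ still holds.
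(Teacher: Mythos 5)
Your proof is correct, and while it starts from the same point as the paper's --- testing the weak formulation against the negative part and reducing everything to the identity $H_\alpha(u^-)+G_\beta(u^-)+\langle f,u^-\rangle=0$ for the nonnegative function $u^-$ --- the way you extract the contradiction is genuinely different. The paper routes this through Proposition \ref{prop:aux}, whose proof is a fibering argument: one studies $Q_\beta(t)=t^pH_\alpha+t^qG_\beta+t\langle f,\cdot\rangle$, locates a parameter $\tilde\beta\le\beta$ for which $Q_{\tilde\beta}$ has a double zero, solves the resulting system for $\tilde\beta$, and recognizes $\tilde\beta=\Phi_\alpha^+(u^-)\ge\beta_f(\alpha)$. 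Because that proposition requires $H_\alpha>0$ strictly, the paper must then treat the resonant case $\alpha=\lambda_1(p)$ with $-u^-\in\mathbb{R}\varphi_p$ separately, using $\beta_f(\lambda_1(p))\le\beta_*$ so that $G_\beta(\varphi_p)>0$. You instead apply the weighted AM--GM inequality with weights $\frac{q-1}{p-1}$ and $\frac{p-q}{p-1}$ directly to $H_\alpha(u^-)+\langle f,u^-\rangle$; the normalization $\theta_1^{-\theta_1}\theta_2^{-\theta_2}$ is exactly the constant $\frac{p-1}{p-q}\bigl(\frac{p-q}{q-1}\bigr)^{\frac{q-1}{p-1}}$ appearing in \eqref{def:Phi} (I checked the exponent bookkeeping: $\frac{q-1}{p-1}-1=-\frac{p-q}{p-1}$ makes the two expressions agree), so the identity immediately yields $\beta\ge\Phi_\alpha^+(u^-)\ge\beta_f(\alpha)$, contradicting $\beta<\beta_f(\alpha)$. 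This is not a coincidence: the double-root condition in the paper's fibering computation is precisely the equality case of your AM--GM inequality. What your route buys is economy and uniformity --- it bypasses Proposition \ref{prop:aux} entirely and absorbs the degenerate case $H_\alpha(u^-)=0$ (hence the resonant case $\alpha=\lambda_1(p)$) without a separate argument, since $u^-$ still lies in $\mathcal{B}^+(\alpha)$ and the geometric-mean term simply vanishes. What the paper's formulation buys is reusability: Proposition \ref{prop:aux} is stated as a strict inequality for arbitrary nonnegative $u$ with $H_\alpha(u)>0$ and is invoked again in the proof of Proposition \ref{prop:aux5}, where the test object is not known a priori to make $Q_\beta(1)$ vanish exactly. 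Your concluding step for the regularity and positivity assertion matches the paper's (Remarks \ref{rem:reg} and \ref{ref:sign}).
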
 

The result of Theorem \ref{thm:nodal} can be complemented by the following dichotomy. 
\begin{proposition}\label{prop:nodal2}
	Let $f \in L^\infty(\Omega) \setminus \{0\}$ and $f \geq 0$ a.e.\ in $\Omega$. 
	Then for every $\beta < \beta_f(\lambda_1(p))$ there exists $\delta(\beta)>0$ such that for all $\alpha \in (\lambda_1(p), \lambda_1(p) + \delta(\beta))$ any solution of \eqref{eq:D} is either positive or negative. In particular, \eqref{eq:D} has no sign-changing solutions. 
\end{proposition}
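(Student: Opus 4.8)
The plan is to argue by contradiction, combining the uniform $C^{1,\gamma}$-regularity of Remark~\ref{rem:reg} with Theorem~\ref{thm:nodal} applied at the endpoint $\alpha=\lambda_1(p)$. Fix $\beta<\beta_f(\lambda_1(p))$ and suppose no admissible $\delta(\beta)>0$ exists. Then there are sequences $\alpha_n\downarrow\lambda_1(p)$ and solutions $u_n$ of $(D_{\alpha_n,\beta,f})$ such that each $u_n$ is neither positive nor negative. I will show that, along a subsequence, a suitable normalization of $u_n$ converges in $C^1(\overline{\Omega})$ to an element of $\text{int}\,\C_+\cup(-\text{int}\,\C_+)$; since this set is open in $\C$, the approximating functions lie in it for large $n$, forcing $u_n$ to be of constant sign — a contradiction.

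To accommodate a possible blow-up of $\|\nabla u_n\|_p$, set $t_n:=\max\{1,\|\nabla u_n\|_p\}$ and $v_n:=u_n/t_n$. Dividing $(D_{\alpha_n,\beta,f})$ by $t_n^{p-1}$ shows that $v_n\in\W$ solves
\[
-\Delta_p v_n - c_n\Delta_q v_n=\alpha_n|v_n|^{p-2}v_n+\beta c_n|v_n|^{q-2}v_n+f_n\quad\text{in }\Omega,
\]
with $c_n:=t_n^{q-p}\in(0,1]$ and $f_n:=t_n^{-(p-1)}f$, so that $\{\alpha_n\}$, $\{\beta c_n\}$, $\{c_n\}$ are bounded and $\{f_n\}\subset L^\infty(\Omega)$ has $\{\|f_n\|_\infty\}$ bounded. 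Since $\|v_n\|_{\W}=\|\nabla v_n\|_p\le 1$, Remark~\ref{rem:reg} provides $\gamma\in(0,1)$ and $M>0$ with $\|v_n\|_{C^{1,\gamma}(\overline{\Omega})}\le M$ for all $n$; by the compact embedding $C^{1,\gamma}(\overline{\Omega})\hookrightarrow C^1(\overline{\Omega})$ we may assume, after passing to a subsequence, that $v_n\to v$ in $C^1(\overline{\Omega})$ (hence also in $\W$).

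Now distinguish two cases. If $\{t_n\}$ is bounded, we may assume $t_n\to t_0\ge 1$; then $c_n\to t_0^{q-p}>0$ and $f_n\to t_0^{-(p-1)}f$, and passing to the limit in the weak formulation (all terms converge because $v_n\to v$ in $C^1$) shows, after multiplying by $t_0^{p-1}$, that $u:=t_0v$ is a solution of $(D_{\lambda_1(p),\beta,f})$, and $u\not\equiv 0$ since $f\not\equiv 0$. As $\beta<\beta_f(\lambda_1(p))$ and $f\in L^\infty(\Omega)$, Theorem~\ref{thm:nodal} gives $u\in\text{int}\,\C_+$, hence $v=u/t_0\in\text{int}\,\C_+$. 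If $\{t_n\}$ is unbounded, pass to a subsequence with $t_n\to\infty$; then $c_n\to 0$, $f_n\to 0$, and $\|\nabla v_n\|_p=\|\nabla u_n\|_p/t_n=1$ for large $n$, so letting $n\to\infty$ yields $\|\nabla v\|_p=1$ and $v\in ES(p;\lambda_1(p))=\mathbb{R}\varphi_p$; thus $v=\pm\varphi_p/\|\nabla\varphi_p\|_p$, which belongs to $\text{int}\,\C_+\cup(-\text{int}\,\C_+)$ by Remark~\ref{ref:sign}. In either case $v\in\text{int}\,\C_+\cup(-\text{int}\,\C_+)$, so $v_n$, and therefore $u_n=t_nv_n$, has constant sign for all large $n$, contradicting the choice of $u_n$.

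The main obstacle is the absence of a priori bounds on the $u_n$ as $\alpha_n\downarrow\lambda_1(p)$, which makes the rescaling step unavoidable; the decisive structural feature is that membership in $\text{int}\,\C_+$ is an open condition only in the $C^1$-topology, so the uniform $C^{1,\gamma}$-estimate of Remark~\ref{rem:reg} — applied to the rescaled problems, whose $q$-Laplacian coefficient $c_n$ degenerates to $0$ exactly because $p>q$ — is what drives the argument. Let me also note that the restriction to $\alpha>\lambda_1(p)$ is essential: beyond $\lambda_1(p)$ solutions may genuinely become negative, an anti-maximum-type phenomenon, so Theorem~\ref{thm:nodal} cannot be extended across $\lambda_1(p)$.
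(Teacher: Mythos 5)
Your proof is correct and follows essentially the same route as the paper's: a compactness argument based on the uniform $C^{1,\gamma}$-bound of Remark~\ref{rem:reg}, extraction of a $C^1(\overline{\Omega})$-limit lying in the open set $\mathrm{int}\,\C_+\cup(-\mathrm{int}\,\C_+)$ (via Theorem~\ref{thm:nodal} when the solutions stay bounded, and via the simplicity of $\lambda_1(p)$ in the blow-up case), and the openness of that set in the $C^1$-topology. The only difference is presentational: the paper splits into the bounded and unbounded cases with separate normalizations and cites \cite[Lemma 3.3]{BobkovTanaka2016} to identify the blow-up limit as $\pm\varphi_p$, whereas you unify both cases through $t_n=\max\{1,\|\nabla u_n\|_p\}$ and identify the limit directly from the rescaled equation.
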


By means of the critical curve $\beta_{ps}(\alpha)$ defined in \eqref{eq:beta_ps}, we give the following fact. 
\begin{proposition}\label{thm:ATM} 
Let $f\in L^\infty(\Omega) \setminus \{0\}$ and $f \geq 0$ a.e.\ in $\Omega$. 
Assume that $\alpha \ge \lambda_1(p)$ and $\beta>\beta_{ps}(\alpha)$. 
Then \eqref{eq:D} has no nonnegative solutions. 
That is, any solution of \eqref{eq:D} is either nonpositive or sign-changing.
\end{proposition}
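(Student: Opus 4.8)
The plan is to argue by contradiction, exploiting the fact that the extended functional $\mathcal{L}_\alpha$ is tailored so that evaluating it along a positive solution of \eqref{eq:D} returns $\beta$ up to a nonnegative contribution of the source term. Suppose that $u$ is a nonnegative solution of \eqref{eq:D}. Since $f \not\equiv 0$, the equation forces $u \not\equiv 0$; hence, by the maximum principle together with the regularity and boundary point lemma recalled in Remarks \ref{rem:reg} and \ref{ref:sign}, we have $u \in \text{int}\,\C_+$. In particular $u>0$ in $\Omega$.

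Next I would test the weak formulation of \eqref{eq:D} against an arbitrary $\varphi \in C^1_0(\overline{\Omega})_+ \setminus \{0\}$, which is admissible because $p>q$ gives the embeddings $C^1_0(\overline{\Omega}) \subset \W \subset W_0^{1,q}$. Since $u \in \text{int}\,\C_+$ and $\varphi \geq 0$, $\varphi \not\equiv 0$, the quantity $\intO u^{q-1}\varphi\,dx$ is strictly positive, so one may divide the tested identity by it and rearrange it into
\[
\mathcal{L}_\alpha(u;\varphi) = \beta + \frac{\langle f,\varphi\rangle}{\intO u^{q-1}\varphi\,dx}.
\]
As $f \geq 0$ a.e.\ in $\Omega$ and $\varphi \geq 0$, the last fraction is nonnegative, whence $\mathcal{L}_\alpha(u;\varphi) \geq \beta$ for every such $\varphi$.

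Taking the infimum over all admissible $\varphi$ and then using $u \in \text{int}\,\C_+$ as a competitor in the definition \eqref{eq:beta_ps} of $\beta_{ps}(\alpha)$, I obtain
\[
\beta_{ps}(\alpha) \geq \inf_{\varphi \in C^1_0(\overline{\Omega})_+ \setminus\{0\}} \mathcal{L}_\alpha(u;\varphi) \geq \beta,
\]
which contradicts the hypothesis $\beta > \beta_{ps}(\alpha)$. Hence \eqref{eq:D} admits no nonnegative solution; and since $f \not\equiv 0$ excludes $u \equiv 0$, a solution that fails to be sign-changing must be nonpositive, giving the stated dichotomy. I do not foresee a genuine obstacle here: the only points needing care are the admissibility of $C^1$ test functions in the weak formulation and the upgrade from a nonzero nonnegative solution to a member of $\text{int}\,\C_+$, both of which are routine given Remarks \ref{rem:reg} and \ref{ref:sign}; the crux is simply recognizing the algebraic identity for $\mathcal{L}_\alpha(u;\varphi)$ above.
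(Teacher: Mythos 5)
Your argument is correct, but it is not the route the paper takes. The paper also begins by upgrading a nonzero nonnegative solution to $u\in\mathrm{int}\,\C_+$ via Remark \ref{ref:sign}, but then observes that such a $u$ is a super-solution of the unperturbed problem \eqref{eq:D0} (precisely because $f\geq 0$), invokes the sub-supersolution method of \cite[Lemma 6]{BobkovTanaka2015} (after noting $\beta>\beta_{ps}(\alpha)$ forces $\beta>\lambda_1(q)$) to manufacture a positive solution of \eqref{eq:D0}, and derives the contradiction from the nonexistence result \cite[Theorem 2.2]{BobkovTanaka2015}. You instead go straight to the variational definition \eqref{eq:beta_ps}: testing the weak formulation with $\varphi\in C^1_0(\overline{\Omega})_+\setminus\{0\}$ and dividing by $\intO u^{q-1}\varphi\,dx>0$ gives exactly $\mathcal{L}_\alpha(u;\varphi)=\beta+\langle f,\varphi\rangle/\intO u^{q-1}\varphi\,dx\geq\beta$, whence $\beta_{ps}(\alpha)\geq\inf_\varphi\mathcal{L}_\alpha(u;\varphi)\geq\beta$, contradicting $\beta>\beta_{ps}(\alpha)$. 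Your version is more self-contained and more elementary, replacing two imported results (the sub-supersolution lemma and the nonexistence theorem, whose proof in the cited paper is in fact essentially your computation in the case $f\equiv 0$) with a one-line identity; it also handles $\alpha=\lambda_1(p)$ on the same footing as $\alpha>\lambda_1(p)$, whereas the paper's citation chain is phrased primarily for $\alpha>\lambda_1(p)$. What the paper's detour buys is reuse of already-established machinery rather than re-deriving the nonexistence mechanism. The only points needing care in your write-up --- admissibility of $C^1_0$ test functions and strict positivity of the denominator --- are indeed routine given Remarks \ref{rem:reg} and \ref{ref:sign}.
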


Recall that $\beta_{ps}(\alpha)=\lambda_1(q)$ for all $\alpha \geq \alpha_*$. In this case, Proposition \ref{thm:ATM} can be refined as follows.
\begin{proposition}\label{thm:ATM2} 
	Let $f\in L^\infty(\Omega) \setminus \{0\}$ and $f \geq 0$ a.e.\ in $\Omega$. 
	Then for any $\alpha > \alpha_*$ there exists $\varepsilon(\alpha)>0$ such that for any $\beta \geq \lambda_1(q) - \varepsilon(\alpha)$, \eqref{eq:D} has no nonnegative solutions.
\end{proposition}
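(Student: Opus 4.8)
The plan is to derive the result from Proposition \ref{thm:ATM} by showing that for $\alpha > \alpha_*$ the curve $\beta_{ps}$ is not merely equal to $\lambda_1(q)$ at $\alpha$, but the nonexistence of nonnegative solutions persists slightly below $\lambda_1(q)$. Since $\beta_{ps}(\alpha) = \lambda_1(q)$ for $\alpha \geq \alpha_*$, Proposition \ref{thm:ATM} gives nonexistence of nonnegative solutions only for $\beta > \lambda_1(q)$; the new content is the uniform gap $\varepsilon(\alpha) > 0$. First I would argue by contradiction: suppose there is a sequence $\beta_n \uparrow \lambda_1(q)$ (or more generally $\beta_n \to \lambda_1(q)^-$, with $\beta_n < \lambda_1(q)$) and nonnegative solutions $u_n \in \text{int}\,\C_+$ of $(D_{\alpha,\beta_n,f})$, the inclusion into the interior of the positive cone following from Remark \ref{ref:sign}.

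The key step is a compactness/blow-up analysis of $\{u_n\}$. I would first test the equation with $u_n$ itself to get
$$
\|\nabla u_n\|_p^p + \|\nabla u_n\|_q^q = \alpha \|u_n\|_p^p + \beta_n \|u_n\|_q^q + \langle f, u_n\rangle.
$$
Because $\beta_n < \lambda_1(q)$, the term $G_{\beta_n}(u_n) = \|\nabla u_n\|_q^q - \beta_n\|u_n\|_q^q \geq (1 - \beta_n/\lambda_1(q))\|\nabla u_n\|_q^q \geq 0$ is controlled, but $\alpha > \lambda_1(p)$ makes $H_\alpha(u_n)$ possibly negative, so a priori $\{u_n\}$ need not be bounded in $\W$. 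I expect the main obstacle to be precisely ruling out $\|\nabla u_n\|_p \to \infty$. If $t_n := \|\nabla u_n\|_p \to \infty$, set $v_n := u_n/t_n$, so $\|\nabla v_n\|_p = 1$; passing to a weak limit $v_n \rightharpoonup v$ in $\W$ and dividing the equation by $t_n^{p-1}$, the $q$-Laplacian term, the $\beta_n|u|^{q-2}u$ term, and $f$ all scale by $t_n^{q-1}$ or $t_n^0$ and hence vanish relative to $t_n^{p-1}$ (using $p > q$), so $v$ solves $-\Delta_p v = \alpha |v|^{p-2}v$ with $v \geq 0$. Since $\alpha \geq \alpha_* > \lambda_1(p)$ and $\alpha$ need not be an eigenvalue, one must treat two subcases: if $\alpha \notin \sigma(-\Delta_p)$ then $v = 0$, contradicting $\|\nabla v_n\|_p = 1$ together with the $(S_+)$-property of $-\Delta_p$ which forces strong convergence $v_n \to v$; if $\alpha \in \sigma(-\Delta_p)$ then $v$ is a nonnegative eigenfunction, which by the strong maximum principle forces $\alpha = \lambda_1(p) < \alpha_*$, again a contradiction. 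Hence $\{u_n\}$ is bounded in $\W$.

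With boundedness in hand, Remark \ref{rem:reg} (applied with $c_n \equiv 1$, $\alpha_n \equiv \alpha$, $\beta_n \to \lambda_1(q)$, $f_n \equiv f$) gives a uniform $C^{1,\gamma}(\overline\Omega)$ bound, so along a subsequence $u_n \to u$ in $C^1_0(\overline\Omega)$ and $u$ solves the limit problem $(D_{\alpha,\lambda_1(q),f})$ with $u \geq 0$. Since $\beta = \lambda_1(q) > \beta_{ps}(\alpha) $ is false (we have equality), I would instead invoke the borderline nonexistence statement: for $\alpha \geq \alpha_*$ one checks directly that $(D_{\alpha,\lambda_1(q),f})$ admits no nonnegative solution — testing against $\varphi_q$ gives, after an integration by parts and using that $\varphi_q$ is the first $q$-eigenfunction,
$$
\intO |\nabla u|^{p-2}\nabla u \cdot \nabla \varphi_q \, dx - \alpha \intO |u|^{p-2} u\, \varphi_q\, dx = \langle f, \varphi_q\rangle \geq 0,
$$
while the definition $\alpha_* = \|\nabla\varphi_q\|_p^p / \|\varphi_q\|_p^p$ and a convexity (Picone-type) inequality for the $p$-Laplacian force the left-hand side to be strictly negative unless $u$ is proportional to $\varphi_q$, which is impossible since $\varphi_q$ does not solve the $p$-part. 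This contradiction with $u \geq 0$ being a genuine solution completes the argument. The quantity $\varepsilon(\alpha)$ is then extracted as the (positive) distance produced by this contradiction, which depends on $\alpha$ through the blow-up dichotomy and the strict Picone gap. The delicate points to write carefully are the scaling bookkeeping in the blow-up ($p > q$ is essential) and the strict sign in the Picone estimate at $\beta = \lambda_1(q)$, $\alpha = \alpha_*$ versus $\alpha > \alpha_*$.
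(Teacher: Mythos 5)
Your overall strategy (contradiction along a sequence $\beta_n \to \lambda_1(q)$, compactness of the corresponding nonnegative solutions, contradiction in the limit) is close in spirit to the paper's, and your blow-up argument for the boundedness of $\{u_n\}$ in $\W$ is essentially the one the paper invokes via \cite[Lemma 3.3]{BobkovTanaka2016} (a nonnegative limiting $p$-eigenfunction must be $\varphi_p$, forcing $\alpha=\lambda_1(p)<\alpha_*$). However, your final step contains a genuine error. Testing \eqref{eq:D} with $\varphi_q$ gives
\begin{equation*}
\intO |\nabla u|^{p-2}\nabla u\nabla\varphi_q\,dx+\intO |\nabla u|^{q-2}\nabla u\nabla\varphi_q\,dx
=\alpha\intO u^{p-1}\varphi_q\,dx+\lambda_1(q)\intO u^{q-1}\varphi_q\,dx+\langle f,\varphi_q\rangle,
\end{equation*}
and the two $q$-terms do \emph{not} cancel: the eigenvalue equation for $\varphi_q$ identifies $\intO|\nabla\varphi_q|^{q-2}\nabla\varphi_q\nabla v\,dx$ with $\lambda_1(q)\intO\varphi_q^{q-1}v\,dx$, whereas here the gradient of $u$, not of $\varphi_q$, carries the power $q-2$. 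Moreover, even if they did cancel, there is no Picone-type inequality forcing $\intO|\nabla u|^{p-2}\nabla u\nabla\varphi_q\,dx-\alpha\intO u^{p-1}\varphi_q\,dx<0$: the classical Picone inequality bounds $\intO|\nabla u|^{p-2}\nabla u\nabla\bigl(\varphi_q^p/u^{p-1}\bigr)\,dx$ by $\|\nabla\varphi_q\|_p^p=\alpha_*\|\varphi_q\|_p^p$, and it is precisely the test function $\varphi_q^p/u^{p-1}$ (admissible because $u\in\mathrm{int}\,\C_+$), not $\varphi_q$, that brings $\alpha_*$ into play.

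The paper's proof uses exactly this test function $\varphi_q^p/u_n^{p-1}$ on each $u_n$, estimating the $p$-term by the classical Picone identity and the $q$-term by the generalized Picone identity of \cite{ilyas}, which yields
$(\alpha-\alpha_*)\|\varphi_q\|_p^p+(\beta_n-\lambda_1(q))\intO\varphi_q^p u_n^{q-p}\,dx+\intO f\varphi_q^p u_n^{1-p}\,dx\le 0$.
Since $\alpha>\alpha_*$, this forces $\beta_n<\lambda_1(q)$ and $\intO\varphi_q^p u_n^{q-p}\,dx\to+\infty$, which is then excluded by the $C^1$-compactness together with uniform two-sided Hopf-type bounds $c_1\,\mathrm{dist}(x,\partial\Omega)<\varphi_q,u_n<c_2\,\mathrm{dist}(x,\partial\Omega)$. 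Your plan can be repaired by replacing your final test with the Picone test function applied to the limit solution $u\in\mathrm{int}\,\C_+$ of $(D_{\alpha,\lambda_1(q),f})$ (where $\intO\varphi_q^p u^{q-p}\,dx<\infty$ since $\varphi_q/u$ is bounded), but as written the cancellation and the sign claim in your last display are both unjustified; you would also need to treat the borderline value $\beta=\lambda_1(q)$ itself, which Proposition \ref{thm:ATM} does not cover since $\beta_{ps}(\alpha)=\lambda_1(q)$ there.
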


We schematically depict the results of Theorems \ref{cor:existence}, \ref{thm:nodal}, and Propositions \ref{thm:ATM}, \ref{thm:ATM2} in Figure \ref{fig:1} below.
\begin{figure}[h!]
	\center{
		\includegraphics[width=0.7\linewidth]{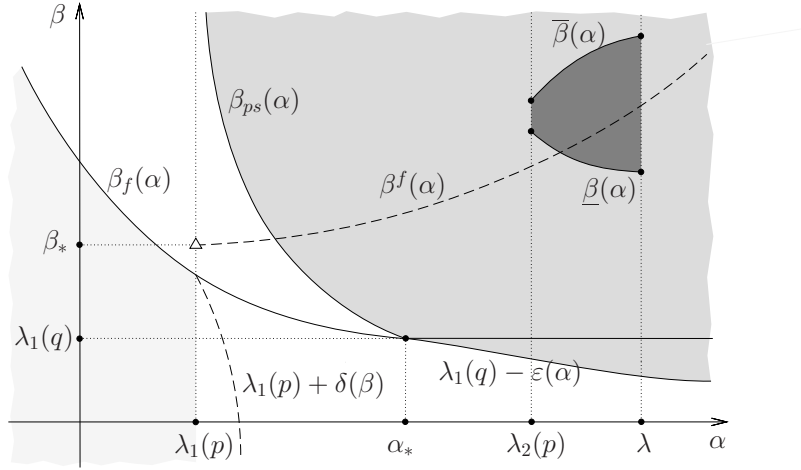}
		\caption{Assume that $f \in L^\infty(\Omega) \setminus \{0\}$, $f \geq 0$ a.e.\ in $\Omega$, and $(\lambda_2(p), \lambda)\subset \sigma(-\Delta_p)$ for some $\lambda > \lambda_2(p)$. Light gray - any solution is positive. Gray - any solution is either nonpositive or sign-changing. Dark gray - existence is unknown.}
		\label{fig:1}
	}
\end{figure}

To obtain additional qualitative properties of solutions of \eqref{eq:D}, we introduce one another family of critical values:
\begin{equation}\label{beta^f}
\beta^f(\alpha) := \sup 
\left\{
\Phi_\alpha^-(u):~ u\in \mathcal{B}^-(\alpha)
\right\}, 
\quad
\alpha \geq \lambda_1(p),
\end{equation}
where
\begin{align}
\Phi_\alpha^-(u) 
&:=\frac{\|\nabla u\|_q^q}{\|u\|_q^q} - \frac{p-1}{p-q}\left(\frac{p-q}{q-1}\right)^\frac{q-1}{p-1}\frac{\left(-H_\alpha(u)\right)^\frac{q-1}{p-1}
\langle f,u\rangle^\frac{p-q}{p-1}}{\|u\|_q^q}, 
\label{def:Phi^-} \\ 
\mathcal{B}^-(\alpha) 
&:=
\left\{
u \in \W \setminus \{0\}:~ 
u \geq 0 \text{ a.e.\ in } \Omega \text{ and }  H_\alpha(u) \leq 0
\right\}. 
\label{def:B^-} 
\end{align}
We show in Lemma \ref{lem:prop:beta^f} below that $\beta^f(\alpha) < +\infty$ for all $\alpha \geq \lambda_1(p)$. 
Note that any function $u \in \W$ can be decomposed as $u = u^+ + u^-$, where $u^+ := \max\{u, 0\} \in \W$ and $u^- := \min\{u, 0\} \in \W$.
\begin{proposition}\label{prop:aux5}
	Let $f \in W^{-1,p'}(\Omega) \setminus \{0\}$ and $f \geq 0$ in the weak sense.
	Assume that $\alpha > \lambda_1(p)$, $\beta \in \mathbb{R}$, and let $u$ be a solution of \eqref{eq:D}. 
	Then the following assertions are satisfied:
	\begin{enumerate}[label={\rm(\roman*)}]
	\item\label{prop:aux5:1} if $\beta < \beta_f(\alpha)$ and $u^- \not\equiv 0$, then $H_\alpha(u^-) < 0$;
	\item\label{prop:aux5:2} if $\beta > \beta^f(\alpha)$ and $u^+ \not\equiv 0$, then $H_\alpha(u^+) > 0$.
	\end{enumerate}
\end{proposition}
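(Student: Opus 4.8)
The plan is to test the equation in \eqref{eq:D} with the negative and positive parts of $u$ and to recognize the resulting scalar inequalities as instances of the sharp weighted Young inequality whose extremal constant is exactly the factor $\frac{p-1}{p-q}\big(\frac{p-q}{q-1}\big)^{\frac{q-1}{p-1}}$ appearing in $\Phi_\alpha^{\pm}$.

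For \ref{prop:aux5:1} I would take $u^-=\min\{u,0\}\in\W$ as a test function. Using the standard identities $\intO|\nabla u|^{r-2}\nabla u\,\nabla u^-\,dx=\|\nabla u^-\|_r^r$ and $\intO|u|^{r-2}u\,u^-\,dx=\|u^-\|_r^r$ for $r\in\{p,q\}$, the weak formulation collapses to
\[
H_\alpha(u^-)+G_\beta(u^-)=\langle f,u^-\rangle\le 0,
\]
where the last inequality uses $f\ge 0$ in the weak sense together with $u^-\le 0$. Setting $v:=|u^-|\in\W\setminus\{0\}$ (nonzero since $u^-\not\equiv 0$), one has $H_\alpha(v)=H_\alpha(u^-)$, $G_\beta(v)=G_\beta(u^-)$ and $\langle f,v\rangle\ge 0$. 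I would then argue by contradiction: assume $H_\alpha(u^-)\ge 0$, so that $v\in\mathcal{B}^+(\alpha)$ and hence $\Phi_\alpha^+(v)\ge\beta_f(\alpha)>\beta$. Expanding $G_\beta(v)=\|\nabla v\|_q^q-\beta\|v\|_q^q$ in the displayed identity and dividing by $\|v\|_q^q>0$ gives $\beta=\|\nabla v\|_q^q/\|v\|_q^q+\big(H_\alpha(v)+\langle f,v\rangle\big)/\|v\|_q^q$, so that, comparing with \eqref{def:Phi}, the inequality $\Phi_\alpha^+(v)\le\beta$ becomes
\[
\frac{p-1}{p-q}\Big(\frac{p-q}{q-1}\Big)^{\frac{q-1}{p-1}}\big(H_\alpha(v)\big)^{\frac{q-1}{p-1}}\langle f,v\rangle^{\frac{p-q}{p-1}}\le H_\alpha(v)+\langle f,v\rangle.
\]
This is exactly Young's inequality $c\,A^{\theta}B^{1-\theta}\le A+B$ ($A,B\ge 0$) with $\theta=\frac{q-1}{p-1}\in(0,1)$, $1-\theta=\frac{p-q}{p-1}$ and optimal constant $c=\theta^{-\theta}(1-\theta)^{-(1-\theta)}$, which a short computation identifies with $\frac{p-1}{p-q}\big(\frac{p-q}{q-1}\big)^{\frac{q-1}{p-1}}$. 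Hence $\beta\ge\Phi_\alpha^+(v)\ge\beta_f(\alpha)$, contradicting $\beta<\beta_f(\alpha)$; therefore $H_\alpha(u^-)<0$.

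For \ref{prop:aux5:2} the argument is symmetric: I would test with $u^+=\max\{u,0\}\in\W\setminus\{0\}$ to get $H_\alpha(u^+)+G_\beta(u^+)=\langle f,u^+\rangle\ge 0$, assume $H_\alpha(u^+)\le 0$ so that $u^+\in\mathcal{B}^-(\alpha)$ and $\Phi_\alpha^-(u^+)\le\beta^f(\alpha)<\beta$, rewrite the identity as $\beta=\|\nabla u^+\|_q^q/\|u^+\|_q^q-\big((-H_\alpha(u^+))+\langle f,u^+\rangle\big)/\|u^+\|_q^q$, and apply the same Young inequality with $A=-H_\alpha(u^+)\ge 0$, $B=\langle f,u^+\rangle\ge 0$ to conclude, via \eqref{def:Phi^-}, that $\beta\le\Phi_\alpha^-(u^+)\le\beta^f(\alpha)$, a contradiction; hence $H_\alpha(u^+)>0$.

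The only step that is not purely routine is the identification of the constant of $\Phi_\alpha^{\pm}$ with the sharp constant in the weighted Young inequality (equivalently, that the extremal configuration is $A/\theta=B/(1-\theta)$). The degenerate cases $\langle f,v\rangle=0$ or $H_\alpha(\cdot)=0$ require no separate treatment, as the Young inequality is trivially valid there. Finally, the hypothesis $\alpha>\lambda_1(p)$ is used only to keep the statement non-vacuous: by Theorem \ref{thm:nodal}, if $\alpha\le\lambda_1(p)$ and $\beta<\beta_f(\alpha)$ there is no solution with $u^-\not\equiv 0$.
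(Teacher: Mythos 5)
Your proof is correct, and it reaches the conclusion by a genuinely more self-contained route than the paper. The paper tests with $u^{\pm}$ exactly as you do to obtain the identities $H_\alpha(-u^-)+G_\beta(-u^-)+\langle f,-u^-\rangle=0$ and $H_\alpha(u^+)+G_\beta(u^+)-\langle f,u^+\rangle=0$, but then delegates the contradiction to the auxiliary Propositions \ref{prop:aux} and \ref{prop:aux4}, which are proved by a fibering argument: one studies $Q_b(t)=t^pH_\alpha(u)+t^qG_b(u)+t\langle f,u\rangle$, lowers $b$ until the minimum of $Q_b$ over $t$ touches zero, and solves the degenerate system $Q_{\tilde\beta}(t)=Q_{\tilde\beta}'(t)=0$ to recover the expression \eqref{eq:beta_f:contr1} for $\tilde\beta$, i.e.\ $\Phi_\alpha^+(u)$. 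Your observation that the coefficient $\frac{p-1}{p-q}\bigl(\frac{p-q}{q-1}\bigr)^{\frac{q-1}{p-1}}$ in \eqref{def:Phi} and \eqref{def:Phi^-} is precisely the sharp constant in $c\,A^{\theta}B^{1-\theta}\le A+B$ with $\theta=\frac{q-1}{p-1}$ is exactly the content of that fibering computation (the system $Q=Q'=0$ encodes the equality case $A/\theta=B/(1-\theta)$), so the two arguments are the same mathematics; what your version buys is that it bypasses the auxiliary propositions entirely, avoids the continuity argument for $b\mapsto q(b)$ and the case distinction $Q_\beta(1)>0$ versus $Q_\beta(1)\le 0$, and absorbs the degenerate cases $H_\alpha=0$ or $\langle f,\cdot\rangle=0$ (which the paper treats separately) into the trivial instances of Young's inequality. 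The only imprecise point is your closing remark that $\alpha>\lambda_1(p)$ merely keeps the statement non-vacuous: for part \ref{prop:aux5:2} the restriction $\alpha\ge\lambda_1(p)$ is also what makes $\mathcal{B}^-(\alpha)$ nonempty and $\beta^f(\alpha)$ well defined (and finite, by Lemma \ref{lem:prop:beta^f}); this does not affect the validity of your argument.
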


\medskip
The rest of the article is organized as follows. 
In Section \ref{sec:proofs_existence}, we prove Theorem \ref{thm:existence}. 
In Section \ref{sec:proofs_existence2}, we prove Theorem \ref{thm:existence2}. 
Section \ref{sec:proofs_sign} is devoted to the proof of the results stated in Section \ref{subsec:sign}.

\section{Proofs. Existence I}\label{sec:proofs_existence}
In this section, we prove Theorem~\ref{thm:existence}.
We start by preparing several auxiliary results.  
We will use the sequence of eigenvalues $\{\lambda_k(p)\} \subset \sigma(-\Delta_p)$ introduced in \cite{DR} which can be defined as
\begin{equation}\label{lambda_n}
\lambda_k(p) := \inf_{h\in\mathscr{F}_k(p)} \max_{x \in S^{k-1}} 
\|\nabla h(x)\|_p^p, 
\end{equation}
where $S^{k-1}$ is the unit sphere in $\mathbb{R}^k$, $k \in \mathbb{N}$, and
\begin{align} 
\label{F_n} 
\mathscr{F}_k(p)
&:=\left\{ h\in C(S^{k-1},S(p)):~ h \text{ is odd}\right\},
\\
\notag
S(p) 
&:=\left\{u\in W_0^{1,p}:~ \|u\|_p=1\right\}.
\end{align} 
It is known that $\lambda_k(p) \to +\infty$ as $k \to +\infty$, see \cite[p.\ 195]{DR}.
However, we recall that it is an open problem whether $\{\lambda_k(p)\} = \sigma(-\Delta_p)$, except in the cases $p=2$ and $N=1$, where the answer is affirmative. 

\smallskip
Along this section we assume that $f \in W^{-1,p'}(\Omega)$, and we denote by $\|f\|_*$ the norm of $f$ in $W^{-1,p'}(\Omega)$.
Recall that weak solutions of \eqref{eq:D} are critical points of the energy functional $\E \in C^1(\W, \mathbb{R})$ defined by
\begin{equation*}\label{def:E} 
\E(u) = \frac{1}{p}\, H_\alpha (u)+\frac{1}{q}\,G_\beta(u) - \langle f,u\rangle. 
\end{equation*}
To prove Theorem \ref{thm:existence}, we show that $\E$ has a linking structure provided $\alpha > \lambda_1(p)$ and $\alpha \neq \lambda_k(p)$, $k \in \mathbb{N}$. 
Then we can obtain a critical point of $\E$ whenever $\E$ satisfies the Palais--Smale condition. 

Let us consider the set
\begin{equation}\label{def:Y} 
Y(\lambda)
:=
\left\{
u\in\W:~ \|\nabla u\|_p^p \geq \lambda\|u\|_p^p 
\right\},
\quad 
\lambda \in \mathbb{R}.
\end{equation}
Hereinafter, $S^k_+$ stands for the closed unit upper hemisphere in $\mathbb{R}^{k+1}$ with the boundary $S^{k-1}$. 
We start by formulating the following linking lemma. 
\begin{lemma}[\protect{\cite[Lemma 3.1]{BobkovTanaka2016}}]\label{lem:link} 
	Let	$k \in \mathbb{N}$.
	Then $h(S^k_+)\cap Y(\lambda_{k+1}(p)) \neq \emptyset$ for any $h\in C(S^k_+,W_0^{1,p})$ provided $h\big|_{S^{k-1}}$ is odd.
\end{lemma}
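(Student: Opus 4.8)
The plan is to reduce the statement to a topological fact about odd maps between spheres, via a contradiction argument combined with a degree/Borsuk--Ulam-type consideration. Suppose, for contradiction, that some $h \in C(S^k_+, W_0^{1,p})$ with $h\big|_{S^{k-1}}$ odd satisfies $h(S^k_+) \cap Y(\lambda_{k+1}(p)) = \emptyset$. This means that for every $x \in S^k_+$ we have $\|\nabla h(x)\|_p^p < \lambda_{k+1}(p)\|h(x)\|_p^p$; in particular $h(x) \neq 0$ for all $x$, so we may normalize and define $g(x) := h(x)/\|h(x)\|_p \in S(p)$, which is a continuous map $S^k_+ \to S(p)$ satisfying $\|\nabla g(x)\|_p^p < \lambda_{k+1}(p)$ for all $x \in S^k_+$. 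Note that on the boundary $S^{k-1}$, the oddness of $h$ forces $g$ to be odd as well (since the normalizing factor $\|h(x)\|_p$ is even in $x$).

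The next step is to produce from $g$ an odd map $\widetilde{h} \in C(S^k, S(p))$, i.e., an element of $\mathscr{F}_{k+1}(p)$, with the property that $\max_{x \in S^k}\|\nabla \widetilde{h}(x)\|_p^p < \lambda_{k+1}(p)$. The idea is to glue the map $g$ on the upper hemisphere $S^k_+$ with its antipodal reflection $-g(-\,\cdot\,)$ on the lower hemisphere $S^k_-$; these agree on the common equator $S^{k-1}$ precisely because $g$ is odd there. The resulting map $\widetilde{h}$ is continuous, odd on all of $S^k$, and lands in $S(p)$; moreover $\|\nabla \widetilde{h}(x)\|_p = \|\nabla g(\pm x)\|_p$ is bounded by the same constant strictly below $\lambda_{k+1}(p)$ at every point. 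Hence $\widetilde{h} \in \mathscr{F}_{k+1}(p)$ realizes a value of the minimax functional strictly less than $\lambda_{k+1}(p)$, contradicting the definition \eqref{lambda_n} of $\lambda_{k+1}(p)$ as the infimum over $\mathscr{F}_{k+1}(p)$ of $\max_{x\in S^k}\|\nabla h(x)\|_p^p$. This contradiction proves the lemma.

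The main obstacle I anticipate is making the gluing rigorous: one must check that the two pieces fit together continuously along the equator and that oddness is genuinely inherited, and — crucially — that the supremum of $\|\nabla g(x)\|_p^p$ over the \emph{compact} set $S^k_+$ is actually a maximum strictly below $\lambda_{k+1}(p)$, rather than merely a supremum equal to it. Compactness of $S^k_+$ and continuity of $x \mapsto \|\nabla g(x)\|_p^p$ (which follows from continuity of $h$ into $W_0^{1,p}$ together with the strict positivity of $\|h(x)\|_p$ on the compact domain) give exactly this: the maximum is attained and is $< \lambda_{k+1}(p)$. A secondary technical point is to confirm that the identification of $S^k$ as $S^k_+ \cup S^k_-$ glued along $S^{k-1}$, with the antipodal map swapping the hemispheres, is compatible with the convention that $S^{k-1} = \partial S^k_+$; this is standard but should be stated. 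Everything else is a direct appeal to the variational characterization \eqref{lambda_n}.
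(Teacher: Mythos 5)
Your argument is correct and is essentially the proof given in the cited source \cite[Lemma 3.1]{BobkovTanaka2016} (the present paper only quotes the lemma): one normalizes $h$ on $S^k_+$ (possible since $0\in Y(\lambda_{k+1}(p))$ forces $h\neq 0$ there), extends oddly to $S^k$, and contradicts the infimum in \eqref{lambda_n} via compactness of $S^k$. The technical points you flag (continuity of the normalization, agreement of the two pieces on the equator, attainment of the maximum) are all handled correctly.
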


\begin{lemma}\label{lem:EY}
	Let $\alpha, \beta \in \mathbb{R}$. If $\lambda>\max\{\alpha,0\}$, then $\E$ is bounded from below and coercive on $Y(\lambda)$. 
\end{lemma}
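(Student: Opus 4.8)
The plan is to show directly that $\E(u) \to +\infty$ as $\|\nabla u\|_p \to \infty$ with $u \in Y(\lambda)$, which gives both boundedness from below and coercivity on that set. The key point is that on $Y(\lambda)$ the quadratic-type term $H_\alpha(u) = \|\nabla u\|_p^p - \alpha\|u\|_p^p$ is controlled from below: if $\alpha \le 0$, then $H_\alpha(u) \ge \|\nabla u\|_p^p \ge 0$ trivially, while if $\alpha > 0$, then on $Y(\lambda)$ we have $\|u\|_p^p \le \frac{1}{\lambda}\|\nabla u\|_p^p$, so $H_\alpha(u) \ge \left(1 - \frac{\alpha}{\lambda}\right)\|\nabla u\|_p^p$ with $1 - \frac{\alpha}{\lambda} > 0$ since $\lambda > \alpha$. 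In both cases there is a constant $c_1 = c_1(\alpha,\lambda) > 0$ with $H_\alpha(u) \ge c_1 \|\nabla u\|_p^p$ for all $u \in Y(\lambda)$.

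Next I would estimate the remaining two terms. For $G_\beta(u) = \|\nabla u\|_q^q - \beta\|u\|_q^q$, using the continuous embedding $W_0^{1,p} \hookrightarrow W_0^{1,q}$ (recall $p > q$ and $\Omega$ bounded) together with $\|\nabla u\|_q \le C\|\nabla u\|_p$, I get $|G_\beta(u)| \le \|\nabla u\|_q^q + |\beta|\,\|u\|_q^q \le c_2 \|\nabla u\|_p^q$ for some $c_2 > 0$. For the source term, $|\langle f, u\rangle| \le \|f\|_* \|\nabla u\|_p$. Hence for $u \in Y(\lambda)$,
\[
\E(u) \ge \frac{c_1}{p}\,\|\nabla u\|_p^p - \frac{c_2}{q}\,\|\nabla u\|_p^q - \|f\|_*\,\|\nabla u\|_p.
\]
Since $p > q > 1$, the leading term $\frac{c_1}{p}\|\nabla u\|_p^p$ dominates both $\|\nabla u\|_p^q$ and $\|\nabla u\|_p$ as $\|\nabla u\|_p \to \infty$, so the right-hand side tends to $+\infty$; in particular it is bounded from below over all of $Y(\lambda)$, and $\E(u) \to \infty$ as $\|\nabla u\|_p \to \infty$ within $Y(\lambda)$, which is exactly coercivity on $Y(\lambda)$.

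There is no real obstacle here; the statement is essentially an exercise in keeping track of the exponents and using that $Y(\lambda)$ is precisely the set where the Poincaré-type inequality needed to make $H_\alpha$ coercive is available. The only point that deserves a line of care is the borderline case $\alpha = 0$ versus $\alpha < 0$ versus $\alpha > 0$, handled uniformly by the observation above that $H_\alpha(u) \ge \min\{1, 1 - \alpha/\lambda\}\,\|\nabla u\|_p^p$ on $Y(\lambda)$ whenever $\lambda > \max\{\alpha, 0\}$ (when $\alpha \le 0$ the factor is $1$). Everything else is the elementary fact that a function of the form $a t^p - b t^q - c t$ with $a > 0$ and $p > q \ge 1$ is bounded below and tends to $+\infty$ as $t \to +\infty$.
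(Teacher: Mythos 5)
Your proof is correct and follows essentially the same route as the paper: control $H_\alpha$ from below on $Y(\lambda)$ via the defining inequality $\|\nabla u\|_p^p \geq \lambda\|u\|_p^p$, absorb $G_\beta$ and the source term using H\"older/Poincar\'e into lower-order powers of $\|\nabla u\|_p$, and conclude from $p>q>1$. Your explicit case split on the sign of $\alpha$ (yielding the factor $\min\{1,1-\alpha/\lambda\}$) is if anything a touch more careful than the paper's single displayed estimate.
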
 
\begin{proof}
	Let $u \in Y(\lambda)$. 
	We have
	$$
	\E(u) \geq \frac{1}{p}\left(1-\frac{\alpha}{\lambda}\right) \|\nabla u\|_p^p - \frac{\beta}{q(\lambda_1(p))^{q/p}}|\Omega|^\frac{p-q}{p} \|\nabla u\|_p^q - \|f\|_* \|\nabla u\|_p,
	$$
	where we used the H\"older and Poincar\'e inequalities to estimate the term $\|u\|_q^q$ of $\E(u)$. Since $p>q>1$ and $\lambda>\max\{\alpha,0\}$, we easily deduce the desired boundedness from below and coercivity of $\E$ on $Y(\lambda)$.
\end{proof}

\begin{lemma}\label{PS} 
	Let $\alpha,\beta \in \mathbb{R}$ be such that
	\begin{equation}\label{hyp:G} 
	G_\beta (u) \not=0 
	\quad 
	\text{for any } 
	u\in ES(p;\alpha)\setminus\{0\}.
	\end{equation}
	Then $\E$ satisfies the Palais--Smale condition. 
\end{lemma}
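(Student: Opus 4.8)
The plan is to take a Palais--Smale sequence $\{u_n\} \subset \W$, i.e.\ $\{\E(u_n)\}$ is bounded and $\E'(u_n) \to 0$ in $W^{-1,p'}(\Omega)$, and show it has a convergent subsequence. The first and main task is to establish that $\{u_n\}$ is bounded in $\W$. Arguing by contradiction, suppose $\|\nabla u_n\|_p \to \infty$, and set $v_n := u_n / \|\nabla u_n\|_p$, so $\|\nabla v_n\|_p = 1$. Passing to a subsequence, $v_n \rightharpoonup v$ in $\W$ and $v_n \to v$ in $L^p(\Omega)$ and $L^q(\Omega)$ by compact embedding. From $\E'(u_n) \to 0$ one gets, after dividing by $\|\nabla u_n\|_p^{p-1}$, that
\[
-\Delta_p v_n - \|\nabla u_n\|_p^{q-p}\Delta_q v_n - \alpha |v_n|^{p-2} v_n - \beta \|\nabla u_n\|_p^{q-p} |v_n|^{q-2} v_n - \frac{f}{\|\nabla u_n\|_p^{p-1}} \to 0
\]
in $W^{-1,p'}(\Omega)$. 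Since $p > q$, the factors $\|\nabla u_n\|_p^{q-p} \to 0$, and the $f$-term vanishes; testing the difference with $v_n - v$ and using that $\{|v_n|^{p-2}v_n\}$ is bounded in $L^{p'}$ while $v_n - v \to 0$ in $L^p$, one deduces $\langle -\Delta_p v_n, v_n - v\rangle \to 0$. By the standard $S_+$-property of the $p$-Laplacian this yields $v_n \to v$ strongly in $\W$, so $\|\nabla v\|_p = 1$; in particular $v \not\equiv 0$. Passing to the limit in the displayed convergence then shows $-\Delta_p v = \alpha |v|^{p-2} v$, i.e.\ $v \in ES(p;\alpha)\setminus\{0\}$.

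Now I would extract the extra information encoded in the boundedness of $\{\E(u_n)\}$ and in $\E'(u_n) \to 0$ to reach a contradiction with hypothesis \eqref{hyp:G}. Using $\E'(u_n)[u_n] \to 0$ (more precisely, $|\E'(u_n)[u_n]| \le \varepsilon_n \|\nabla u_n\|_p$ with $\varepsilon_n \to 0$) we have $H_\alpha(u_n) + G_\beta(u_n) - \langle f, u_n\rangle = o(\|\nabla u_n\|_p)$, while boundedness of $\E(u_n) = \frac1p H_\alpha(u_n) + \frac1q G_\beta(u_n) - \langle f,u_n\rangle$ gives a second relation. Combining these two to eliminate the $\langle f, u_n\rangle$ term, I get $\left(\frac1q - \frac1p\right) G_\beta(u_n) = O(1) + o(\|\nabla u_n\|_p)$, hence $G_\beta(u_n) = o(\|\nabla u_n\|_p^p)$ once we also control $H_\alpha(u_n)$; since $v_n \to v$ in $\W$ and $v \in ES(p;\alpha)$, we have $H_\alpha(u_n)/\|\nabla u_n\|_p^p = H_\alpha(v_n) \to H_\alpha(v) = 0$, so indeed $H_\alpha(u_n) = o(\|\nabla u_n\|_p^p)$, and feeding this back yields $G_\beta(u_n)/\|\nabla u_n\|_p^q \to 0$, i.e.\ $G_\beta(v_n) \to 0$. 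But $G_\beta$ is continuous on $\W$ and $v_n \to v$, so $G_\beta(v) = 0$ with $v \in ES(p;\alpha)\setminus\{0\}$, contradicting \eqref{hyp:G}. (I will need to be a little careful bookkeeping the powers $p$ versus $q$ here; the key point is that the $\Delta_q$ and $\beta$ terms are subcritical relative to the $p$-scaling, so $G_\beta$ genuinely measures the ``leftover'' and must vanish along the normalized sequence, which is exactly what is forbidden.) This rules out unboundedness, so $\{u_n\}$ is bounded in $\W$.

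Once boundedness is in hand, the conclusion is routine: a subsequence satisfies $u_n \rightharpoonup u$ in $\W$ and $u_n \to u$ in $L^p(\Omega) \cap L^q(\Omega)$. Testing $\E'(u_n) \to 0$ against $u_n - u$, the lower-order terms $\alpha \intO |u_n|^{p-2} u_n (u_n - u)\,dx$, $\beta \intO |u_n|^{q-2} u_n (u_n - u)\,dx$, and $\langle f, u_n - u\rangle$ all tend to $0$, and so does the full $\Delta_q$ contribution $\intO |\nabla u_n|^{q-2}\nabla u_n \nabla(u_n - u)\,dx$ by the weak convergence combined with uniform $L^q$-bounds on the gradients after splitting off $\intO |\nabla u|^{q-2}\nabla u \nabla(u_n-u)\,dx \to 0$ and using monotonicity; what remains gives $\limsup_n \intO |\nabla u_n|^{p-2}\nabla u_n \nabla(u_n - u)\,dx \le 0$, which by the $S_+$-property of $-\Delta_p$ on $\W$ forces $u_n \to u$ strongly in $\W$. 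This completes the verification of the Palais--Smale condition. The main obstacle is the boundedness step: making the blow-up argument precise so that hypothesis \eqref{hyp:G} is the only thing that fails, which requires correctly tracking that the $q$-order terms vanish under the $p$-homogeneous rescaling.
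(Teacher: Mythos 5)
Your proposal follows essentially the same route as the paper: blow-up of the normalized sequence $v_n = u_n/\|\nabla u_n\|_p$ to a strong limit $v \in ES(p;\alpha)\setminus\{0\}$ (this is exactly the argument of \cite[Lemma 3.3]{BobkovTanaka2016}, which the paper cites), followed by extracting $G_\beta(v)=0$ from the two relations $\E(u_n)=O(1)$ and $\langle \E'(u_n),u_n\rangle=o(\|\nabla u_n\|_p)$, and finally the $(S_+)$-property to upgrade boundedness to strong convergence. One warning about the middle step, which you yourself flag: the detour through $H_\alpha(u_n)=o(\|\nabla u_n\|_p^p)$ is a dead end. It only yields $G_\beta(u_n)=o(\|\nabla u_n\|_p^p)$, and dividing by $\|\nabla u_n\|_p^q$ gives $G_\beta(v_n)=o(\|\nabla u_n\|_p^{p-q})$, which does \emph{not} tend to zero since $p>q$. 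The combination you actually need is the one the paper uses, $p\E(u_n)-\langle \E'(u_n),u_n\rangle$, in which $H_\alpha(u_n)$ cancels \emph{exactly} (no asymptotic control of $H_\alpha$ is needed) and which reads
\begin{equation*}
\Bigl(\tfrac{p}{q}-1\Bigr)G_\beta(u_n)=(p-1)\langle f,u_n\rangle+O(1)+o(\|\nabla u_n\|_p)=O(\|\nabla u_n\|_p),
\end{equation*}
using only $|\langle f,u_n\rangle|\le\|f\|_*\|\nabla u_n\|_p$; dividing by $\|\nabla u_n\|_p^q$ with $q>1$ then gives $G_\beta(v_n)\to0$ directly, hence $G_\beta(v)=0$, contradicting \eqref{hyp:G}. (Note also that this combination eliminates $H_\alpha$, not the $\langle f,u_n\rangle$ term as you state.) With that bookkeeping corrected, the argument is complete and coincides with the paper's proof.
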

\begin{proof} 
	Let $\{u_n\} \subset \W$ be a Palais--Smale sequence for $\E$. 
	In view of the $(S_+)$-property of the operator $-\Delta_p - \Delta_q$ (see, e.g., \cite[Remark 3.5]{BobkovTanaka2016}), it is sufficient to show the boundedness of $\{u_n\}$ in order to establish the desired Palais--Smale condition for $\E$.
	Suppose, by contradiction, that $\|\nabla u_n\|_p \to +\infty$ as $n \to +\infty$, up to a subsequence. Then, arguing in the same way as in \protect{\cite[Lemma 3.3]{BobkovTanaka2016}}, we see that the sequence of normalized functions
	$v_n:=u_n/\|\nabla u_n\|_p$ converges strongly in $\W$ (up to a subsequence) to some 
	$v_0\in ES(p;\alpha)\setminus\{0\}$. 
	Hence, we get a contradiction whenever $\alpha\not\in\sigma(-\Delta_p)$. 
	On the other hand, if $\alpha\in\sigma(-\Delta_p)$, then
	\begin{align*} 
	o(1) =\frac{1}{\|\nabla u_n\|_p^q}
	\left(p\E(u_n) -\left< \E^\prime (u_n),u_n\right>\right) =\left(\frac{p}{q}-1\right)\,G_\beta (v_n)-\frac{p-1}{\|\nabla u_n\|_p^{q-1}}\,
\langle f,v_n\rangle
	\end{align*}
	as $n\to +\infty$. This yields $G_\beta (v_0)=0$, which contradicts \eqref{hyp:G}.
\end{proof} 

\smallskip
We are now in a position to prove Theorem \ref{thm:existence}. The proof will be split into three cases, each of which is considered in a separate subsection.

\subsection{Case \texorpdfstring{$\alpha\not\in \{\lambda_k(p):\,k\in\mathbb{N}\}$}{alpha not lambda-k}}
\label{subsec:nonresonant} 
First, we briefly handle the case $\alpha<\lambda_1(p)$. 
Since $Y(\lambda_1(p)) = \W$, where $Y(\lambda_{1}(p))$ is given by \eqref{def:Y}, Lemma \ref{lem:EY} implies that $\E$ is bounded from below and coercive on $\W$. 
Therefore, there exists a global minimizer of $\E$ which is a solution of \eqref{eq:D}.

Assume now that $\alpha > \lambda_1(p)$ and $\alpha\not\in \{\lambda_k(p):\,k\in\mathbb{N}\}$. (Recall that the case $\alpha \in \sigma(-\Delta_p)$ is still possible.)
Since $\lambda_k(p)\to +\infty$ as $k \to +\infty$, we can find $k \in \mathbb{N}$ such that $\lambda_k(p)<\alpha<\lambda_{k+1}(p)$.
Define 
\begin{gather*} 
\gamma :=\inf\left\{\E(u):~u\in Y(\lambda_{k+1}(p))\right\}>-\infty, 
\\
\Lambda :=\left\{h\in C (S_+^k,\W):~ \max_{x\in S^{k-1}}\E(h(x))
\le \gamma-1\ \text{and}\ h\big|_{S^{k-1}} \ \text{is\ odd}\,\right\}, 
\\
c := \inf_{h\in \Lambda}\max_{x\in S_+^k} \E(h(x)), 
\end{gather*}
where $\gamma$ is well-defined by Lemma \ref{lem:EY}. 
Note that if $\Lambda \neq \emptyset$, then Lemma \ref{lem:link} implies $c\ge \gamma$. 
Moreover, under assumption \ref{thm:existence:G-} or \ref{thm:existence:G+} of the theorem, $\E$ satisfies the Palais--Smale condition by Lemma~\ref{PS}.
Therefore, by the standard arguments via the deformation lemma we see that $c$ is a critical value of $\E$ provided $\Lambda \neq \emptyset$.
Thus, let us construct an admissible map to show that $\Lambda\not=\emptyset$. 

Let us choose some $0<\varepsilon<(\alpha-\lambda_k(p))/2$. 
Then, due to the definition of $\lambda_k(p)$, there exists 
$h_0\in \mathscr{F}_k(p)$ such that 
$$
\max_{x\in S^{k-1}}\|\nabla h_0(x)\|_p^p<\lambda_k(p)+\varepsilon. 
$$
This implies
\begin{equation}\label{eq:H<0}
\max_{x\in S^{k-1}} H_\alpha(h_0(x))<
\lambda_k(p)+\varepsilon-\alpha 
<-\varepsilon.
\end{equation}
Therefore, recalling that $p>q>1$, we can find a sufficiently large $T_0>0$ such that
\begin{align}
\label{eq:E<g-1}
\max_{x\in S^{k-1}} &\E(T h_0(x))
<-\frac{T^p \varepsilon}{p}
+\frac{T^q}{q} \max_{x\in S^{k-1}}G_\beta(h_0(x)) + T \|f\|_*  (\lambda_k(p)+\varepsilon)^{1/p}
\le \gamma -1
\end{align}
for all $T \geq T_0$.
Using Dugundji's extension theorem \cite{dug}, we may assume that $h_0 \in C(S^k_+,\W)$, and hence $T_0 h_0 \in \Lambda$. Thus, $\Lambda \neq \emptyset$, which implies that $c$ is a critical value of $\E$.
\qed

\subsection{Case \texorpdfstring{$\alpha\in \{\lambda_k(p):\,k\in\mathbb{N}\}$}{alpha in lambda-k i} under assumption \ref{thm:existence:G-}}\label{sec:G-} 

We may assume that $\lambda_k(p)=\alpha<\lambda_{k+1}(p)$ 
for some $k\in\mathbb{N}$. 
Let $\{\alpha_n\} \subset \mathbb{R}$ be a decreasing sequence satisfying 
\begin{equation*} 
\lambda_k(p)=\alpha<\alpha_n<\lambda_{k+1}(p) \quad 
\text{for all } 
n\in\mathbb{N},
\text{ and } \lim_{n\to+\infty}\alpha_n=\alpha. 
\end{equation*}
Similarly to Section \ref{subsec:nonresonant} above, consider for each $n \in \mathbb{N}$,
\begin{gather} 
\gamma_n :=\inf\{\En(u):~ u\in Y(\lambda_{k+1}(p))\}>-\infty, 
\nonumber \\
\Lambda_n :=\left\{h\in C (S_+^k,\W):~ \max_{x\in S^{k-1}}\En(h(x))
\le \gamma_n-1\ \text{and}\ h\big|_{S^{k-1}} \ \text{is\ odd}\,\right\}, 
\nonumber \\
c_n := \inf_{h\in \Lambda_n}\max_{x\in S_+^k} \En(h(x)). 
\label{def:cn}
\end{gather}
Arguing as in Section \ref{subsec:nonresonant}, we get $\Lambda_n \neq \emptyset$, and hence Lemma~\ref{lem:link} yields $c_n\ge \gamma_n$. Moreover, since $\{\alpha_n\}$ is a decreasing sequence, we have 
$H_{\alpha_n}(u)\ge H_{\alpha_1}(u)$ for any $n\in\mathbb{N}$ and 
$u\in\W$, whence 
\begin{equation}
\label{eq:cn>gn}
c_n \ge \gamma_n \ge 
\inf\{E_{\alpha_1,\beta}(u):~ u\in Y(\lambda_{k+1}(p))\}>-\infty,
\end{equation}
where the last inequality follows from Lemma \ref{lem:EY}. 
That is, $\{c_n\}$ is bounded from below. 

Now we claim that for any $n \in \mathbb{N}$ and $\varepsilon>0$ there exists 
$u_n^\varepsilon\in\W$ such that 
\begin{equation}
\label{eq:findps}
|\En(u_n^\varepsilon)-c_n|< \varepsilon 
\quad \text{and}\quad 
\|\En^\prime(u_n^\varepsilon)\|_*<\varepsilon. 
\end{equation}
Suppose, by contradiction, that there exist some $n\in\mathbb{N}$ and $\varepsilon>0$ such that 
$$
u\in \En^{-1}([c_n-\varepsilon,c_n+\varepsilon]))
\quad \text{implies} \quad 
\|\En^\prime(u)\|_*\ge \varepsilon.
$$
(Clearly, the set $\En^{-1}([c_n-\varepsilon,c_n+\varepsilon])$ is nonempty.)
Let us fix some $\varepsilon^\prime \in (0, \min\{\varepsilon,c_n-(\gamma_n-1)\})$. 
Then, using the deformation lemma (see, e.g., \cite[Theorem 3.4 of Chapter I]{chang}), we can find a map $\eta\in C(\W,\W)$ which satisfies the following two assertions:
\begin{enumerate}[label={\rm(\roman*)}]
\item\label{proof:thm:1} $\eta(u)=u$ for any $u$ such that 
$u\not\in\En^{-1}([c_n-\varepsilon^\prime,c_n+\varepsilon^\prime])$;
\item\label{proof:thm:2} $\En(\eta(u))\le c_n-\varepsilon^\prime/2$ 
whenever $\En(u)\le c_n+\varepsilon^\prime/2$.
\end{enumerate}
By the definition \eqref{def:cn} of $c_n$, there exists $h\in\Lambda_n$ such that 
$\max\limits_{x\in S_+^k} \En(h(x))\le c_n+\varepsilon^\prime/2$. 
Since $h\in\Lambda_n$, we have $\max\limits_{x\in S^{k-1}} \En(h(x)) \le \gamma_n-1$. Thus, in view of the inequality $\gamma_n-1<c_n-\varepsilon^\prime$, assertion \ref{proof:thm:1} implies that $\eta(h(x))=h(x)$ for all $x\in S^{k-1}$, and hence $\eta\circ h\in \Lambda_n$. 
On the other hand, assertion \ref{proof:thm:2} yields $\max\limits_{x\in S_+^k} \En(\eta(h(x)))\le c_n-\varepsilon^\prime/2$, which contradicts the definition of $c_n$. 
Consequently, the existence of $\{u_n^\varepsilon\} \subset \W$ satisfying \eqref{eq:findps} is shown.

In view of \eqref{eq:findps}, we can find for each $n \in \mathbb{N}$ a function $u_n \in \W$ such that
\begin{equation}
\label{eq:e<1/n}
|\En(u_n)-c_n| < \frac{1}{n} 
\quad \text{and}\quad 
\|\En^\prime(u_n)\|_*<\frac{1}{n}.
\end{equation}
If $\|\nabla u_n\|_p \to +\infty$ as $n \to +\infty$, up to a subsequence, 
then by the same arguments as in the proof of \cite[Lemma 3.3]{BobkovTanaka2016} the second inequality in \eqref{eq:e<1/n} implies that the sequence of normalized functions $v_n:=u_n/\|\nabla u_n\|_p$ has a subsequence strongly convergent in $\W$ to some $v_0\in ES(p;\alpha)\setminus\{0\}$. 
Moreover, passing to the limit along this subsequence in 
\begin{align*} 
\frac{p(c_n-1)}{\|\nabla u_n\|_p^q} 
-\frac{1}{n\|\nabla u_n\|_p^{q-1}}
&\le \frac{1}{\|\nabla u_n\|_p^q}
\left(\,p\En(u_n) - \left<\En^\prime (u_n),u_n\right> \right) 
\\
&=\left(\frac{p}{q}-1\right)\,G_\beta (v_n)
-\frac{p-1}{\|\nabla u_n\|_p^{q-1}}\,\langle f,v_n\rangle
\end{align*}
and recalling that $\{c_n\}$ is bounded from below (see \eqref{eq:cn>gn}), 
we get $G_\beta(v_0) \geq 0$. 
However, this contradicts assumption \ref{thm:existence:G-} of the theorem. 
Therefore, $\{u_n\}$ is bounded in $\W$ and, consequently, $\{c_n\}$ is also bounded.
Furthermore, noting that
\begin{align*}
\|\E^\prime(u_n)\|_*
\le \|\E^\prime(u_n)-\En^\prime(u_n)\|_*
+\|\En^\prime(u_n)\|_*
\leq 
\frac{\alpha_n-\alpha}{\lambda_1(p)^{1/p}}\|u_n\|_p^{p-1}
+\frac{1}{n},
\end{align*}
we obtain from \eqref{eq:e<1/n} that $\{u_n\}$ is a (bounded) Palais--Smale sequence for $\E$. 
Consequently, $\E$ possesses a critical point since it satisfies the Palais--Smale condition by Lemma \ref{PS}.

\subsection{Case \texorpdfstring{$\alpha\in \{\lambda_k(p):\,k\in\mathbb{N}\}$}{alpha in lambda-k ii} under assumption \ref{thm:existence:G+}}
\label{sec:G+}  
We may assume that $\lambda_k(p)<\alpha=\lambda_{k+1}(p)$ 
for some $k\in\mathbb{N}$. 
Let $\{\alpha_n\}$ be an increasing sequence satisfying 
\begin{equation*}\label{set:seq} 
\lambda_k(p)<\alpha_n<\alpha=\lambda_{k+1}(p) \quad 
\text{for all } 
n\in\mathbb{N},
\text{ and } 
\lim_{n \to +\infty}\alpha_n=\alpha. 
\end{equation*}
Let us define $\gamma_n$, $\Lambda_n$, and $c_n$ as in Section \ref{sec:G-} above. 
By the same arguments as in Section \ref{sec:G-}, 
for any $n \in \mathbb{N}$ we can find $u_n \in \W$ such that 
$$
|\En(u_n)-c_n| < \frac{1}{n} 
\quad \text{and}\quad 
\|\En^\prime(u_n)\|_* < \frac{1}{n}.	
$$
We claim that $\{c_n\}$ is bounded from above. 
If the claim is true, then, as in Section \ref{sec:G-}, the inequality 
\begin{align*} 
\frac{p(c_n+1)}{\|\nabla u_n\|_p^q}+\frac{1}{n\|\nabla u_n\|_p^{q-1}} 
&\ge \frac{1}{\|\nabla u_n\|_p^q}
\left(p\En(u_n) -\left<\En^\prime (u_n),u_n\right>\right) 
\\
&=\left(\frac{p}{q}-1\right)\,G_\beta(v_n)-\frac{p-1}{\|\nabla u_n\|_p^{q-1}}\,\langle f,v_n\rangle,
\end{align*}
in combination with assumption \ref{thm:existence:G+} of the theorem, implies the boundedness of $\{u_n\}$ in $\W$, which yields the existence of a critical point of $\E$.

Let us prove that $\{c_n\}$ is bounded from above.
Note that $\Lambda_n \neq \emptyset$ for any $n \in \mathbb{N}$ by the same arguments as in Section \ref{subsec:nonresonant}. 
In particular, for $n=1$ we can find a map $h_1\in C(S^k_+,\W)$ and a sufficiently large $T_1>0$
such that $h_1\big|_{S^{k-1}}$ is odd, and
\begin{align}
\label{eq:T_1}
&\max_{x\in S^{k-1}} E_{\alpha_1,\beta}(Th_1(x)) 
\leq \gamma_1 - 1
\quad 
\text{for any } T\ge T_1, \\
\label{eq:T_12}
&\max_{x\in S^{k-1}} E_{\alpha_1,\beta}(Th_1(x)) \to -\infty
\quad
\text{as}
\quad
T \to +\infty,
\end{align}
see \eqref{eq:E<g-1}.
Moreover, recalling that $\{\alpha_n\}$ is increasing, we have $H_{\alpha_n}(u)\le H_{\alpha_1}(u)$ and hence $E_{\alpha_n,\beta}(u)\le E_{\alpha_1,\beta}(u)$ for all $n\in\mathbb{N}$ and $u\in\W$. 
Therefore, in view of \eqref{eq:T_12}, for any $n \in \mathbb{N}$ there exists a sufficiently large $T_n > 0$ such that 
\begin{equation}
\label{eq:en<g-1} 
\max_{x\in S^{k-1}}\En(T_n h_1(x)) \le 
\max_{x\in S^{k-1}}E_{\alpha_1,\beta}(T_n h_1(x)) 
\le \gamma_n -1.
\end{equation}
We may assume that $\{T_n\}$ is increasing.

Define a map $h_n \in C(S^k_+,\W)$ for $n\ge 2$ by 
$$
h_n(x):= 
\begin{cases} 
\big(\,2x_{k+1}T_1+T_n(1-2x_{k+1})\,\big)\,
h_1\left(\frac{x^\prime}{\sqrt{1-x_{k+1}^2}},0\,\right) 
& \text{if}\ 0\le x_{k+1}\le \frac{1}{2}, 
\\ 
T_1 h_1\left(\frac{2}{\sqrt{3}}x^\prime,\frac{2}{\sqrt{3}}
\,\sqrt{x_{k+1}^2-\frac{1}{4}}\right) 
& \text{if}\ \frac{1}{2}\le x_{k+1}\le 1, 
\end{cases}
$$
where $x=(x_1,\cdots,x_{k+1})\in S^k_+$ and
$x^\prime=(x_1,\cdots,x_k)$. 
Considering $x_{k+1}=0$, we deduce from \eqref{eq:en<g-1} that $h_n\in \Lambda_n$.
Moreover, if $0\le x_{k+1}\le \frac{1}{2}$, then
$$
\En(h_n(x)) \le E_{\alpha_1,\beta}(h_n(x))
\le \max_{T_1\le T\le T_n} \max_{x\in S^{k-1}}E_{\alpha_1,\beta}(T h_1(x)) \leq \gamma_1-1
$$
by \eqref{eq:T_1}, and 
if $\frac{1}{2} \le x_{k+1}\le 1$, then
$$
\En(h_n(x)) \leq \max_{x\in S^{k}_+}E_{\alpha_n,\beta}(T_1 h_1(x)) \leq \max_{x\in S^{k}_+}E_{\alpha_1,\beta}(T_1 h_1(x)) < +\infty.
$$
Therefore, since $c_n \leq \max\limits_{x\in S_+^k} \En(h_n(x))$ for each $n \in \mathbb{N}$, we conclude that $\{c_n\}$ is bounded from above, which finishes the proof of Theorem \ref{thm:existence}.

\section{Proofs. Existence II}\label{sec:proofs_existence2}
In this section, we prove Theorem~\ref{thm:existence2}.
First, we provide the following auxiliary result.  
Let us decompose any $u \in L^2(\Omega)$ as $u = \gamma_u \varphi_p + u^\perp$, where $\varphi_p$ is the first eigenfunction of the $p$-Laplacian, 
\begin{equation}
\label{eq:l2dec}
\gamma_u := \|\varphi_p\|_2^{-2} \intO u \varphi_p \, dx
\quad\text{and}\quad
\intO u^\perp  \varphi_p \, dx = 0.
\end{equation}
It is clear that $u^\perp=u-\gamma_u\varphi_p\in\W$ provided 
$u\in\W$. 

\begin{lemma}\label{lem:estimate:G}
	Let $p \geq 2$. Assume that $\beta \in (\lambda_1(q), \beta_*]$. 
	Then there exists a constant $C>0$ such that for any $u \in \W$ satisfying $G_\beta(u) < 0$ there holds
	\begin{equation}
	\label{eq:gest}
	|G_\beta(u)| 
	\leq 
	C |\gamma_u|^{q-1} \left(\intO |\nabla \varphi_p|^{p-2} |\nabla u^\perp|^2 \, dx\right)^\frac{1}{2} 
	+ 
	C \intO |\nabla u^\perp|^{q} \, dx.
	\end{equation}
	If, in addition, $p \geq 2q$ and $\gamma_u \neq 0$, then the following inequality is satisfied:
	\begin{equation}
	\label{eq:gest1}
	|G_\beta(u)| 
	\leq 
	C |\gamma_u|^{-\frac{p-2q}{2}} \left(|\gamma_u|^{p-2}\intO |\nabla \varphi_p|^{p-2} |\nabla u^\perp|^2 \, dx + \intO |\nabla u^\perp|^{p} \, dx \right)^\frac{1}{2}
	+
	C \intO |u^\perp|^{q} \, dx.
	\end{equation}
\end{lemma}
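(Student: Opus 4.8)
The plan is to expand $G_\beta$ near the ray $\mathbb{R}\varphi_p$, exploiting that $G_\beta$ is $q$-homogeneous and that $G_\beta(t\varphi_p)=|t|^q\|\varphi_p\|_q^q(\beta_*-\beta)\ge 0$ whenever $\beta\le\beta_*$. Write $u=\gamma\varphi_p+w$ with $\gamma:=\gamma_u$, $w:=u^\perp$, so $\intO w\varphi_p\,dx=0$. If $\gamma=0$ the claim is immediate, since $|G_\beta(u)|\le\beta\|w\|_q^q\le C\intO|\nabla w|^q\,dx$ by the Poincar\'e inequality; so assume $\gamma\neq0$. I would bound $\|\nabla u\|_q^q$ from below by the subgradient inequality for the convex map $\xi\mapsto|\xi|^q$ at $\gamma\nabla\varphi_p$, and $\|u\|_q^q$ from above by a first-order Taylor expansion with the elementary remainder bound $\bigl||a+b|^q-|a|^q-q|a|^{q-2}a\cdot b\bigr|\le C\bigl(|a|^{q-2}|b|^2+|b|^q\bigr)$ for $q\ge2$ (and $\le C|b|^q$ for $1<q<2$, via H\"older continuity of $\xi\mapsto|\xi|^{q-2}\xi$). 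Subtracting, discarding the leading contribution $-\,\|\varphi_p\|_q^q(\beta_*-\beta)\,|\gamma|^q\le0$ (and, when $\beta<\beta_*$, keeping it in reserve to absorb residual powers of $|\gamma|$ later), and using $G_\beta(u)<0$, one reaches
$$|G_\beta(u)|\le q|\gamma|^{q-1}\Bigl(\Bigl|\intO|\nabla\varphi_p|^{q-2}\nabla\varphi_p\cdot\nabla w\,dx\Bigr|+\beta\Bigl|\intO\varphi_p^{q-1}w\,dx\Bigr|\Bigr)+C\intO R(\gamma\varphi_p,w)\,dx,$$
where $\intO R\,dx\le C|\gamma|^{q-2}\|w\|_2^2+C\|w\|_q^q$ for $q\ge2$ and $\le C\|w\|_q^q$ for $1<q<2$.

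The heart of the matter is to estimate the two first-variation integrals by the degenerate weighted energy $E_w:=\intO|\nabla\varphi_p|^{p-2}|\nabla w|^2\,dx$. For the gradient term, $\bigl|\intO|\nabla\varphi_p|^{q-2}\nabla\varphi_p\cdot\nabla w\bigr|\le\intO|\nabla\varphi_p|^{q-1}|\nabla w|$; writing $|\nabla\varphi_p|^{q-1}|\nabla w|=|\nabla\varphi_p|^{(2q-p)/2}\bigl(|\nabla\varphi_p|^{(p-2)/2}|\nabla w|\bigr)$ and applying Cauchy--Schwarz bounds it by $\bigl(\intO|\nabla\varphi_p|^{2q-p}\bigr)^{1/2}E_w^{1/2}$, and since $|\nabla\varphi_p|\in L^\infty(\Omega)$ the first factor is finite when $2q\ge p$. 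When $2q<p$ I would interpolate instead via the three-exponent H\"older inequality among $|\nabla\varphi_p|^{p-2}|\nabla w|^2$, $|\nabla w|^p$, and the constant $1$, with exponents $\theta_1=\tfrac{q-1}{p-2}$, $\theta_2=\tfrac{p-2q}{p(p-2)}$, $\theta_3=\tfrac{p-q}{p}$ (nonnegative and summing to $1$ precisely because $p\ge2q$ and $p\ge q$), which yields $\intO|\nabla\varphi_p|^{q-1}|\nabla w|\le C\,E_w^{\theta_1}\|\nabla w\|_p^{p\theta_2}$; a Young inequality then rewrites this as $C E_w^{1/2}+C\|\nabla w\|_p$, the resulting power $1$ on $\|\nabla w\|_p$ being forced again by $p\ge2q$. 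For the zeroth-order term I would use $\intO w\varphi_p\,dx=0$ to replace $\varphi_p^{q-1}$ by $\varphi_p^{q-1}-c\varphi_p$ with $c=\|\varphi_p\|_q^q/\|\varphi_p\|_2^2$, and then invoke the uniquely solvable weighted linear problem $-\,\text{div}\bigl(|\nabla\varphi_p|^{p-2}\nabla h\bigr)=\varphi_p^{q-1}-c\varphi_p$ in $\Omega$ together with the weighted Poincar\'e inequality $\|\psi\|_{L^2(\Omega)}^2\le C\intO|\nabla\varphi_p|^{p-2}|\nabla\psi|^2\,dx$ (the analytic input where $p\ge2$ enters), obtaining $\bigl|\intO\varphi_p^{q-1}w\,dx\bigr|=\bigl|\intO|\nabla\varphi_p|^{p-2}\nabla h\cdot\nabla w\,dx\bigr|\le C E_w^{1/2}$; the same weighted Poincar\'e estimate also absorbs the $\|w\|_2$-contribution of the remainder.

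Assembling these bounds and using the Poincar\'e inequality ($p\ge q$, $p\ge 2$), Young's inequality, and—when $\beta<\beta_*$—the reserved nonpositive term to soak up the leftover powers of $|\gamma|$, one gets \eqref{eq:gest}. For \eqref{eq:gest1}, under the extra hypothesis $p\ge2q$ with $\gamma\neq0$, I would keep the sharper three-exponent bound of the gradient term and retain the $\|w\|_q^q=\intO|u^\perp|^q\,dx$ form coming directly from the Taylor remainder (rather than passing to $\intO|\nabla w|^q\,dx$ via Poincar\'e); then, writing $|\gamma|^{q-1}=|\gamma|^{-(p-2q)/2}\,|\gamma|^{(p-2)/2}$ and pulling $|\gamma|^{(p-2)/2}$ inside the square root to create $|\gamma|^{p-2}E_w$, while grouping the $\intO|\nabla w|^p\,dx$ contribution of the interpolation under the same root, reproduces exactly the right-hand side of \eqref{eq:gest1}.

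The main obstacle is precisely the passage from the first-variation integrals to the \emph{degenerate} weighted energy $E_w$: because $|\nabla\varphi_p|^{p-2}$ vanishes on the critical set of $\varphi_p$, no crude domination by $\|\nabla w\|_2$ or $\|\nabla w\|_q$ is available, which forces the exponent bookkeeping above—most delicately the dichotomy $2q\ge p$ versus $2q<p$ in the estimate of $\intO|\nabla\varphi_p|^{q-1}|\nabla w|$, and the weighted Poincar\'e inequality / solvability of the degenerate linear problem for $h$—and it is exactly here that the hypotheses $p\ge2$ and (for \eqref{eq:gest1}) $p\ge2q$ are used in an essential way.
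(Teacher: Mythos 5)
Your overall skeleton matches the paper's: decompose $u=\gamma_u\varphi_p+u^\perp$, use $G_\beta(\varphi_p)\ge 0$ for $\beta\le\beta_*$ together with $G_\beta(u)<0$ to reduce to first-variation integrals in $u^\perp$, and control those by the degenerate energy $E_w=\intO|\nabla\varphi_p|^{p-2}|\nabla u^\perp|^2\,dx$ via the Fleckinger--Tak\'a\v{c} weighted Poincar\'e machinery. (The paper uses the mean value theorem at an intermediate point rather than a second-order Taylor expansion at $\gamma_u\varphi_p$; this is slicker, since it keeps the zeroth-order contribution first order in $u^\perp$ and avoids the remainder $|\gamma_u|^{q-2}\|u^\perp\|_2^2$, which in your version needs an extra case analysis to fit the right-hand side. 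Your detour through the solvability of the degenerate linear problem for $h$ is also an unproven assertion where a direct bound $|\intO\varphi_p^{q-1}u^\perp\,dx|\le C\|u^\perp\|_2\le CE_w^{1/2}$ suffices.)

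The genuine gap is your treatment of the gradient term in \eqref{eq:gest} when $p>2q$. Your three-exponent H\"older plus Young yields $|\gamma_u|^{q-1}\intO|\nabla\varphi_p|^{q-1}|\nabla u^\perp|\,dx\le C|\gamma_u|^{q-1}E_w^{1/2}+C|\gamma_u|^{q-1}\|\nabla u^\perp\|_p$, and the second term does not appear on the right-hand side of \eqref{eq:gest} and cannot be absorbed there: $\|\nabla u^\perp\|_p$ is controlled neither by $E_w^{1/2}$ (the weight degenerates and the exponents mismatch) nor by $\|\nabla u^\perp\|_q^q$. Your proposed rescue --- absorbing leftovers into the reserved term $-(\beta_*-\beta)|\gamma_u|^q$ --- is unavailable precisely in the case that matters, namely $\beta=\beta_*$ (which is where the lemma is applied in the proof of Theorem \ref{thm:existence2}, under $p>2q$), and in any event a Young absorption would leave $\|\nabla u^\perp\|_p^q$, not $\|\nabla u^\perp\|_q^q$. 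The missing ingredient is the nontrivial integrability $\intO|\nabla\varphi_p|^{-(p-2q)}\,dx<+\infty$ for $p>2q$ (a consequence of the smallness of the critical set of $\varphi_p$, imported by the paper from \cite[p.~1234]{BobkovTanaka2017}); with it, the plain weighted Cauchy--Schwarz factorization $|\nabla\varphi_p|^{q-1}|\nabla u^\perp|=|\nabla\varphi_p|^{\frac{p-2}{2}}|\nabla u^\perp|\cdot|\nabla\varphi_p|^{\frac{2q-p}{2}}$ works in all ranges and gives exactly the term $C|\gamma_u|^{q-1}E_w^{1/2}$. Without this fact (or some substitute), your argument does not prove \eqref{eq:gest} in the regime where it is actually used.
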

\begin{proof}
	Let us fix any $u = \gamma_u \varphi_p + u^\perp \in \W$.
	Using the mean value theorem, we can find $\varepsilon \in (0,1)$ such that
	\begin{align}
	\notag
	0 > G_\beta(u) 
	&= 
	|\gamma_u|^{q} G_\beta(\varphi_p) + \langle G_\beta'(\gamma_u \varphi_p+\varepsilon u^\perp), u^\perp \rangle
	\geq
	\langle G_\beta'(\gamma_u \varphi_p+\varepsilon u^\perp), u^\perp \rangle \\
	\label{eq:estim:Inf:G}
	&\geq - q \intO |\nabla(\gamma_u \varphi_p+\varepsilon u^\perp)|^{q-1}|\nabla u^\perp| \, dx - 
	\beta q \intO |\gamma_u \varphi_p+\varepsilon u^\perp|^{q-1}|u^\perp| \, dx,
	\end{align}
	where $G_\beta(\varphi_p) \geq 0$ follows from the assumption $\beta \in (\lambda_1(q), \beta_*]$. 
	
	During the proof, we will denote by $C>0$ various constants 
independent of $u$. 
	To estimate \eqref{eq:estim:Inf:G}, we develop an approach from the proof of \cite[Proposition 11]{BobkovTanaka2017}.
	Let us start with the first summand in \eqref{eq:estim:Inf:G}. 
	Since $\varepsilon \in (0,1)$, we have
	$$
	\intO |\nabla(\gamma_u \varphi_p+\varepsilon u^\perp)|^{q-1}|\nabla u^\perp| \, dx 
	\leq 
	\intO \left(|\gamma_u| |\nabla \varphi_p|+ |\nabla u^\perp|\right)^{q-1}|\nabla u^\perp| \, dx.
	$$
	Then, using H\"older's inequality, we get
	\begin{align}
	\notag
	\intO &\left(|\gamma_u| |\nabla \varphi_p|+ |\nabla u^\perp|\right)^{q-1}|\nabla u^\perp| \, dx 
	\leq 
	C |\gamma_u|^{q-1} \intO |\nabla \varphi_p|^{q-1} |\nabla u^\perp| \, dx
	+
	C\intO |\nabla u^\perp|^q \, dx\\
	\notag
	&=	
	C|\gamma_u|^{q-1}\intO |\nabla \varphi_p|^\frac{p-2}{2} |\nabla u^\perp| \cdot |\nabla \varphi_p|^\frac{2q-p}{2} \, dx
	+
	C\intO |\nabla u^\perp|^q \, dx
	\\
	\label{eq:g4est}
	&\leq
	C |\gamma_u|^{q-1}\left(\intO |\nabla \varphi_p|^{p-2} |\nabla u^\perp|^2 \, dx\right)^\frac{1}{2} \left(\intO \frac{dx}{|\nabla \varphi_p|^{p-2q}}\right)^\frac{1}{2}
	+
	C\intO |\nabla u^\perp|^q \, dx.
	\end{align}
	Note that $\intO |\nabla \varphi_p|^{p-2} |\nabla u^\perp|^2 \, dx < +\infty$, since $\varphi_p \in C^{1}(\overline{\Omega})$ (see Remark \ref{rem:reg}) and $p \geq 2$. 
	Moreover, $\intO \frac{dx}{|\nabla \varphi_p|^{p-2q}} < +\infty$, too. Indeed, if $p \leq 2q$, then this boundedness easily follows from the regularity $\varphi_p \in C^{1}(\overline{\Omega})$, while in the case $p > 2q$ the desired boundedness was discussed in \cite[p.\ 1234]{BobkovTanaka2017}. 

	On the other hand, if $p \geq 2q$ and $\gamma_u \neq 0$, then, recalling that $\varepsilon \in (0,1)$ and using  H\"older's inequality again, we can estimate the first summand in \eqref{eq:estim:Inf:G} as follows:	
	\begin{align}
	\notag
	&\intO |\nabla(\gamma_u \varphi_p+\varepsilon u^\perp)|^{q-1}|\nabla u^\perp| \, dx 
	\leq 
	\intO \left(|\gamma_u| |\nabla \varphi_p|+ |\nabla u^\perp|\right)^{q-1}|\nabla u^\perp| \, dx
	\\
	\notag
	&=
	\intO \left(|\gamma_u| |\nabla \varphi_p|+ |\nabla u^\perp|\right)^\frac{p-2}{2}|\nabla u^\perp| \cdot \left(|\gamma_u| |\nabla \varphi_p|+ |\nabla u^\perp|\right)^\frac{2q-p}{2} \, dx
	\\
	\notag
	&\leq 
	\left(\intO \left(|\gamma_u| |\nabla \varphi_p|+ |\nabla u^\perp|\right)^{p-2}|\nabla u^\perp|^2 \, dx\right)^\frac{1}{2}
	\left(\intO \frac{dx}{\left(|\gamma_u| |\nabla \varphi_p|+ |\nabla u^\perp|\right)^{p-2q}} \right)^\frac{1}{2} 
	\\
	\label{eq:g1est}
	&\leq
	C \left(|\gamma_u|^{p-2}\intO |\nabla \varphi_p|^{p-2} |\nabla u^\perp|^2 \, dx + \intO |\nabla u^\perp|^{p} \, dx \right)^\frac{1}{2}
	\frac{1}{|\gamma_u|^\frac{p-2q}{2}}\left(\intO \frac{dx}{|\nabla \varphi_p|^{p-2q}}\right)^\frac{1}{2}.
	\end{align}
	
	Let us now estimate the second term in \eqref{eq:estim:Inf:G}. 
	Since $\varepsilon \in (0,1)$ and $\|\varphi_p\|_{\infty} < +\infty$ (see Remark \ref{rem:reg}), we have 
	\begin{align*}
	\intO |\gamma_u \varphi_p+\varepsilon u^\perp|^{q-1}|u^\perp| \, dx
	\leq
	C |\gamma_u|^{q-1} \intO |u^\perp| \, dx + C \intO |u^\perp|^{q} \, dx.
	\end{align*}
	Thus, using H\"older's inequality and an embedding result of \cite[Lemma 4.2]{takac}, we get
	\begin{equation}
	\label{eq:g2est}
	\intO |\gamma_u \varphi_p + \varepsilon u^\perp|^{q-1}|u^\perp| \, dx 
	\leq
	C |\gamma_u|^{-\frac{p-2q}{2}} \left(|\gamma_u|^{p-2}\intO |\nabla \varphi_p|^{p-2} |\nabla u^\perp|^2 \, dx\right)^\frac{1}{2} + C \intO |u^\perp|^{q} \, dx.
	\end{equation}
	Finally, combining \eqref{eq:g2est} with \eqref{eq:g1est}, we obtain \eqref{eq:gest1}. 
	To obtain \eqref{eq:gest}, we apply the Sobolev embedding theorem to estimate the last summand in \eqref{eq:g2est}, and then combine \eqref{eq:g2est} with \eqref{eq:g4est}.
\end{proof}

\begin{proof}[Proof of Theorem \ref{thm:existence2}]
	We will show that, under the imposed assumptions, $E_{\lambda_1(p),\beta_*}$ attains a global minimum. 
	If $f \equiv 0$, then the existence of a global minimizer is given by \cite[Theorem 2.6 (ii)]{BobkovTanaka2017}. Thus, we may assume that $f \not\equiv 0$. 
	Let $\{u_n\} \subset \W$ be a minimizing sequence for $E_{\lambda_1(p),\beta_*}$. 
	It is not hard to see that each $E_{\lambda_1(p),\beta_*}(u_n) < 0$. Indeed, fixing an arbitrary $u \in \W$ satisfying $\langle f,u\rangle
 > 0$, and recalling that $p>q>1$, we can find a sufficiently small $t>0$ such that
	$$
	E_{\lambda_1(p),\beta_*}(t u) = \frac{t^p}{p}H_{\lambda_1(p)}(u) + \frac{t^q}{q} G_{\beta_*}(u) - t \langle f,u\rangle
 < 0.
	$$
	Let us prove that $\{u_n\}$ is bounded in $\W$.
	Suppose, by contradiction, that $\|\nabla u_n\|_p \to +\infty$ as $n \to +\infty$, up to a subsequence. 
	Making the $L^2(\Omega)$-decomposition $u_n = \gamma_n \varphi_p + u_n^\perp$ (see \eqref{eq:l2dec}), we conclude that $|\gamma_n| \to +\infty$ or $\|\nabla u_n^\perp\|_p \to +\infty$ as $n \to +\infty$. 
	To reach a contradiction, let us estimate $E_{\lambda_1(p),\beta_*}(u_n)$ from below. 
	First, to estimate $H_{\lambda_1(p)}(u_n)$, we use the improved Poincar\'e inequality obtained in \cite{takac}, which  states that 
	\begin{equation}
	\label{eq:improved-poincare}
	H_{\lambda_1(p)}(u_n) \geq 
	C |\gamma_n|^{p-2} \intO |\nabla \varphi_p|^{p-2} |\nabla u_n^\perp|^2 \, dx + C \|\nabla u_n^\perp\|_p^p.
	\end{equation}
	Hereinafter, $C>0$ is a constant independent of $n \in \mathbb{N}$.
	Second, to estimate $\langle f,u_n\rangle$, we recall that $f \in L^2(\Omega)$, $\langle f, \varphi_p\rangle = 0$, and $p > 2q > 2$, which yields
	\begin{equation}
	\label{eq:estim-f}
	\left|\langle f,u_n\rangle\right|=\left|\int_\Omega f u_n \, dx\right|
	= 
	\left|\int_\Omega f u_n^\perp \, dx\right|
	\leq \|f\|_{2} \|u_n^\perp\|_{2} \leq C \|f\|_{2} \|\nabla u_n^\perp\|_{p}.
	\end{equation}
	Finally, in order to apply Lemma \ref{lem:estimate:G} for estimating $G_{\beta_*}(u_n)$, let us show that we may assume $G_{\beta_*}(u_n) < 0$ for all $n \in \mathbb{N}$. Indeed, if $G_{\beta_*}(u_n) \geq 0$ for all $n$, up to a subsequence, then, using the estimates \eqref{eq:improved-poincare} and \eqref{eq:estim-f}, we get
	\begin{equation}\label{eq:estE0}
	E_{\lambda_1(p),\beta_*}(u_n)
	\geq 
	C |\gamma_n|^{p-2} \intO |\nabla \varphi_p|^{p-2} |\nabla u_n^\perp|^2 \, dx
	+ 
	C \|\nabla u_n^\perp\|_p^p
	- 
	C \|f\|_{2} \|\nabla u_n^\perp\|_{p}.
	\end{equation}
	If $\|\nabla u_n^\perp\|_p \to +\infty$ as $n \to +\infty$, then $E_{\lambda_1(p),\beta_*}(u_n) \to +\infty$ regardless the behavior of $\{\gamma_n\}$, which contradicts the minimization property of $\{u_n\}$. Hence, $\{u_n^\perp\}$ is bounded in $\W$. 
	Therefore, since $\|\nabla u_n\|_p \to +\infty$ as $n \to +\infty$, we have $|\gamma_n| \to +\infty$. 
	We see from \eqref{eq:estE0} that if $|\gamma_n|^{p-2}\intO |\nabla \varphi_p|^{p-2} |\nabla u_n^\perp|^2 \, dx \to +\infty$ as $n \to +\infty$, up to a subsequence, then $E_{\lambda_1(p),\beta_*}(u_n) \to +\infty$, which is again impossible. This implies $\intO |\nabla \varphi_p|^{p-2} |\nabla u_n^\perp|^2 \, dx \to 0$. 
	In view of the embedding result \cite[Lemma 4.2]{takac}, we get $\|u_n^\perp\|_2 \to 0$, and hence \eqref{eq:estim-f} yields $\langle f,u_n\rangle \to 0$. Thus, we conclude from \eqref{eq:estE0} and the behavior of $\langle f,u_n\rangle$ that $E_{\lambda_1(p),\beta_*}(u_n) \geq 0 + o(1)$ as $n \to +\infty$, which contradicts the minimization property of $\{u_n\}$ and the fact that $E_{\lambda_1(p),\beta_*}(u_n) < 0$ for each $n \in \mathbb{N}$. Thus, $G_{\beta_*}(u_n) < 0$ for all $n \in \mathbb{N}$.
	
	Substituting now \eqref{eq:improved-poincare}, \eqref{eq:estim-f}, and the estimate \eqref{eq:gest} for $G_{\beta_*}(u_n)$ (combined with H\"older's inequality) into $E_{\lambda_1(p),\beta_*}(u_n)$, we get
	\begin{align}
	\notag
	E_{\lambda_1(p),\beta_*}(u_n) 
	&\geq 
	C |\gamma_n|^{p-2} \intO |\nabla \varphi_p|^{p-2} |\nabla u_n^\perp|^2 \, dx 
	+ 
	C \|\nabla u_n^\perp\|_p^p
	\\
	\label{eq:estE}
	&-
	C |\gamma_n|^{q-1} \left(\intO |\nabla \varphi_p|^{p-2} |\nabla u_n^\perp|^2 \, dx\right)^\frac{1}{2} 
	-
	C \|\nabla u_n^\perp\|_p^q
	-
	C \|f\|_{2} \|\nabla u_n^\perp\|_{p}.
	\end{align}
	Let us consider the following three possible cases.
	
	1. $\|\nabla u_n^\perp\|_p \to +\infty$ and $|\gamma_n| \to +\infty$ as $n \to +\infty$. 
	Using Young's inequality, we have
	\begin{align}
	\notag
	|\gamma_n|^{q-1} \left(\intO |\nabla \varphi_p|^{p-2} |\nabla u_n^\perp|^2 \, dx\right)^\frac{1}{2} 
	&= 
	\varepsilon |\gamma_n|^\frac{p-2}{2} \left(\intO |\nabla \varphi_p|^{p-2} |\nabla u_n^\perp|^2 \, dx\right)^\frac{1}{2} \cdot \frac{1}{\varepsilon} |\gamma_n|^\frac{2q-p}{2}\\
	\label{eq:young}
	&\leq
	\varepsilon^2 |\gamma_n|^{p-2} \intO |\nabla \varphi_p|^{p-2} |\nabla u_n^\perp|^2 \, dx
	+
	\frac{1}{\varepsilon^2} |\gamma_n|^{2q-p}
	\end{align}
	for any $\varepsilon>0$. 
	Substituting \eqref{eq:young} into \eqref{eq:estE}, we obtain
	\begin{align}
	\notag
	E_{\lambda_1(p),\beta_*}(u_n) 
	&\geq (C - C\varepsilon^2) |\gamma_n|^{p-2} \intO |\nabla \varphi_p|^{p-2} |\nabla u_n^\perp|^2 \, dx 
	+
	C \|\nabla u_n^\perp\|_p^p
	\\
	\label{eq:estE2}
	&-
	\frac{C}{\varepsilon^2} |\gamma_n|^{2q-p}
	-
	C \|\nabla u_n^\perp\|_p^q
	-
	C \|f\|_{2} \|\nabla u_n^\perp\|_{p}.
	\end{align}
	Taking $\varepsilon>0$ small enough and recalling that $p > 2q$, we easily conclude that $E_{\lambda_1(p),\beta_*}(u_n) \to +\infty$ as $n \to +\infty$, which is impossible since $\{u_n\}$ is a minimizing sequence. 
	
	2. $\|\nabla u_n^\perp\|_p \to +\infty$ as $n \to +\infty$ and $\{\gamma_n\} $ is bounded.
	Since $\varphi_p \in C^1(\overline{\Omega})$ (see Remark \ref{rem:reg}) and $p>2$, we have
	$$
	|\gamma_n|^{q-1} \left(\intO |\nabla \varphi_p|^{p-2} |\nabla u_n^\perp|^2 \, dx\right)^\frac{1}{2} 
	\leq 
	C |\gamma_n|^{q-1} \|\nabla u_n^\perp\|_2 \leq C |\gamma_n|^{q-1} \|\nabla u_n^\perp\|_p,
	$$
	and hence, recalling that $p>q>1$, we obtain from \eqref{eq:estE} that
	$$
	E_{\lambda_1(p),\beta_*}(u_n) 
	\geq
	C \|\nabla u_n^\perp\|_p^p
	-
	C |\gamma_n|^{q-1} \|\nabla u_n^\perp\|_p
	-
	C \|\nabla u_n^\perp\|_p^q
	-
	C \|f\|_{2} \|\nabla u_n^\perp\|_{p} \to +\infty
	$$
	as $n \to +\infty$, a contradiction. 	
	
	3. $\{\|\nabla u_n^\perp\|_p\}$ is bounded and $|\gamma_n| \to +\infty$ as $n \to +\infty$.
	In this case, $\{u_n^\perp\}$ converges weakly in $\W$ and strongly in $L^r(\Omega)$, $r \in (1,p^*)$, to some $u_0^\perp \in \W$, up to a subsequence.
	Recalling that $p>2q$, we see from the estimate \eqref{eq:estE2} that if $|\gamma_n|^{p-2}\intO |\nabla \varphi_p|^{p-2} |\nabla u_n^\perp|^2 \, dx \to +\infty$ as $n \to +\infty$, up to a subsequence, then $E_{\lambda_1(p),\beta_*}(u_n) \to +\infty$, which is impossible. This implies $\intO |\nabla \varphi_p|^{p-2} |\nabla u_n^\perp|^2 \, dx \to 0$. 
	In view of the embedding result \cite[Lemma 4.2]{takac}, we get $\|u_n^\perp\|_2 \to 0$, which yields $u_0^\perp \equiv 0$ and hence $\|u_n^\perp\|_q \to 0$ due to the strong convergence of $\{u_n^\perp\}$ in $L^r(\Omega)$, $r \in (1,p^*)$.
	Consequently, we see from \eqref{eq:gest1} and \eqref{eq:estim-f} that
	$$
	|G_{\beta_*}(u_n)| \to 0 
	\quad\text{and}\quad
	\left|\langle f,u_n\rangle\right| \to 0
	\quad\text{as }
	n \to +\infty.
	$$
	Thus, 
	$$
	E_{\lambda_1(p),\beta_*}(u_n) = H_{\lambda_1(p)}(u_n) + o(1) \geq o(1)
	\quad\text{as }
	n \to +\infty,
	$$
	which contradicts the facts that each $E_{\lambda_1(p),\beta_*}(u_n) < 0$ and $\{u_n\}$ is a minimizing sequence for $E_{\lambda_1(p),\beta_*}$. 
	
	Therefore, we conclude that $\{u_n\}$ is bounded in $\W$, and hence $\inf_{\W} E_{\lambda_1(p),\beta_*}
=E_{\lambda_1(p),\beta_*}(u_n)+o(1)>-\infty$. Moreover, since $E_{\lambda_1(p),\beta_*}$ is weakly lower-semicontinuous, we see that, up to a subsequence, $\{u_n\}$ converges strongly in $\W$ to a global minimizer $u$ of $E_{\lambda_1(p),\beta_*}$, and hence $u$ is a critical point of $E_{\lambda_1(p),\beta_*}$.	
\end{proof}

\section{Proofs. Sign properties}\label{sec:proofs_sign}
In this section, we prove the results stated in Section \ref{subsec:sign}.
We start with the following auxiliary lemma which will be employed several times. 
\begin{lemma}\label{lem:weak}
	Let $\alpha > \lambda_1(p)$. Assume that $u \in \W$ satisfies $H_\alpha(u) < 0$. 
	Then there exists a sequence $\{u_n\} \subset \W$ such that $H_\alpha(u_n) = 0$ for all $n \in \mathbb{N}$, and $u_n \to u$ weakly in $\W$ and strongly in $W_0^{1,q}$ as $n \to +\infty$. 
	Moreover, if $u \geq 0$ a.e.\ in $\Omega$, then $\{u_n\}$ can be chosen such that $u_n \geq 0$ a.e.\ in $\Omega$ for all $n \in \mathbb{N}$.
\end{lemma}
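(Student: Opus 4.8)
The plan is to produce $u_n$ by adding to $u$ a concentrating bump of small support whose amplitude is tuned so that the $p$-homogeneous quantity $H_\alpha$ becomes zero. Fix $x_0\in\Omega$ and $r>0$ with $\overline{B_r(x_0)}\subset\Omega$, and fix a nonnegative $\psi\in C_c^\infty(B_1(0))$ with $\psi\not\equiv 0$ (so that $\|\nabla\psi\|_p>0$). For $\varepsilon\in(0,r)$ set
\[
w_\varepsilon(x):=\varepsilon^{\,1-N/p}\,\psi\!\left(\frac{x-x_0}{\varepsilon}\right)\in C_c^\infty(\Omega)\subset\W.
\]
A change of variables gives $\|\nabla w_\varepsilon\|_p^p=\|\nabla\psi\|_p^p$ (independent of $\varepsilon$), $\|w_\varepsilon\|_p^p=\varepsilon^{p}\|\psi\|_p^p$, $\|\nabla w_\varepsilon\|_q^q=\varepsilon^{N(p-q)/p}\|\nabla\psi\|_q^q$ and $\|w_\varepsilon\|_q^q=\varepsilon^{q+N(p-q)/p}\|\psi\|_q^q$. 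Since $p>q$, these relations show that $w_\varepsilon\to 0$ strongly in $W_0^{1,q}$ and in $L^p(\Omega)$ as $\varepsilon\to 0$, whereas $\{w_\varepsilon\}$ is bounded in $\W$.

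I would then fix $\varepsilon$ and study the continuous map $g_\varepsilon(t):=H_\alpha(u+tw_\varepsilon)$ on $[0,\infty)$; note $g_\varepsilon(0)=H_\alpha(u)<0$. Using $\|\nabla(u+tw_\varepsilon)\|_p\ge t\|\nabla\psi\|_p-\|\nabla u\|_p$ and $\|u+tw_\varepsilon\|_p\le\|u\|_p+t\varepsilon\|\psi\|_p$ together with the elementary inequality $(a+b)^p\le 2^{p-1}(a^p+b^p)$ for $a,b\ge 0$, one obtains constants $t_1>0$, $c_0>0$, $C>0$ and $\varepsilon_0\in(0,r)$ — with $\varepsilon_0$ small enough that the $\varepsilon$-dependent term $\alpha 2^{p-1}t^p\varepsilon^{p}\|\psi\|_p^p$ is absorbed into the $p$-Dirichlet contribution — such that
\[
g_\varepsilon(t)\ \ge\ c_0\,t^{p}-C\qquad\text{for all }\varepsilon\in(0,\varepsilon_0]\text{ and }t\ge t_1 .
\]
Consequently there is $t_*>t_1$, independent of $\varepsilon\le\varepsilon_0$, with $g_\varepsilon(t_*)>0$, and the intermediate value theorem yields $t_\varepsilon\in(0,t_*)$ with $g_\varepsilon(t_\varepsilon)=0$.

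Finally I would pick any $\varepsilon_n\downarrow 0$ with $\varepsilon_n\le\varepsilon_0$ and set $u_n:=u+t_{\varepsilon_n}w_{\varepsilon_n}$. By construction $H_\alpha(u_n)=g_{\varepsilon_n}(t_{\varepsilon_n})=0$ for every $n$. Since $0\le t_{\varepsilon_n}<t_*$ and $w_{\varepsilon_n}\to 0$ in $W_0^{1,q}$, we have $u_n\to u$ strongly in $W_0^{1,q}$; since $\|\nabla u_n\|_p\le\|\nabla u\|_p+t_*\|\nabla\psi\|_p$, the sequence $\{u_n\}$ is bounded in $\W$; and since $\|w_{\varepsilon_n}\|_p\to 0$, we have $u_n\to u$ strongly in $L^p(\Omega)$. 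Boundedness in the reflexive space $\W$ together with $u_n\to u$ in $L^p(\Omega)$ forces every weakly convergent subsequence of $\{u_n\}$ to have limit $u$, hence $u_n\rightharpoonup u$ in $\W$. If moreover $u\ge 0$ a.e.\ in $\Omega$, then $\psi\ge 0$ and $t_{\varepsilon_n}\ge 0$ give $u_n=u+t_{\varepsilon_n}w_{\varepsilon_n}\ge u\ge 0$ a.e., which is the final assertion.

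The step I expect to be the main obstacle is the $\varepsilon$-uniform bound $t_\varepsilon<t_*$ on the amplitudes: it is exactly what keeps $\{u_n\}$ bounded in $\W$, without which the weak convergence $u_n\rightharpoonup u$ would break down. The reason it holds is that the energy of $u$ over the shrinking balls $B_\varepsilon(x_0)$ is negligible, so once $\varepsilon$ is fixed small the $p$-growth in $t$ of $H_\alpha(u+tw_\varepsilon)$ dominates the contribution of $u$, giving coercivity with constants independent of $\varepsilon$.
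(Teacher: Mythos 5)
Your proof is correct, and it is essentially the same concentrating-bump construction that the paper invokes: the paper's proof is a one-line reference to the argument of \cite[Theorem 2.5 (ii)]{BobkovTanaka2017} (applied with $u$ in place of $\varphi_p$), which perturbs the given function by rescaled compactly supported bumps whose $W_0^{1,p}$-energy stays bounded away from zero while their $W_0^{1,q}$- and $L^p$-norms vanish, and then tunes the amplitude by the intermediate value theorem so that $H_\alpha$ vanishes. Your write-up supplies exactly those details, including the $\varepsilon$-uniform bound on the amplitude needed for boundedness in $\W$, so nothing is missing.
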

\begin{proof}
	The claims can be obtained arguing in much the same way as in the proof of \cite[Theorem 2.5 (ii)]{BobkovTanaka2017} by considering $u$ instead of $\varphi_p$.
\end{proof}

Now we provide several properties of the critical value $\beta_f(\alpha)$ defined by \eqref{beta_f}. Recall that the functional $\Phi_\alpha^+$ and the set $\mathcal{B}^+(\alpha)$ are defined by \eqref{def:Phi} and \eqref{def:B^+}, respectively.

\begin{proposition}\label{lem:prop:beta_f}
	Let $f \in W^{-1,q'}(\Omega) \setminus \{0\}$ and $f \geq 0$ in the weak sense. 
	Let $\mathrm{supp}\, f$ be the support of the distribution $f$. 
	Then the following assertions are satisfied:
	\begin{enumerate}[label={\rm(\roman*)}]
		\item\label{lem:prop:beta_f:1} $\Phi_\alpha^+$ is 0-homogeneous and $\mathcal{B}^+(\alpha) \neq \emptyset$ for all $\alpha \in \mathbb{R}$; 
		\item\label{lem:prop:beta_f:2} if $\alpha < \alpha_*$, then $\beta_f(\alpha) \geq \lambda_1(q)$, and $\beta_f(\alpha) > \lambda_1(q)$ if and only if $\langle f, \varphi_q \rangle > 0$;
		\item\label{lem:prop:beta_f:3} if $\alpha \geq \alpha_*$, then $\beta_f(\alpha) = \lambda_1(q)$;
		\item\label{lem:prop:beta_f:4} if $\alpha \geq \lambda_1(p)$, then $\beta_f(\alpha) \leq \beta_*$;
		\item\label{lem:prop:beta_f:5} $\beta_f$ is nonincreasing and left-continuous;
		\item\label{lem:prop:beta_f:9} if there is a nonempty open set $\widetilde{\Omega} \subset \Omega \setminus \mathrm{supp}\, f$, then 
		$\sup \{\beta_f(\alpha): \alpha\le \lambda_1(p)\} < +\infty$.
	\end{enumerate}
	If, in addition, $\langle f, v \rangle >0$ for any $v \in W_0^{1,q} \setminus \{0\}$ with $v\ge 0$ a.e.\ in $\Omega$, 
then the following assertions are satisfied:
	\begin{enumerate}[label={\rm(\roman*)}]\setcounter{enumi}{6}
		\item\label{lem:prop:beta_f:7} $\beta_f$ is continuous in $(-\infty,\lambda_1(p))$; 
		\item\label{lem:prop:beta_f:8} $\beta_f(\alpha) \to +\infty$ as $\alpha\to-\infty$. 
	\end{enumerate}
\end{proposition}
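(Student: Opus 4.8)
The plan is to treat the eight assertions in turn, using two recurring tools: explicit test functions (built from $\varphi_p$, $\varphi_q$, a concentrating bump, or a bump supported off $\mathrm{supp}\,f$) for the upper bounds on $\beta_f$, and weak-compactness arguments for the lower bounds and the (semi)continuity statements. Item \ref{lem:prop:beta_f:1} is a direct computation: under $u\mapsto tu$ the first summand of $\Phi_\alpha^+$ is invariant, while $H_\alpha(tu)^{(q-1)/(p-1)}\langle f,tu\rangle^{(p-q)/(p-1)}=t^{q}\,H_\alpha(u)^{(q-1)/(p-1)}\langle f,u\rangle^{(p-q)/(p-1)}$ (since $\tfrac{p(q-1)+(p-q)}{p-1}=q$), so the second summand is $0$-homogeneous too; and $\mathcal{B}^+(\alpha)\neq\emptyset$ because a nonnegative bump rescaled to concentrate has Rayleigh quotient $\|\nabla u\|_p^p/\|u\|_p^p$ as large as we like, so $H_\alpha(u)\ge0$ can always be met. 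The bound $\beta_f(\alpha)\ge\lambda_1(q)$ used everywhere is immediate since $\Phi_\alpha^+(u)\ge\|\nabla u\|_q^q/\|u\|_q^q\ge\lambda_1(q)$ (the mixed term is nonnegative: $H_\alpha(u)\ge0$ on $\mathcal{B}^+(\alpha)$ and $\langle f,u\rangle\ge0$ for $u\ge0$). One structural observation will drive the continuity claims: for $\alpha\le\lambda_1(p)$ the set $\mathcal{B}^+(\alpha)=\{u\in\W\setminus\{0\}:u\ge0\text{ a.e.}\}$ is \emph{the same} for all such $\alpha$ (Poincar\'e forces $H_\alpha(u)\ge0$), and only $\Phi_\alpha^+$ depends on $\alpha$, affinely so through $H_\alpha$.

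For the upper bounds I would use test functions. If $\alpha=\alpha_*$ then $H_{\alpha_*}(\varphi_q)=0$, so $\varphi_q\in\mathcal{B}^+(\alpha_*)$ and $\Phi_{\alpha_*}^+(\varphi_q)=\lambda_1(q)$; if $\alpha>\alpha_*$ then $H_\alpha(\varphi_q)<0$ and $\alpha>\lambda_1(p)$, so Lemma~\ref{lem:weak} yields nonnegative $u_n$ with $H_\alpha(u_n)=0$ and $u_n\to\varphi_q$ in $W_0^{1,q}$, whence $\Phi_\alpha^+(u_n)=\|\nabla u_n\|_q^q/\|u_n\|_q^q\to\lambda_1(q)$; this proves \ref{lem:prop:beta_f:3}. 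The same scheme with $\varphi_p$ in place of $\varphi_q$, using $H_{\lambda_1(p)}(\varphi_p)=0$ and $\|\nabla\varphi_p\|_q^q/\|\varphi_p\|_q^q=\beta_*$ (see \eqref{eq:alpha_*}), proves \ref{lem:prop:beta_f:4}. For \ref{lem:prop:beta_f:2}, when $\langle f,\varphi_q\rangle=0$ and $\alpha<\alpha_*$ the function $\varphi_q$ lies in $\mathcal{B}^+(\alpha)$ and $\Phi_\alpha^+(\varphi_q)=\lambda_1(q)$, so $\beta_f(\alpha)=\lambda_1(q)$. For \ref{lem:prop:beta_f:9} take $v\in C_c^\infty(\widetilde{\Omega})$, $v\ge0$, $v\not\equiv0$: then $\langle f,v\rangle=0$ by disjointness of supports, $v\in\mathcal{B}^+(\alpha)$ for every $\alpha\le\lambda_1(p)$, and $\Phi_\alpha^+(v)=\|\nabla v\|_q^q/\|v\|_q^q$ independently of $\alpha$, giving the uniform bound.

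The lower bound in \ref{lem:prop:beta_f:2} (for $\langle f,\varphi_q\rangle>0$, $\alpha<\alpha_*$) and the continuity claims go by contradiction with minimizing sequences. If $\beta_f(\alpha)=\lambda_1(q)$, pick $u_n\in\mathcal{B}^+(\alpha)$, $\|u_n\|_q=1$, with $\Phi_\alpha^+(u_n)\to\lambda_1(q)$; nonnegativity of both summands forces $\|\nabla u_n\|_q^q\to\lambda_1(q)$ (so $u_n\to c\varphi_q:=\varphi_q/\|\varphi_q\|_q$ strongly in $W_0^{1,q}$, by simplicity and the sign constraint) and $H_\alpha(u_n)^{(q-1)/(p-1)}\langle f,u_n\rangle^{(p-q)/(p-1)}\to0$; since $\langle f,u_n\rangle\to c\langle f,\varphi_q\rangle>0$, this gives $H_\alpha(u_n)\to0$. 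One checks $\{u_n\}$ is bounded in $\W$ --- otherwise $w_n:=u_n/\|\nabla u_n\|_p$ would satisfy $\alpha\|w_n\|_p^p\to1$, i.e.\ it would have a nonzero $L^p$-limit while $w_n\to0$ in $L^q$ --- and then weak lower semicontinuity in $\W$ together with $L^p$-compactness yields $0=\lim H_\alpha(u_n)\ge H_\alpha(c\varphi_q)=c^p\|\varphi_q\|_p^p(\alpha_*-\alpha)>0$, a contradiction. For \ref{lem:prop:beta_f:5}, to see that $\beta_f$ is nonincreasing fix $\alpha_1<\alpha_2$ and $u\in\mathcal{B}^+(\alpha_1)$: if $H_{\alpha_2}(u)\ge0$ then $u\in\mathcal{B}^+(\alpha_2)$ and $\Phi_{\alpha_2}^+(u)\le\Phi_{\alpha_1}^+(u)$ (as $0\le H_{\alpha_2}(u)\le H_{\alpha_1}(u)$ and $0<\tfrac{q-1}{p-1}<1$); if $H_{\alpha_2}(u)<0$ (which forces $\alpha_2>\lambda_1(p)$), Lemma~\ref{lem:weak} gives nonnegative $u_n\to u$ in $W_0^{1,q}$ with $H_{\alpha_2}(u_n)=0$, whence $\beta_f(\alpha_2)\le\|\nabla u\|_q^q/\|u\|_q^q\le\Phi_{\alpha_1}^+(u)$; taking the infimum over $u$ gives $\beta_f(\alpha_2)\le\beta_f(\alpha_1)$. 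Left-continuity follows since any near-minimizer $u$ for $\beta_f(\alpha_0)$ also lies in $\mathcal{B}^+(\alpha)$ for all $\alpha<\alpha_0$ (indeed $H_\alpha(u)=H_{\alpha_0}(u)+(\alpha_0-\alpha)\|u\|_p^p\ge0$) and $\Phi_\alpha^+(u)\to\Phi_{\alpha_0}^+(u)$ as $\alpha\uparrow\alpha_0$, so $\limsup_{\alpha\uparrow\alpha_0}\beta_f(\alpha)\le\Phi_{\alpha_0}^+(u)$.

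The last two items invoke the extra hypothesis $\langle f,v\rangle>0$ for all nonnegative $v\in W_0^{1,q}\setminus\{0\}$; its role is to provide \emph{uniform positivity} of $\langle f,u_n\rangle$ along sequences bounded in $W_0^{1,q}$ (any weak subsequential limit is nonnegative and, after normalizing $\|u_n\|_q=1$, nonzero, hence pairs positively with $f$). For \ref{lem:prop:beta_f:8}: if $\beta_f\le M$ one could choose $u_\alpha$ with $\|u_\alpha\|_q=1$, $\|\nabla u_\alpha\|_q^q<M+1$ and $H_\alpha(u_\alpha)^{(q-1)/(p-1)}\langle f,u_\alpha\rangle^{(p-q)/(p-1)}$ bounded; but $H_\alpha(u_\alpha)\ge|\alpha|\,\|u_\alpha\|_p^p\ge c|\alpha|\to+\infty$ as $\alpha\to-\infty$ would force $\langle f,u_\alpha\rangle\to0$, contradicting uniform positivity. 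For \ref{lem:prop:beta_f:7}, since $\beta_f$ is already nonincreasing and left-continuous it suffices to prove lower semicontinuity at each $\alpha_0<\lambda_1(p)$: for $\alpha_n\to\alpha_0$, $\beta_f(\alpha_n)\to\ell$, and near-minimizers $u_n\in\mathcal{B}^+$ (the fixed set), uniform positivity of $\langle f,u_n\rangle$ and the bound on the mixed term make $H_{\alpha_n}(u_n)$ bounded, and since $\alpha_0<\lambda_1(p)$ gives the coercivity estimate $H_{\alpha_n}(u_n)\ge c\|\nabla u_n\|_p^p$ eventually, $\{u_n\}$ is bounded in $\W$; passing to a weak limit $u_0\in\mathcal{B}^+$ and using weak lower semicontinuity and $L^p$-compactness yields $\beta_f(\alpha_0)\le\Phi_{\alpha_0}^+(u_0)\le\liminf\Phi_{\alpha_n}^+(u_n)\le\ell$. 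I expect the genuine obstacle to be precisely this promotion of $W_0^{1,q}$-boundedness of minimizing sequences to $\W$-boundedness, in \ref{lem:prop:beta_f:7} and in \ref{lem:prop:beta_f:2}: it requires both the uniform lower bound on $\langle f,u_n\rangle$ --- which is exactly where the hypothesis of \ref{lem:prop:beta_f:7}--\ref{lem:prop:beta_f:8} is essential --- and the coercivity of $H_{\alpha_n}$ available only when $\alpha_n$ stays bounded away from $\lambda_1(p)$ from below.
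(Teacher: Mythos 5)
Your proposal is correct and follows essentially the same route as the paper: the same test functions ($\varphi_q$ for \ref{lem:prop:beta_f:2}--\ref{lem:prop:beta_f:3}, $\varphi_p$ for \ref{lem:prop:beta_f:4}, a bump off $\mathrm{supp}\,f$ for \ref{lem:prop:beta_f:9}), Lemma \ref{lem:weak} whenever $H_\alpha<0$, and normalized minimizing sequences with weak compactness for \ref{lem:prop:beta_f:2}, \ref{lem:prop:beta_f:5}, \ref{lem:prop:beta_f:7}, \ref{lem:prop:beta_f:8}. The only deviations are cosmetic: you replace the paper's citation of \cite[Lemma 9]{T-2014} for promoting $W_0^{1,q}$-bounds to $\W$-bounds by a self-contained normalization/Rellich argument (in \ref{lem:prop:beta_f:2}) and by direct coercivity of $H_{\alpha_n}$ (valid in \ref{lem:prop:beta_f:7} since $\alpha_0<\lambda_1(p)$), and you prove \ref{lem:prop:beta_f:4} directly rather than via monotonicity.
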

\begin{proof}
	Assertion \ref{lem:prop:beta_f:1}. The $0$-homogeneity of $\Phi_\alpha^+$ is trivial. 
	To obtain $\mathcal{B}^+(\alpha) \neq \emptyset$ we consider an eigenfunction $u$ of the $p$-Laplacian associated with some eigenvalue $\lambda_k(p) > \max\{\alpha,\lambda_1(p)\}$. Then $u^+ \not\equiv 0$ and $H_\alpha(u^+)=(\lambda_k(p)-\alpha)\|u^+\|_p^p>0$, which yields $u^+\in\mathcal{B}^+(\alpha)$. 
	
	Assertion \ref{lem:prop:beta_f:2}. The inequality $\beta_f(\alpha) \geq \lambda_1(q)$ is trivial. Moreover, if $\langle f, \varphi_q \rangle = 0$, then $\beta_f(\alpha) = \lambda_1(q)$ by the definition \eqref{beta_f} of $\beta_f(\alpha)$ since $\varphi_q \in \mathcal{B}^+(\alpha)$. Let $\langle f, \varphi_q \rangle > 0$ and suppose, by contradiction, that there is some $\alpha < \alpha_*$ such that $\beta_f(\alpha) = \lambda_1(q)$. Then, in view of assertion \ref{lem:prop:beta_f:1}, we can find a minimizing sequence 
	$\{u_n\}\subset \mathcal{B}^+(\alpha)$ for $\beta_f(\alpha)$ such that 
	$\|u_n\|_q = 1$ for all $n \in \mathbb{N}$, 
	\begin{equation}\label{eq:lem:beta_f:1}
	\|\nabla u_n\|_q^q \to \lambda_1(q) 
	\quad 
	\text{and}
	\quad 
	\left(H_\alpha(u_n)\right)^\frac{q-1}{p-1} \langle f,u_n\rangle^\frac{p-q}{p-1} \to 0
	\end{equation}
	as $n \to +\infty$. 
	Passing to a subsequence, we see that $u_n \to \varphi_q$ strongly in $W_0^{1,q}$ and $L^q(\Omega)$, where we assumed $\|\varphi_q\|_q=1$. 
	Therefore,
	$$
	\lim\limits_{n\to+\infty}\langle f,u_n\rangle = \langle f,\varphi_q\rangle > 0,
	$$
	and hence the second convergence in \eqref{eq:lem:beta_f:1} implies 
	$\lim\limits_{n\to+\infty} H_\alpha(u_n)=0$. 
	Let us show that $\{u_n\}$ is bounded in $\W$. Indeed, if we suppose, by contradiction, that $\|\nabla u_n\|_p \to +\infty$ as $n \to +\infty$, up to a subsequence, then $\lim\limits_{n\to+\infty} H_\alpha(u_n)=0$ implies $\|\nabla u_n\|_p^p\le (\alpha+1)\|u_n\|_p^p$ for all sufficiently large $n \in \mathbb{N}$. 
	Thus, according to \cite[Lemma 9]{T-2014}, there exists a constant 
	$C=C(\alpha)>0$ independent of $n$ such that 
	\begin{equation}
	\label{eq:un<C}
	\|\nabla u_n\|_p \leq C\|u_n\|_q=C,
	\end{equation}
	which is a contradiction, and hence $\{u_n\}$ is bounded in $\W$.
	Recalling that $u_n \to \varphi_q$ strongly in $W_0^{1,q}$, we conclude from \eqref{eq:un<C} that $u_n\to \varphi_q$ weakly in $W_0^{1,p}$, up to a subsequence, whence we get 
	\begin{equation}
	\label{eq:H<=0}
	H_\alpha(\varphi_q) \leq \liminf_{n \to +\infty} H_\alpha(u_n) = 0.
	\end{equation}
	However, by the definition \eqref{eq:alpha_*} of $\alpha_*$, \eqref{eq:H<=0} contradicts the assumption $\alpha < \alpha_*$.
	
	Assertion \ref{lem:prop:beta_f:3}. 
	Note that $\beta_f(\alpha_*) = \lambda_1(q)$ since $\varphi_q \in \mathcal{B}^+(\alpha_*)$. 
	Thus, let us assume $\alpha >\alpha_*$. Then $H_\alpha(\varphi_q) < 0$, and hence applying Lemma \ref{lem:weak} to $\varphi_q$, we can find a sequence $\{u_n\} \subset \W \setminus \{0\}$ such that $u_n \geq 0$ a.e.\ in $\Omega$ and $H_\alpha(u_n) = 0$ for any $n \in \mathbb{N}$, and 
	$\frac{\|\nabla u_n\|_q^q}{\|u_n\|_q^q} \to \lambda_1(q)$ as $n \to +\infty$. That is, $\{u_n\} \subset \mathcal{B}^+(\alpha)$, and the assertion follows.

	Before proving assertion \ref{lem:prop:beta_f:4}, let us establish assertion \ref{lem:prop:beta_f:5}. 
	We start with the monotonicity of $\beta_f$. Suppose, by contradiction, that there exist $\alpha_1$ and $\alpha_2$ such that $\alpha_1 < \alpha_2$ and $\beta_f(\alpha_1) < \beta_f(\alpha_2)$. That is, we can find $u \in \mathcal{B}^+(\alpha_1)$ such that $\Phi^+_{\alpha_1}(u) < \beta_f(\alpha_2)$. If $H_{\alpha_2}(u) \geq 0$, then $u \in \mathcal{B}^+(\alpha_2)$. Moreover, $\alpha_1 < \alpha_2$ implies $H_{\alpha_1}(u) > H_{\alpha_2}(u)$ and hence $\Phi_{\alpha_1}^+(u) \geq \Phi_{\alpha_2}^+(u)$, which contradicts the definition of $\beta_f(\alpha_2)$. Therefore, $H_{\alpha_2}(u) < 0$. Applying Lemma \ref{lem:weak}, we can find a sequence $\{u_n\} \subset \W$ such that $u_n \geq 0$ a.e.\ in $\Omega$, 
	\begin{equation}
	\label{eq:hphi}
	H_{\alpha_2}(u_n) = 0
	\quad \text{and} \quad
	\Phi^+_{\alpha_2}(u_n) = \frac{\|\nabla u_n\|_q^q}{\|u_n\|_q^q} \to \frac{\|\nabla u\|_q^q}{\|u\|_q^q} \leq \Phi^+_{\alpha_1}(u)
	\quad \text{as }
	n \to +\infty.
	\end{equation}
	We see that $\{u_n\} \subset \mathcal{B}^+(\alpha_2)$, and hence \eqref{eq:hphi} leads to a contradiction since $\Phi^+_{\alpha_1}(u) < \beta_f(\alpha_2)$.
	
	Now we prove the left-continuity of $\beta_f$.
	Let us fix some $\alpha \in \mathbb{R}$ and consider a sequence $\{\alpha_n\}$ such that 
	$\alpha_n < \alpha$ for all $n \in \mathbb{N}$ and $\lim\limits_{n\to+\infty}\alpha_n=\alpha$. 
	Since we know that $\liminf\limits_{n\to+\infty}\beta_f(\alpha_n) \ge \beta_f(\alpha)$ by the monotonicity obtained above, let us show that $\limsup\limits_{n\to+\infty}\beta_f(\alpha_n)\le \beta_f(\alpha)$.
	Arguing by contradiction, we suppose that, up to a subsequence, 
	$$
	\delta:=\lim_{n\to+\infty} \beta_f(\alpha_n)>\beta_f(\alpha). 
	$$
	Thus, we can find $u \in \mathcal{B}^+(\alpha)$	satisfying $\Phi_{\alpha}^+(u)<\delta$. 
	Since each $\alpha_n < \alpha$, we have $u \in \mathcal{B}^+(\alpha_n)$. Therefore, recalling that $\alpha_n \to \alpha$ as $n \to +\infty$, we get a contradiction by 
	$$
	\delta=\lim_{n\to+\infty}\beta_f(\alpha_n)\le \lim_{n\to+\infty}\Phi_{\alpha_n}^+(u)=\Phi_{\alpha}^+(u)<\delta. 
	$$	
		
	Assertion \ref{lem:prop:beta_f:4} easily follows from assertion \ref{lem:prop:beta_f:5} by noting that $\beta_f(\lambda_1(p)) \le \beta_*$ in view of $\varphi_p \in \mathcal{B}^+(\lambda_1(p))$.
	
	Assertion \ref{lem:prop:beta_f:9}.	
	Taking a nonnegative function $u \in C_0^\infty(\Omega) \setminus \{0\}$ such that $\text{supp}\, u \subset \widetilde{\Omega}$, we see that 
	$H_\alpha(u) \geq 0$ for all $\alpha \leq \lambda_1(p)$ and 
	$\langle f,u\rangle=0$, which yields the desired bound: 
	$$
	\beta_f(\alpha) \leq \Phi_\alpha^+(u)=\frac{\|\nabla u\|_q^q}{\|u\|_q^q}
	\quad \text{for any } 
	\alpha \leq \lambda_1(p).
	$$

	Assertion \ref{lem:prop:beta_f:7}. Let $\alpha < \lambda_1(p)$ and let
	$\{\alpha_n\}$ be an arbitrary sequence convergent to $\alpha$. 
	Since we already know that $\beta_f$ is nonincreasing and left-continuous  
	by assertion \ref{lem:prop:beta_f:5}, 
	it is sufficient to assume that $\alpha < \alpha_n < \lambda_1(p)$ and to show that 
	$\liminf\limits_{n\to+\infty}\beta_f(\alpha_n) \ge \beta_f(\alpha)$. 
	Suppose, by contradiction, that, up to a subsequence, $\lim\limits_{n\to+\infty}\beta_f(\alpha_n) < \beta_f(\alpha)$. Thus, for any sufficiently large $n \in \mathbb{N}$ we can choose $u_n \in \mathcal{B}^+(\alpha_n)$ such that 
	\begin{equation}
	\label{eq:b<b<b}
	\beta_f(\alpha_n) \leq \Phi_{\alpha_n}^+(u_n) 
\le \sup_{m}\Phi_{\alpha_m}^+(u_m) < \beta_f(\alpha), 
	\end{equation}
	and we may assume $\|u_n\|_q=1$ for all $n$. The latter inequality in \eqref{eq:b<b<b} implies the existence of $C>0$ such that
	\begin{equation}
	\label{eq:hhf<c}
	\|\nabla u_n\|_q^q \leq C
	\quad \text{and} \quad 
	\left(H_{\alpha_n}(u_n)\right)^\frac{q-1}{p-1}\langle f,u_n\rangle^\frac{p-q}{p-1} \leq C 
	\end{equation}
	for all $n$. 
	In view of the first bound in \eqref{eq:hhf<c} and the choice $\|u_n\|_q=1$, we see that $\{u_n\}$ converges to some nonnegative function $u_0 \in W_0^{1,q} \setminus \{0\}$ weakly in $W_0^{1,q}$ and strongly in $L^q(\Omega)$, up to a subsequence. 
	By our assumption, we get 
	$\lim\limits_{n\to+\infty}\langle f,u_n\rangle= \langle f,u_0\rangle>0$, which implies the uniform boundedness of $H_{\alpha_n}(u_n)$ for all $n$. 
	Hence, by the same argument as in assertion \ref{lem:prop:beta_f:2} (see \eqref{eq:un<C}), we conclude that $\{u_n\}$ is bounded in $\W$, and thereby it converges to $u_0$ weakly in $\W$, up to a subsequence. Moreover, $H_\alpha(u_0) \geq 0$ by $\alpha\le \lambda_1(p)$.
	Thus, we see that $u_0 \in \mathcal{B}^+(\alpha)$ and we deduce from \eqref{eq:b<b<b} that
	$$
	\Phi_{\alpha}^+(u_0) \leq \liminf_{n\to+\infty}\Phi_{\alpha_n}^+(u_n) < 
	\beta_f(\alpha),
	$$
	which contradicts the definition of $\beta_f(\alpha)$. 
	
	Assertion \ref{lem:prop:beta_f:8}. Suppose, by contradiction, that 
	$C := \sup\{\beta_f(\alpha): \alpha\le \lambda_1(p)\} < +\infty$. 
	The monotonicity of $\beta_f$ (see assertion \ref{lem:prop:beta_f:5}) implies the existence of a sequence $\{\alpha_n\}$ such that $\alpha_n \to -\infty$ and  $\beta_f(\alpha_n) \to C$ as $n \to +\infty$. 
	Therefore, by the definition of $\beta_f(\alpha_n)$, we can find a sequence $\{u_n\} \subset \mathcal{B}^+(\alpha_n)$ such that $\|u_n\|_q = 1$ for each $n \in \mathbb{N}$ and 
	$\Phi_{\alpha_n}^+(u_n)  \leq C+1$. 
	Arguing as in assertion \ref{lem:prop:beta_f:7} (see \eqref{eq:hhf<c}), we obtain that $u_n \to u_0 \not \equiv 0$ weakly in $W_0^{1,q}$ and strongly in $L^q(\Omega)$, up to a subsequence. Moreover, $H_{\alpha_n}(u_n)$ is uniformly bounded for all $n$. Since $\alpha_n \to -\infty$ as $n \to +\infty$, we conclude that $\{u_n\}$ is bounded in $\W$ and $\|u_n\|_p \to 0$. However, this contradicts $u_0 \not\equiv 0$.  
\end{proof}

The following result is crucial for the proof of Theorem \ref{thm:nodal} and Proposition \ref{prop:aux5} \ref{prop:aux5:2}.
\begin{proposition}\label{prop:aux}
	Let $f \in W^{-1,p'}(\Omega)$ and $f \geq 0$ in the weak sense. 
	Assume that $\alpha \in \mathbb{R}$ and $\beta < \beta_f(\alpha)$.
	If $u \in \W$ is such that $u \geq 0$ a.e.\ in $\Omega$ and $H_\alpha(u) > 0$, then 
	\begin{equation}\label{eq:aux}
	H_\alpha(u) + G_\beta(u) + \langle f,u\rangle > 0.
	\end{equation}
\end{proposition}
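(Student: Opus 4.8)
The plan is to extract the claimed inequality almost directly from the variational definition of $\beta_f(\alpha)$ and then to close the estimate by a single application of Young's inequality. First I would note that $H_\alpha(u) > 0$ forces $u \not\equiv 0$ and $H_\alpha(u) \geq 0$, so that $u \in \mathcal{B}^+(\alpha)$; hence, since $\beta < \beta_f(\alpha)$, the definition \eqref{beta_f} gives $\Phi_\alpha^+(u) \geq \beta_f(\alpha) > \beta$. Multiplying this strict inequality by $\|u\|_q^q > 0$ and rearranging by means of the formula \eqref{def:Phi} for $\Phi_\alpha^+$, I would obtain
$$
G_\beta(u) > -\frac{p-1}{p-q}\left(\frac{p-q}{q-1}\right)^{\frac{q-1}{p-1}}\left(H_\alpha(u)\right)^{\frac{q-1}{p-1}}\langle f,u\rangle^{\frac{p-q}{p-1}} =: -K,
$$
where $K \geq 0$ because $H_\alpha(u) \geq 0$ and $\langle f,u\rangle \geq 0$, the latter following from the weak nonnegativity of $f$ together with $u \geq 0$ a.e.\ in $\Omega$.

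It then remains to prove the elementary bound $K \leq H_\alpha(u) + \langle f,u\rangle$. Indeed, combining it with the displayed strict inequality yields $H_\alpha(u) + G_\beta(u) + \langle f,u\rangle > H_\alpha(u) - K + \langle f,u\rangle \geq 0$, which is precisely \eqref{eq:aux}. To get $K \leq H_\alpha(u) + \langle f,u\rangle$ I would invoke Young's inequality $ab \leq \frac{a^s}{s} + \frac{b^t}{t}$ with the conjugate exponents $s = \frac{p-1}{q-1}$ and $t = \frac{p-1}{p-q}$ (note $\frac1s + \frac1t = 1$, and $s,t>1$ since $p>q>1$), evaluated at $a = s^{1/s}\big(H_\alpha(u)\big)^{1/s}$ and $b = t^{1/t}\langle f,u\rangle^{1/t}$: with this choice the right-hand side collapses to $H_\alpha(u) + \langle f,u\rangle$, while a short computation shows $s^{1/s}t^{1/t} = \frac{p-1}{p-q}\big(\frac{p-q}{q-1}\big)^{\frac{q-1}{p-1}}$, so that $ab = K$.

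I do not expect a genuine obstacle: the multiplicative constant built into $\Phi_\alpha^+$ is exactly the optimal constant in Young's inequality for the splitting $\frac{q-1}{p-1} + \frac{p-q}{p-1} = 1$, so the two estimates match with no slack. The only points needing a line of care are that $\|u\|_q > 0$ — which holds since $u \not\equiv 0$ in $\W$ and $\Omega$ is bounded, making the multiplication by $\|u\|_q^q$ legitimate and strictness-preserving — and that $\langle f,u\rangle \geq 0$, obtained by density from $f \geq 0$ in the weak sense. (Alternatively, one may avoid the explicit constant and deduce $K \le H_\alpha(u)+\langle f,u\rangle$ from the weighted AM--GM inequality $x^\lambda y^{1-\lambda}\le\lambda x+(1-\lambda)y$ after rescaling $x\mapsto x/\lambda$, $y\mapsto y/(1-\lambda)$ with $\lambda=\frac{q-1}{p-1}$.)
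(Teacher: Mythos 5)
Your proof is correct, and it takes a genuinely different --- and more direct --- route than the paper's. The paper argues by contradiction via a fibering analysis: assuming $H_\alpha(u)+G_\beta(u)+\langle f,u\rangle\le 0$, it studies the fiber map $Q_b(t)=t^pH_\alpha(u)+t^qG_b(u)+t\langle f,u\rangle$, lowers $\beta$ to a value $\tilde\beta\le\beta$ at which $Q_{\tilde\beta}$ has a degenerate zero, solves the system $Q_{\tilde\beta}(t)=Q_{\tilde\beta}'(t)=0$, and arrives at $\tilde\beta=\Phi_\alpha^+(u)$, contradicting $\tilde\beta\le\beta<\beta_f(\alpha)\le\Phi_\alpha^+(u)$. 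You instead read $\Phi_\alpha^+(u)>\beta$ directly off the definition of $\beta_f(\alpha)$ as an infimum over $\mathcal{B}^+(\alpha)\ni u$, and close the estimate with the sharp Young inequality; your identification of the constant $\frac{p-1}{p-q}\bigl(\frac{p-q}{q-1}\bigr)^{\frac{q-1}{p-1}}$ with $s^{1/s}t^{1/t}$ for the conjugate pair $s=\frac{p-1}{q-1}$, $t=\frac{p-1}{p-q}$ checks out, as do the side conditions ($u\in\mathcal{B}^+(\alpha)$ since $H_\alpha(u)>0$ forces $u\not\equiv 0$; $\|u\|_q>0$; $\langle f,u\rangle\ge 0$). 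The two arguments are in fact dual: applying your Young inequality after the rescaling $u\mapsto tu$ yields $t^qK\le t^pH_\alpha(u)+t\langle f,u\rangle$ for all $t>0$, i.e.\ $Q_{\Phi_\alpha^+(u)}(t)\ge 0$ everywhere, which is exactly the extremality property the paper extracts from the degenerate critical point. What your version buys is brevity and the elimination of the auxiliary continuity argument for $b\mapsto\min_tQ_b(t)$ and the existence of $\tilde\beta$; what the paper's version buys is an explanation of where the otherwise unmotivated constant in $\Phi_\alpha^+$ comes from.
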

\begin{proof}
	If $u \in \W$ is such that $u \geq 0$ a.e.\ in $\Omega$ and $H_\alpha(u) > 0$, then $\langle f,u\rangle\geq 0$. 
	Evidently, if $G_\beta(u) \geq 0$, then \eqref{eq:aux} is satisfied. Thus, let us assume that $G_\beta(u) < 0$. Note that $u \in \mathcal{B}^+(\alpha)$, i.e., $u$ is admissible for the minimization problem \eqref{beta_f} of $\beta_f(\alpha)$. Consequently, if $\langle f,u\rangle = 0$, then $\beta < \beta_f(\alpha)$ implies $G_\beta(u) > 0$, which is impossible. Therefore, 
$\langle f,u\rangle > 0$.
	
	Consider the function
	$$
	Q_\beta(t) := t^p H_\alpha(u) + t^q G_\beta(u) + t \langle f,u\rangle, 
	\quad
	t \geq 0.
	$$
	Since $p>q>1$, we see that $t^p H_\alpha(u) > 0$ is the leading term at $t \to +\infty$,  $t \langle f,u\rangle > 0$ is the leading term at $t \to 0$, and $t^q G_\beta(u) < 0$ has an impact in a middle range of $t$, see Figure \ref{fig1}.
	
	If $Q_\beta(1) > 0$, then \eqref{eq:aux} is satisfied. Thus, let us suppose that $Q_\beta(1) \leq 0$. 
	In view of the behavior of $Q_\beta(t)$, we can find $t_0, t_1>0$ such that $t_0<1<t_1$, $Q_\beta(t_0) > 0$, and $Q_\beta(t_1) > 0$. 
	Let us define the value
	$$
	q(b) := \min_{t_0 \leq t \leq t_1} Q_b(t),
	$$
	and let $t_b \in (t_0, t_1)$ be a corresponding minimizer. We see that $q(\beta) = Q_{\beta}(t_{\beta}) \leq 0$ and $Q_{\beta}'(t_{\beta}) = 0$. 
	Note that $Q_b(t)$ is strictly decreasing with respect to $b$ for any fixed $t>0$. Moreover, if $\hat{\beta} < \beta$ is such that $G_{\hat{\beta}}(u) \geq 0$, then $Q_{\hat{\beta}}(t) \geq q(\hat{\beta}) \geq c > 0$ for some constant $c$ and all $t \geq t_0$. Therefore, noting that $q(\cdot)$ is continuous, we obtain the existence of $\tilde{\beta} \in (\hat{\beta}, \beta]$ such that
	\begin{equation}\label{eq:qq=0}
	Q_{\tilde{\beta}}(t_{\tilde{\beta}}) = 0 \quad \text{and} \quad Q_{\tilde{\beta}}'(t_{\tilde{\beta}}) = 0.
	\end{equation}

	\begin{figure}[!h]
		\begin{center}
			\includegraphics[width=0.5\linewidth]{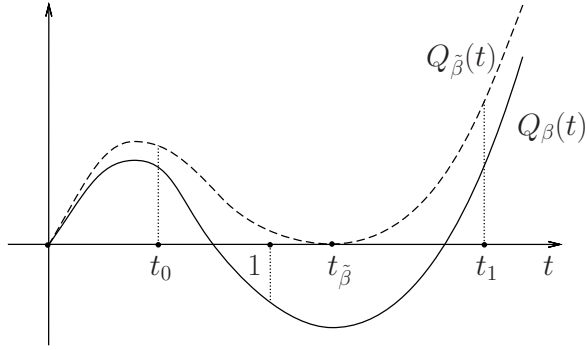}
			\caption{Schematic behavior of $Q_\beta(t)$ and $Q_{\tilde{\beta}}(t)$.}
			\label{fig1}
		\end{center}
	\end{figure}

	Let us denote for simplicity $t = t_{\tilde{\beta}}$, and rewrite \eqref{eq:qq=0} as 
	$$
	\left\{
	\begin{aligned}
	&t^p H_\alpha(u) + t^q G_{\tilde{\beta}}(u) + t \langle f,u\rangle = 0,\\
	&p t^p H_\alpha(u) + qt^q G_{\tilde{\beta}}(u) + t \langle f,u\rangle = 0.
	\end{aligned}
	\right.
	$$
	Solving this system with respect to $G_{\tilde{\beta}}(u)$ and $t$, we obtain
	$$
	t = \left(\frac{q-1}{p-q}\right)^\frac{1}{p-1}\left(\frac{\langle f,u\rangle}{H_\alpha(u)}\right)^\frac{1}{p-1}
	$$
	and
	\begin{equation}\label{eq:beta_f:contr2}
	G_{\tilde{\beta}}(u) =  -\frac{p-1}{p-q}\left(\frac{p-q}{q-1}\right)^\frac{q-1}{p-1}\left(H_\alpha(u)\right)^\frac{q-1}{p-1}\langle f,u\rangle^\frac{p-q}{p-1}.
	\end{equation}
	Expressing now $\tilde{\beta}$ from $G_{\tilde{\beta}}(u)$, we get
	\begin{equation}\label{eq:beta_f:contr1}
	\tilde{\beta} =
	\frac{\|\nabla u\|_q^q}{\|u\|_q^q} + \frac{p-1}{p-q}\left(\frac{p-q}{q-1}\right)^\frac{q-1}{p-1}\frac{\left(H_\alpha(u)\right)^\frac{q-1}{p-1}\langle f,u\rangle^\frac{p-q}{p-1}}{\|u\|_q^q}.
	\end{equation}
	However, this is impossible since $\tilde{\beta} \leq \beta < \beta_f(\alpha)$, and $u$ is an admissible function for the minimization formulation of $\beta_f(\alpha)$. 
\end{proof}

\medskip
\begin{proof}[Proof of Theorem \ref{thm:nodal}]
	Suppose that there is a solution $u$ of \eqref{eq:D} with $\alpha \leq \lambda_1(p)$ and $\beta < \beta_f(\alpha)$
	such that $u = u^+ + u^-$ and $u^- \not\equiv 0$. 
	Then, $-u^- \geq 0$ and we have
	\begin{equation}\label{eq:HG=0}
	\left<E_{\alpha,\beta}'(u),u^-\right> =  H_\alpha(-u^-) + G_\beta(-u^-) + \langle f,-u^-\rangle = 0.
	\end{equation}
	Thus, if $\alpha<\lambda_1(p)$, then $H_\alpha(-u^-) > 0$, and hence we get a contradiction to Proposition \ref{prop:aux}. 
	In the case $\alpha=\lambda_1(p)$ it is enough to show that $u^- \not \in \mathbb{R}\varphi_p$.
	Suppose, by contradiction and without loss of generality, that $-u^- = \varphi_p$. 
	Then we get
	$$
	\left<E_{\alpha,\beta}'(u),u^-\right> = 
	G_\beta(\varphi_p) + \langle f, \varphi_p \rangle = 0,
	$$
	and hence $G_\beta(\varphi_p) \le 0$.
	On the other hand, since $\beta<\beta_f(\lambda_1(p)) \leq \beta_*$ by Lemma \ref{lem:prop:beta_f}  \ref{lem:prop:beta_f:4}, we have $G_\beta(\varphi_p) > 0$, which leads to a contradiction.
	Therefore, any solution of \eqref{eq:D} is nonnegative. 
	Moreover, if $f\in L^\infty(\Omega) \setminus \{0\}$ and $f \geq 0$ a.e.\ in $\Omega$, then we have $u\in \text{int}\, C_0^1(\overline{\Omega})_+$ (see Remark \ref{ref:sign}), that is, $u$ is positive.
\end{proof}

\medskip
\begin{proof}[Proof of Proposition \ref{prop:nodal2}]
	Let us fix some $\beta < \beta_f(\lambda_1(p))$ and consider an arbitrary sequence $\{\alpha_n\}$ such that each $\alpha_n > \lambda_1(p)$ and $\alpha_n \to \lambda_1(p)$ as $n \to +\infty$. Let $u_n$ be a solution of \eqref{eq:D} with $\alpha=\alpha_n$. 
	Assume first that $\{u_n\}$ is bounded in $\W$. Then, we deduce from Remark \ref{rem:reg} and the Arzel\`a-Ascoli theorem that $u_n \to u$ in $C^1(\overline{\Omega})$, up to a subsequence, where $u$ is a solution of \eqref{eq:D}. Noting that $u \in \text{int}\, C_0^1(\overline{\Omega})_+$ by Theorem \ref{thm:nodal} and Remark \ref{ref:sign}, we conclude that $u_n \in \text{int}\, C_0^1(\overline{\Omega})_+$ for all sufficiently large $n \in \mathbb{N}$. 
	
	Assume now that $\{u_n\}$ is unbounded in $\W$. Considering the normalized sequence $v_n := u_n/\|\nabla u_n\|_p$, we can argue as in \cite[Lemma 3.3]{BobkovTanaka2016} to obtain that $\{v_n\}$ converges strongly in $\W$ to $\varphi_p$ or $-\varphi_p$, up to a subsequence. Applying again Remark \ref{rem:reg} and the Arzel\`a-Ascoli theorem, we deduce that either $v_n \to \varphi_p$ or $v_n \to -\varphi_p$ in $C^1(\overline{\Omega})$, up to a subsequence. Since $\varphi_p \in \text{int}\, C_0^1(\overline{\Omega})_+$ (see Remark \ref{ref:sign}), we get either $v_n \in \text{int}\, C_0^1(\overline{\Omega})_+$ or $-v_n \in \text{int}\, C_0^1(\overline{\Omega})_+$ for all sufficiently large $n \in \mathbb{N}$.
	
	Finally, recalling that the sequence $\{\alpha_n\}$ has been chosen in an arbitrary way, we finish the proof.
\end{proof}

\medskip
\begin{proof}[Proof of Proposition \ref{thm:ATM}]
	Let $\alpha \geq \lambda_1(p)$ and $\beta > \beta_{ps}(\alpha)$.
	Suppose, by contradiction, that \eqref{eq:D} has a nonnegative solution $u$ for some $f\in L^\infty(\Omega)$ satisfying $f\ge0$ and $f\not\equiv0$. 
	Then $u\in \text{int}\, C_0^1(\overline{\Omega})_+$ (see Remark \ref{ref:sign}), and hence $u$ is a super-solution of \eqref{eq:D0}.
	Noting that $\beta>\beta_{ps}(\alpha)$ implies $\beta>\lambda_1(q)$ (see \cite[Proposition 3 (ii)]{BobkovTanaka2015}) and applying the sub-supersolution method \protect{\cite[Lemma 6]{BobkovTanaka2015}}, we obtain a positive solution of \eqref{eq:D0}. However, it contradicts \cite[Theorem 2.2]{BobkovTanaka2015} which asserts that \eqref{eq:D0} has no positive solutions.
\end{proof}

\medskip
\begin{proof}[Proof of Proposition \ref{thm:ATM2}]
Suppose, by contradiction, that there exists $\alpha > \alpha_*$ such that for any $n \in \mathbb{N}$ there exists $\beta_n > \lambda_1(q) - \frac{1}{n}$ and a nonnegative solution $u_n$ of \eqref{eq:D} with $\beta=\beta_n$.
Since $u_n \in \text{int}\, C_0^1(\overline{\Omega})_+$ for each $n \in \mathbb{N}$ (see Remark \ref{ref:sign}), 
we can choose $\varphi_q^p/u_n^{p-1}$ as a test function for \eqref{eq:D}. 
Then, we obtain 
\begin{align*}
\int_\Omega |\nabla u_n|^{p-2}  \nabla u_n \nabla \left( \frac{\varphi_q^p}{u_n^{p-1}} \right)\, dx
&+
\int_\Omega |\nabla u_n|^{q-2}  \nabla u_n \nabla \left( \frac{\varphi_q^p}{u_n^{p-1}} \right)\, dx
\\ 
&=
\alpha \int_\Omega \varphi_q^p \, dx 
+
\beta_n \int_\Omega \varphi_q^p u_n^{q-p} \, dx
+\intO f \frac{\varphi_q^p}{u_n^{p-1}}\,dx.
\end{align*}
The classical Picone identity (see \cite[Theorem 1.1]{Alleg}) implies
\begin{equation*}
\label{eq:equiv:pic1}
\int_\Omega |\nabla u_n|^{p-2}  \nabla u_n \nabla \left( \frac{\varphi_q^p}{u_n^{p-1}} \right)\, dx \leq 
\int_\Omega |\nabla \varphi_q|^p \, dx = 
\alpha_* \int_\Omega \varphi_q^p \, dx
\end{equation*}
and the generalized Picone's identity from \cite[Lemma~1]{ilyas} yields
\begin{align*}
\label{eq:equiv:pic2}
\int_\Omega |\nabla u_n|^{q-2}  \nabla u_n \nabla \left( \frac{\varphi_q^p}{u_n^{p-1}} \right)\, dx &\leq
\int_\Omega |\nabla \varphi_q|^{q-2} \nabla \varphi_q \nabla \left( \frac{\varphi_q^{p-q+1}}{u_n^{p-q}} \right) \, dx 
= \lambda_1(q) \int_\Omega \varphi_q^{p} u_n^{q-p} \, dx.
\end{align*} 
Consequently, we get 
\begin{equation}\label{eq:ATM-2-1} 
(\alpha-\alpha_*)\|\varphi_q\|_p^p
+(\beta_n-\lambda_1(q))\,\int_\Omega \varphi_q^p u_n^{q-p} \, dx +\intO f \frac{\varphi_q^p}{u_n^{p-1}}\,dx \leq 0
\end{equation} 
for each $n \in \mathbb{N}$. 
Note that the first and third terms in \eqref{eq:ATM-2-1} are positive, and the first term does not depend on $n$. 
Thus, if $\beta_n \geq \lambda_1(q)$ for some $n \in \mathbb{N}$, we get a contradiction. Therefore, we may assume that $\beta_n \uparrow \lambda_1(q)$, and hence \eqref{eq:ATM-2-1} implies 
\begin{equation}
\label{eq:inttoinfty}
\int_\Omega \varphi_q^p u_n^{q-p} \, dx \to +\infty
\quad\text{as }
n \to +\infty.
\end{equation}
Let us show that this is impossible. 
Since $\alpha_* > \lambda_1(p)$ and each $u_n > 0$, we can argue as in \cite[Lemma 3.3]{BobkovTanaka2016} to prove that $\{\|\nabla u_n\|_p\}$ is bounded. Hence, by Remark \ref{rem:reg} and the Arzel\`a-Ascoli theorem we obtain that $u_n \to u$ in $C^1(\overline{\Omega})$, up to a subsequence, where $u \in \text{int}\, C_0^1(\overline{\Omega})_+$ is a solution of \eqref{eq:D}.
Since $\varphi_q, u$, and each $u_n$ satisfy the boundary point lemma (see, e.g., \cite[Theorem 5.5.1]{puser}), the $C^1(\overline{\Omega})$-convergence implies that we can find $c_1, c_2 > 0$ such that 
$$
c_1\, \text{dist}(x,\partial \Omega) < \varphi_q(x), u(x), u_n(x) < c_2\, \text{dist}(x,\partial \Omega)
$$
for any $x \in \Omega$ and all sufficiently large $n \in \mathbb{N}$.
Thus, we get
$$
\int_{\Omega} \varphi_q^p u_n^{q-p} \, dx \leq 
c_2^p c_1^{q-p} \int_{\Omega} (\text{dist}(x,\partial \Omega))^q \, dx < +\infty,
$$
which contradicts \eqref{eq:inttoinfty}.
\end{proof}

Let us turn to the proof of Proposition \ref{prop:aux5}.
We start by showing some basic properties of the critical value $\beta^f(\alpha)$ defined by \eqref{beta^f}. Recall that the functional $\Phi_\alpha^-$ and the set $\mathcal{B}^-(\alpha)$ are defined by \eqref{def:Phi^-} and \eqref{def:B^-}, respectively.
\begin{lemma}\label{lem:prop:beta^f}
	Let $f \in W^{-1,p'}(\Omega) \setminus \{0\}$ and $f \geq 0$ in the weak sense. 
	Then the following assertions are satisfied:
	\begin{enumerate}[label={\rm(\roman*)}]
		\item\label{lem:prop:beta^f:1} 
		$\Phi_\alpha^-$ is 0-homogeneous and $\mathcal{B}^-(\alpha)\not=\emptyset$ for any $\alpha\ge \lambda_1(p)$;
		\item\label{lem:prop:beta^f:2} $\beta^f(\alpha) < +\infty$ for any $\alpha\ge \lambda_1(p)$;
		\item\label{lem:prop:beta^f:3} $\beta^f$ is nondecreasing in $[\lambda_1(p), +\infty)$; 
		\item\label{lem:prop:beta^f:4} $\beta^f(\lambda_1(p)) = \beta_*$ and $\beta^f(\alpha) \to +\infty$ as $\alpha \to +\infty$.
	\end{enumerate}
\end{lemma}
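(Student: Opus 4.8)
The plan is to dispatch the four assertions of Lemma~\ref{lem:prop:beta^f} in order, the organizing remark being that on $\mathcal{B}^-(\alpha)$ both $-H_\alpha(u)\ge 0$ and $\langle f,u\rangle\ge 0$, so the subtracted term in $\Phi_\alpha^-$ (see \eqref{def:Phi^-}) is nonnegative and, crucially, vanishes exactly when $H_\alpha(u)=0$. With this in mind, assertion~\ref{lem:prop:beta^f:1} is routine: $0$-homogeneity follows from a scaling check, since under $u\mapsto tu$ the first quotient is invariant while the subtracted term has numerator scaling like $t^{p\frac{q-1}{p-1}+\frac{p-q}{p-1}}=t^q$, matching $\|tu\|_q^q=t^q\|u\|_q^q$; and $\varphi_p\in\mathcal{B}^-(\alpha)$ because $H_\alpha(\varphi_p)=(\lambda_1(p)-\alpha)\|\varphi_p\|_p^p\le 0$ for $\alpha\ge\lambda_1(p)$.

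For assertion~\ref{lem:prop:beta^f:2} I would first use the organizing remark to get $\Phi_\alpha^-(u)\le\|\nabla u\|_q^q/\|u\|_q^q$ for every $u\in\mathcal{B}^-(\alpha)$, so that it suffices to bound this ratio uniformly. From $H_\alpha(u)\le 0$ one has $\|\nabla u\|_p^p\le\alpha\|u\|_p^p$, so the embedding estimate \cite[Lemma~9]{T-2014} — invoked exactly as in the proof of Proposition~\ref{lem:prop:beta_f}\,\ref{lem:prop:beta_f:2} — yields $\|\nabla u\|_p\le C(\alpha)\|u\|_q$, and then H\"older's inequality on the bounded domain $\Omega$ bounds $\|\nabla u\|_q/\|u\|_q$ by a constant depending only on $\alpha$ and $|\Omega|$. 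Hence $\beta^f(\alpha)<+\infty$. Since \cite[Lemma~9]{T-2014} is the only genuinely external ingredient in the whole lemma, I would regard this step as the crux. (When $\alpha=\lambda_1(p)$ finiteness also follows from assertion~\ref{lem:prop:beta^f:4}.)

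For the monotonicity in assertion~\ref{lem:prop:beta^f:3}, fix $\lambda_1(p)\le\alpha_1<\alpha_2$ and $u\in\mathcal{B}^-(\alpha_1)$. Since $u\ne 0$, one has $H_{\alpha_2}(u)=H_{\alpha_1}(u)-(\alpha_2-\alpha_1)\|u\|_p^p<0$, so Lemma~\ref{lem:weak} (applicable because $\alpha_2>\lambda_1(p)$) provides nonnegative $u_n$ with $H_{\alpha_2}(u_n)=0$ and $u_n\to u$ strongly in $W_0^{1,q}$. Then $u_n\in\mathcal{B}^-(\alpha_2)$, the subtracted term of $\Phi_{\alpha_2}^-(u_n)$ vanishes, and $\Phi_{\alpha_2}^-(u_n)=\|\nabla u_n\|_q^q/\|u_n\|_q^q\to\|\nabla u\|_q^q/\|u\|_q^q\ge\Phi_{\alpha_1}^-(u)$; taking the supremum over $u\in\mathcal{B}^-(\alpha_1)$ gives $\beta^f(\alpha_2)\ge\beta^f(\alpha_1)$. (Note that for a fixed function the subtracted term only grows as $\alpha$ increases, so $\Phi_\alpha^-(u)$ decreases in $\alpha$; the monotonicity of $\beta^f$ therefore genuinely relies on replacing $u$ by the sequence from Lemma~\ref{lem:weak}, for which that term is zero.)

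For assertion~\ref{lem:prop:beta^f:4}: by \eqref{eq:lambdar} one has $H_{\lambda_1(p)}(u)\ge 0$ for all $u\in\W\setminus\{0\}$, so $u\in\mathcal{B}^-(\lambda_1(p))$ forces $H_{\lambda_1(p)}(u)=0$ and hence, by simplicity of $\lambda_1(p)$ together with $u\ge 0$, $u\in\mathbb{R}_{>0}\varphi_p$; on such $u$ the subtracted term is $0$ and $\Phi_{\lambda_1(p)}^-(u)=\|\nabla\varphi_p\|_q^q/\|\varphi_p\|_q^q=\beta_*$, so $\beta^f(\lambda_1(p))=\beta_*$. For the divergence as $\alpha\to+\infty$ I would use rescaled bumps: fix a ball $B_\rho(x_0)\subset\Omega$ and $\psi\in C_0^\infty(B_1)\setminus\{0\}$ with $\psi\ge 0$, set $\psi_\delta(x):=\psi((x-x_0)/\delta)$, and note $\|\nabla\psi_\delta\|_r^r/\|\psi_\delta\|_r^r=\delta^{-r}\|\nabla\psi\|_r^r/\|\psi\|_r^r$; choosing $\delta_\alpha$ so that this ratio equals $\alpha$ for $r=p$ (which lies in $(0,\rho]$ once $\alpha$ is large) forces $H_\alpha(\psi_{\delta_\alpha})=0$, so $\psi_{\delta_\alpha}\in\mathcal{B}^-(\alpha)$ with vanishing subtracted term, and hence $\beta^f(\alpha)\ge\Phi_\alpha^-(\psi_{\delta_\alpha})=\delta_\alpha^{-q}\|\nabla\psi\|_q^q/\|\psi\|_q^q$, which is a positive constant times $\alpha^{q/p}$ and therefore tends to $+\infty$.
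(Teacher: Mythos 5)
Your proof is correct and follows essentially the same route as the paper: the key bound for \ref{lem:prop:beta^f:2} via \cite[Lemma~9]{T-2014} and H\"older, the use of Lemma~\ref{lem:weak} to pass from $H_{\alpha_2}(u)<0$ to functions with $H_{\alpha_2}=0$ for the monotonicity in \ref{lem:prop:beta^f:3}, and rescaled compactly supported bumps for the divergence in \ref{lem:prop:beta^f:4} all match the paper's argument. The only differences are cosmetic (you argue \ref{lem:prop:beta^f:3} directly rather than by contradiction, parametrize the rescaling continuously instead of by $u(nx)$, and spell out the "trivial" identification $\beta^f(\lambda_1(p))=\beta_*$ in more detail).
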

\begin{proof}
	Assertion \ref{lem:prop:beta^f:1} is trivial.
	Let us prove assertion \ref{lem:prop:beta^f:2}. Note that 
	$$
	\mathcal{B}^-(\alpha) \subset 
	X(\alpha) 
	:= 
	\left\{u \in \W:~ \|\nabla u\|_p^p \leq \alpha \|u\|_p^p\right\}
	$$
	for all $\alpha \geq \lambda_1(p)$, and hence \cite[Lemma 9]{T-2014} implies the existence of a constant $C=C(\alpha)>0$ such that 
	$$
	\|\nabla u\|_p \leq C\|u\|_q 
	\quad \text{for any } u\in \mathcal{B}^-(\alpha).
	$$
	Applying H\"older's inequality, we get $\|\nabla u\|_q \leq |\Omega|^\frac{p-q}{pq} \|\nabla u\|_p$, which yields the desired boundedness:
	$$
	\beta^f(\alpha) \leq \Phi_\alpha^-(u) \leq \frac{\|\nabla u\|_q^q}{\|u\|_q^q} \le |\Omega|^\frac{p-q}{p} C^q < +\infty
	\quad \text{for any } u\in \mathcal{B}^-(\alpha).
	$$
	
	Assertion \ref{lem:prop:beta^f:3}. 
	We argue similarly to the proof of Proposition \ref{lem:prop:beta_f} \ref{lem:prop:beta_f:5}.
	Suppose, by contradiction, that there exist $\alpha_1, \alpha_2 \geq \lambda_1(p)$ such that $\alpha_1 < \alpha_2$ and $\beta^f(\alpha_1) > \beta^f(\alpha_2)$. That is, we can find $u \in \mathcal{B}^-(\alpha_1)$ such that $\Phi^-_{\alpha_1}(u) > \beta^f(\alpha_2)$. Since $H_{\alpha_1}(u) \leq 0$, we have $H_{\alpha_2}(u) < H_{\alpha_1}(u) \leq 0$. Applying Lemma \ref{lem:weak}, we can find a sequence $\{u_n\} \subset \W$ such that $u_n \geq 0$ a.e.\ in $\Omega$,
	\begin{equation}
	\label{eq:hphi2}
	H_{\alpha_2}(u_n) = 0
	\quad \text{and} \quad
	\Phi^-_{\alpha_2}(u_n) = \frac{\|\nabla u_n\|_q^q}{\|u_n\|_q^q} \to \frac{\|\nabla u\|_q^q}{\|u\|_q^q} \geq \Phi^-_{\alpha_1}(u)
	\quad \text{as }
	n \to +\infty.
	\end{equation}
	We see that $\{u_n\} \subset \mathcal{B}^+(\alpha_2)$, and hence \eqref{eq:hphi2} leads to a contradiction since $\Phi^-_{\alpha_1}(u) > \beta^f(\alpha_2)$. 
	
	Assertion \ref{lem:prop:beta^f:4}. 
	The equality $\beta^f(\lambda_1(p)) = \beta_*$ is trivial. 
	To show that $\beta^f(\alpha) \to +\infty$ as $\alpha \to +\infty$, let us assume, without loss of generality, that $0 \in \Omega$, and let us fix a ball $B \subset \Omega$ such that $0 \in B$. Consider any nonnegative $u \in C_0^\infty(\Omega)$ and $\alpha \geq \lambda_1(p)$ such that $\text{supp}\, u \subset B$ and $\|\nabla u\|_p^p = \alpha \|u\|_p^p$, i.e., $H_\alpha(u)=0$. Now we define a function $u_n$ by $u_n(x) = u(nx)$ for each $n \in \mathbb{N}$. Since $\text{supp}\, u \subset B$, we get $u_n \in C_0^\infty(\Omega)$. Moreover, it is not hard to obtain that 
	\begin{equation}
	\label{eq:unhn}
	\|\nabla u_n\|_p^p = \alpha n^p \|u_n\|_p^p
	\quad\text{and}\quad
	\Phi^-_{\alpha n^p}(u_n) = \frac{\|\nabla u_n\|_q^q}{\|u_n\|_q^q} = n^q \frac{\|\nabla u\|_q^q}{\|u\|_q^q}.
	\end{equation}
	Thus, we see from \eqref{eq:unhn} that $u_n \in \mathcal{B}^-(\alpha n^p)$ and $\beta^f(\alpha n^p) \geq \Phi^-_{\alpha n^p}(u_n) \to +\infty$ as $n \to +\infty$.
\end{proof}

The following result can be obtained in much the same way as Proposition \ref{prop:aux}. 
\begin{proposition}\label{prop:aux4}
	Let $f \in W^{-1,p'}(\Omega)$ and $f \geq 0$ in the weak sense. 
	Assume that $\alpha > \lambda_1(p)$ and $\beta > \beta^f(\alpha)$.
	If $u \in \W$ is such that $u \geq 0$ a.e.\ in $\Omega$ and $H_\alpha(u) < 0$, then 
	\begin{equation}\label{eq:aux4}
	H_\alpha(u) + G_\beta(u) - \langle f,u\rangle< 0.
	\end{equation}
\end{proposition}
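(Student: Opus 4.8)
The plan is to mirror the proof of Proposition~\ref{prop:aux}, reversing the relevant inequalities and replacing the minimization problem for $\beta_f(\alpha)$ by the maximization problem defining $\beta^f(\alpha)$. Let $u\in\W$ with $u\ge 0$ a.e.\ in $\Omega$ and $H_\alpha(u)<0$; in particular $u\not\equiv 0$. Since $f\ge0$ in the weak sense, $\langle f,u\rangle\ge0$. If $G_\beta(u)\le0$, then the three terms $H_\alpha(u)$, $G_\beta(u)$, $-\langle f,u\rangle$ are all nonpositive and the first is strictly negative, so \eqref{eq:aux4} is immediate. Hence we may assume $G_\beta(u)>0$. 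Then $u\in\mathcal{B}^-(\alpha)$, i.e.\ $u$ is admissible in the maximization formulation of $\beta^f(\alpha)$; moreover if $\langle f,u\rangle=0$ then $\beta>\beta^f(\alpha)\ge\Phi_\alpha^-(u)=\|\nabla u\|_q^q/\|u\|_q^q$ would contradict $G_\beta(u)>0$, so in fact $\langle f,u\rangle>0$.

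Next I would introduce
\[
\widetilde{Q}_\beta(t):=t^pH_\alpha(u)+t^qG_\beta(u)-t\langle f,u\rangle,\qquad t\ge0,
\]
and observe that $\widetilde{Q}_\beta(t)<0$ both for small $t>0$ (where $-t\langle f,u\rangle$ dominates) and for large $t$ (where $t^pH_\alpha(u)$ dominates), while $t^qG_\beta(u)>0$ may create a positive hump in a middle range of $t$ (compare Figure~\ref{fig1}). The claim \eqref{eq:aux4} is exactly $\widetilde{Q}_\beta(1)<0$, so suppose for contradiction that $\widetilde{Q}_\beta(1)\ge0$. Then there exist $t_0,t_1$ with $0<t_0<1<t_1$ and $\widetilde{Q}_\beta(t_0)<0$, $\widetilde{Q}_\beta(t_1)<0$.

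Now set $\widetilde{q}(b):=\max_{t_0\le t\le t_1}\widetilde{Q}_b(t)$, which is continuous and nonincreasing in $b$, since for each fixed $t>0$ the map $b\mapsto\widetilde{Q}_b(t)=t^pH_\alpha(u)+t^qG_b(u)-t\langle f,u\rangle$ is strictly decreasing. We have $\widetilde{q}(\beta)\ge\widetilde{Q}_\beta(1)\ge0$. On the other hand, choosing $\hat\beta\ge\|\nabla u\|_q^q/\|u\|_q^q$ (so $G_{\hat\beta}(u)\le0$; note $\hat\beta>\beta$ precisely because $G_\beta(u)>0$) gives $\widetilde{Q}_{\hat\beta}(t)<0$ for all $t>0$, hence $\widetilde{q}(\hat\beta)<0$. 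By continuity there is $\tilde\beta\in[\beta,\hat\beta)$ with $\widetilde{q}(\tilde\beta)=0$, attained at some maximizer $t_{\tilde\beta}\in(t_0,t_1)$; since $\tilde\beta\ge\beta$ we have $\widetilde{Q}_{\tilde\beta}(t_0),\widetilde{Q}_{\tilde\beta}(t_1)<0$, so the maximizer is interior and $\widetilde{Q}_{\tilde\beta}'(t_{\tilde\beta})=0$. Solving the system
\[
t^pH_\alpha(u)+t^qG_{\tilde\beta}(u)-t\langle f,u\rangle=0,\qquad pt^pH_\alpha(u)+qt^qG_{\tilde\beta}(u)-t\langle f,u\rangle=0
\]
for $t=t_{\tilde\beta}$ and $G_{\tilde\beta}(u)$, exactly as in the proof of Proposition~\ref{prop:aux} but now with $-H_\alpha(u)>0$ in place of $H_\alpha(u)$, yields
\[
t_{\tilde\beta}=\left(\frac{q-1}{p-q}\right)^{\frac{1}{p-1}}\left(\frac{\langle f,u\rangle}{-H_\alpha(u)}\right)^{\frac{1}{p-1}},\qquad
G_{\tilde\beta}(u)=\frac{p-1}{p-q}\left(\frac{p-q}{q-1}\right)^{\frac{q-1}{p-1}}\left(-H_\alpha(u)\right)^{\frac{q-1}{p-1}}\langle f,u\rangle^{\frac{p-q}{p-1}},
\]
and hence $\tilde\beta=\Phi_\alpha^-(u)$. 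Since $u\in\mathcal{B}^-(\alpha)$, this forces $\tilde\beta\le\beta^f(\alpha)$, contradicting $\tilde\beta\ge\beta>\beta^f(\alpha)$. Therefore $\widetilde{Q}_\beta(1)<0$, i.e.\ \eqref{eq:aux4} holds.

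As in Proposition~\ref{prop:aux}, the only delicate point is organizing the degenerate step correctly: here one must approach the critical parameter $\tilde\beta$ from \emph{above} $\beta$ rather than from below, which is possible exactly because $G_\beta(u)>0$ places the auxiliary value $\hat\beta$ (with $G_{\hat\beta}(u)\le0$) above $\beta$; and one must check that $t_{\tilde\beta}$ lies in the interior of $[t_0,t_1]$ so that the first-order condition is available. The algebra for the $2\times2$ system and the identification of $\tilde\beta$ with $\Phi_\alpha^-(u)$ are routine once the sign conventions are tracked.
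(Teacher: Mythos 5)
Your proof is correct and follows exactly the route the paper intends: the paper itself only remarks that Proposition~\ref{prop:aux4} ``can be obtained in much the same way as Proposition~\ref{prop:aux}'', and your argument is precisely that adaptation, with the signs, the direction of the perturbation of $\beta$ (from above, using $\hat\beta$ with $G_{\hat\beta}(u)\le 0$), and the interior-maximizer check all handled correctly, leading to $\tilde\beta=\Phi_\alpha^-(u)\le\beta^f(\alpha)<\beta\le\tilde\beta$.
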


\begin{proof}[Proof of Proposition \ref{prop:aux5}]
	\ref{prop:aux5:1} Let $\beta < \beta_f(\alpha)$ and $u^- \not\equiv 0$. Suppose, by contradiction, that $H_\alpha(u^-) \geq 0$. As in the proof of Theorem \ref{thm:nodal}, $-u^-$ satisfies the equality \eqref{eq:HG=0}. Therefore, if $H_\alpha(u^-) > 0$, then we get a contradiction to Proposition \ref{prop:aux} applied to $-u^-$. If $H_\alpha(u^-) = 0$, then $-u^- \in \mathcal{B}^+(\alpha)$, and hence $\beta < \beta_f(\alpha)$ implies $G_\beta(-u^-) > 0$, which contradicts \eqref{eq:HG=0}.
	
	\ref{prop:aux5:2} Let $\beta > \beta^f(\alpha)$ and $u^+ \not\equiv 0$. Suppose, by contradiction, that $H_\alpha(u^+) \leq 0$. Since $u$ is a solution of \eqref{eq:D}, we have
	\begin{equation}
	\label{eq:HG=02}
	\left<E_{\alpha,\beta}'(u),u^+\right> =  H_\alpha(u^+) + G_\beta(u^+) - \langle f,u^+\rangle = 0.
	\end{equation}
	If $H_\alpha(u^+) < 0$, then we get a contradiction to Proposition \ref{prop:aux4}. If $H_\alpha(u^+) = 0$, then $u^+ \in \mathcal{B}^-(\alpha)$, and hence $\beta > \beta_f(\alpha)$ implies $G_\beta(u^+) < 0$, which contradicts \eqref{eq:HG=02}. The proof is complete.
\end{proof}

\smallskip
\bigskip
\noindent
\textbf{Acknowledgments.}
V.~Bobkov was supported by the grant 18-03253S of the Grant Agency of the Czech Republic and by the project LO1506 of the Czech Ministry of Education, Youth and Sports.
M.~Tanaka was supported by JSPS KAKENHI Grant Number 15K17577.
The authors would like to thank the anonymous referee for valuable remarks and suggestions which helped to improve the manuscript.

\addcontentsline{toc}{section}{\refname}
\small

\end{document}